\theoremstyle{definition}
\newtheorem{theorem}{Theorem}[section]
\newtheorem{prop}[theorem]{Proposition}
\newtheorem{cor}[theorem]{Corollary}
\newtheorem{defn}[theorem]{Definition}
\newtheorem{rmk}[theorem]{Remark}
\newtheorem{exam}[theorem]{Example}
\newtheorem{notation}[theorem]{Notation}
\newtheorem{subsec}[theorem]{}
\theoremstyle{plain}
\newtheorem*{thma}{Theorem A}
\newtheorem*{thmb}{Theorem B}
\newtheorem*{thmc}{Theorem C}
\newtheorem*{thmd}{Theorem D}
\theoremstyle{remark}
{\swapnumbers
    }
\newenvironment{myeq}[1][]
{\stepcounter{theorem}\begin{equation}\tag{\thetheorem}{#1}}
{\end{equation}}
\newenvironment{mysubsection}[2][]
{\begin{subsec}\begin{upshape}\begin{bfseries}{#2.}
			\end{bfseries}{#1}}
		{\end{upshape}\end{subsec}}
\newcommand{\C}{{\mathbb C}}
\newcommand{\Z}{{\mathbb{Z}}}
\newcommand{\Q}{{\mathbb Q}}
\def\stackbelow#1#2{\underset{\displaystyle\overset{\displaystyle\shortparallel}{#2}}{#1}}
\def\stackupeq#1#2{\overset{\displaystyle\underset{\displaystyle{\rotatebox{90}{$\cong$}}}{#2}}{#1}}
\def\stackbeloweq#1#2{\underset{\displaystyle\overset{\displaystyle{\rotatebox{90}{$\cong$}}}{#2}}{#1}}
\newcommand\CC{{\mathbb C}}
\newcommand\DD{{\mathcal D}}
\newcommand\FF{{\mathcal F}}
\newcommand\HH{{\mathbb H}}
\newcommand\LL{{\mathcal L}}
\newcommand\MM{{\mathcal M}}
\newcommand\PP{{\mathcal P}}
\newcommand\PMF{{\PP\kern-2pt\MM\FF}}
\newcommand\PML{{\PP\kern-2pt\MM\LL}}
\newcommand{\fsubd}{\mathrel{{\scriptstyle\searrow}\kern-1ex^d\kern0.5ex}}
\newcommand{\bsubd}{\mathrel{{\scriptstyle\swarrow}\kern-1.6ex^d\kern0.8ex}}
\newcommand{\fsubeq}{\mathrel{\raise-.7ex\hbox{$\overset{\searrow}{=}$}}}
\newcommand{\bsubeq}{\mathrel{\raise-.7ex\hbox{$\overset{\swarrow}{=}$}}}
\newcommand{\tsh}[1]{\left\{\kern-.9ex\left\{#1\right\}\kern-.9ex\right\}}
\newcommand{\Rank}{\mbox{Rank}}
\title[$SU(2)$-bundles over highly connected $8$-manifolds]{$SU(2)$-bundles over highly connected $8$-manifolds}
\author[S. Basu]{Samik Basu}
\address{Stat Math Unit, Indian Statistical Institute Kolkata 700108, India}
\email{samik.basu2@gmail.com}
\author[A. K. Ghosh]{Aloke Kr Ghosh}
\address{Stat Math Unit, Indian Statistical Institute Kolkata 700108, India}
\email{alokekrghosh005@gmail.com}
\author[S. Sau]{Subhankar Sau}
\address{Stat Math Unit, Indian Statistical Institute Kolkata 700108, India}
\email{subhankarsau18@gmail.com}
\subjclass[2020]{Primary: 55R25, 57P10; Secondary: 57R19, 55P35.}
\keywords{Poincar\'{e} duality complexes, principal bundles, sphere fibrations, loop spaces.}
\begin{document}

\begin{abstract}
In this paper, we analyze the possible homotopy types of the total space of a principal $SU(2)$-bundle over a $3$-connected $8$-dimensional Poincar\'{e} duality complex. Along the way, we also classify the $3$-connected $11$-dimensional complexes $E$ formed from a wedge of $S^4$ and $S^7$ by attaching a $11$-cell. 
\end{abstract}

\maketitle

\section{Introduction}
This paper explores $SU(2)$-bundles over $8$-manifolds, aiming for results akin to those about circle bundles over $4$-manifolds \cite{DuLi05,BaBa15}. In the case of simply connected $4$-manifolds, the results are established by leveraging the classification of simply connected $5$-manifolds achieved by Smale \cite{Sma62} and Barden \cite{Bar65}. 

A circle bundle $S^1 \to X \to M$ over a simply connected $4$-manifold $M$ is classified by $\alpha\in H^2(M)$, the total space $X(\alpha)$ is simply connected if $\alpha$ is primitive, and there are only two possibilities of $X(\alpha)$ via the classification of simply connected $5$-manifolds. Explicitly, we have \cite[Theorem 2]{DuLi05} 
\begin{enumerate}
\item  For every simply connected $4$-manifold $M$, there is a circle bundle $\alpha$, such that $X(\alpha)$ is homotopy equivalent to a connected sum of $S^2 \times S^3$. If $M$ is spin, among primitive $\alpha$, this is the only possibility. 
\item 
  For a simply connected $4$-manifold $M$ which is not spin and  a circle bundle $\alpha$ over it,  $X(\alpha)$ is either homotopy equivalent to a connected sum of $S^2 \times S^3$, or to a  connected sum of $S^2 \times S^3$ and another manifold $B$. The manifold $B$ is (up to diffeomorphism unique) a non-spin simply connected $5$-manifold whose homology is torsion-free, and $\Rank(H_2(B))=1$. 
\end{enumerate}
The results of Smale and Barden are geometric in nature, and do not generalize easily to higher dimensions. Using homotopy theoretic methods, it was possible to construct sphere fibrations \cite{BaGh2023} over highly connected Poincar\'{e}-duality complexes possibly by inverting a few primes or in high enough rank. Among these sphere fibrations, the only case where they could be principal bundles was in dimension $8$, and the question whether they may be realized as such was left unresolved.  

In this paper, we consider principal $SU(2)$-bundles, noting that $SU(2)=S^3$ is the only case apart from the circle where the sphere is a Lie group. The base space of the $SU(2)$-bundle which is appropriate for making a similar analysis is a highly connected $8$-manifold. More precisely, we consider Poincar\'{e} duality complexes $M$ ($8$-dimensional) that are $3$-connected. These are obtained by attaching a single $8$-cell to a buoquet of $4$-spheres. We denote 
\[ \PP \DD_3^8 = \mbox{ the collection of } 3\mbox{-connected } 8\mbox{-dimensional Poincar\'{e} duality complexes. }\]
The notation $M_k\in \PP\DD_3^8$ assumes that $\Rank(H_4(M_k))=k$. The attaching map of the $8$-cell is denoted by  $L(M_k)$, and is of the form (once we have chosen a basis $\{\alpha_1,\dots,\alpha_k\}$ of $\pi_4(M_k)\cong \Z^k$)
\begin{myeq}\label{Eq_general L(M) define}
        L(M_k)= \sum_{1\leq i< j\leq k}g_{i,j}[\alpha_i,\alpha_j] + \sum_{i=1}^{k}g_{i,i}\nu_i + \sum_{i=1}^{k}l_i\nu'_i. 
\end{myeq}
The matrix $\big((g_{i,j})\big)$ is the matrix of the intersection form, and hence, is invertible.  The notation $\nu_i$ stands for $\alpha_i\circ \nu$ and $\nu'_i$ for $\alpha_i \circ \nu'$. Here  $\nu$ is the Hopf map, and $\nu'\in \pi_7(S^4)$ is the generator for the $\Z/(12)$ factor satisfying $[\iota_4,\iota_4] = 2\nu + \nu'$. For such complexes, we consider 
\[ \PP(M_k)= \mbox{ the set of principal } SU(2) \mbox{-bundles } E(\psi)\stackrel{\psi}{\to} M_k \mbox{ such that } E(\psi) \mbox{ is }3\mbox{-connected}.\]   
The bundle $\psi$ is classified by a primitive element $\psi \in H^4(M_k)$, which satisfies a criterion (see Proposition \ref{Prop_tau map}). In this context, we first encounter the question whether $\PP(M_k)$ is non-empty. We prove (see Proposition \ref{existence of classifying map} and Proposition \ref{Prop_existence of classifying map odd case})
\begin{thma}
For $k\geq 3$, the set $\PP(M_k)$ is non-empty. 
\end{thma}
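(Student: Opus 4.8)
The plan is to translate ``$\PP(M_k)\neq\varnothing$'' into the solvability of a single quadratic congruence by a primitive vector, and then to solve that congruence using $k\ge 3$. First I would pin down the $3$-connectivity condition. For a bundle $S^3\to E(\psi)\to M_k$ the homotopy exact sequence gives $\pi_i(E(\psi))=0$ for $i\le 2$ automatically, and
\[ \pi_3(E(\psi))=\operatorname{coker}\big(\partial\colon \pi_4(M_k)\to \pi_3(S^3)\cong\Z\big). \]
Since $M_k$ is $3$-connected, Hurewicz gives $\pi_4(M_k)\cong H_4(M_k)\cong\Z^k$, and $\partial$ is evaluation against the Euler class $\psi\in H^4(M_k)$. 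Thus $E(\psi)$ is $3$-connected exactly when $\partial$ is onto, i.e.\ when $\psi$ is primitive; so the $3$-connectivity requirement coincides with primitivity, and the whole problem is to exhibit one primitive $\psi$ satisfying the existence criterion of Proposition~\ref{Prop_tau map}.

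Next I would make that criterion explicit as a function on $H^4(M_k)=\Z^k$. Since $BSU(2)\simeq\mathbb{HP}^\infty$ has $4$-skeleton $S^4$, the classifying map on $\bigvee_k S^4$ is determined by $\psi=(n_1,\dots,n_k)$ via $\alpha_i\mapsto n_i g$, where $g$ generates $\pi_4(\mathbb{HP}^\infty)\cong\Z$, and the obstruction to extending over the $8$-cell is the image of $L(M_k)$ under $f_*\colon \pi_7(\bigvee_k S^4)\to\pi_7(\mathbb{HP}^\infty)\cong\pi_6(S^3)\cong\Z/12$. Here one uses $g\circ\nu=0$ (as $\nu$ attaches the $8$-cell of $\mathbb{HP}^2$), the composition laws $(n g)\circ\nu=n(g\circ\nu)+\binom{n}{2}[g,g]$ and $(n g)\circ\nu'=n(g\circ\nu')$ (since $H(\nu')=0$), and the identity $[g,g]=g_*[\iota_4,\iota_4]=g\circ\nu'$, which is a generator of $\Z/12$ because $[\iota_4,\iota_4]=2\nu+\nu'$. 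Pushing \eqref{Eq_general L(M) define} forward then yields
\[ \tau(\psi)\;\equiv\;\sum_{i<j}g_{i,j}\,n_in_j+\sum_{i}g_{i,i}\binom{n_i}{2}+\sum_i l_i\,n_i \pmod{12}, \]
so that $\PP(M_k)\neq\varnothing$ if and only if this quadratic expression $Q(\psi)$ satisfies $Q(\psi)\equiv 0\pmod{12}$ for some primitive $\psi\in\Z^k$.

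Finally I would solve $Q(\psi)\equiv 0\pmod{12}$ by the Chinese Remainder Theorem, treating the congruence modulo $3$ and modulo $4$ separately; one checks that $Q\bmod 3$ depends only on $\psi\bmod 3$ while, because of the $\binom{n_i}{2}$ terms, $Q\bmod 4$ depends on $\psi\bmod 8$. Clearing denominators, $2Q(\psi)=\psi^TG\psi+(2l-v)^T\psi$ with $G=((g_{i,j}))$, $v_i=g_{i,i}$ and $l=(l_1,\dots,l_k)$, and unimodularity of $G$ makes it nondegenerate modulo $2$ and modulo $3$. Modulo $3$, $2Q$ is a quadratic polynomial with no constant term in $k\ge 3$ variables, so by Chevalley--Warning the number of its zeros in $(\Z/3)^k$ is divisible by $3$; as the zero vector is one of them there is a nonzero solution, and this is precisely where the hypothesis $k\ge 3$ is used. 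Modulo $4$ I would split into the even and odd cases of Propositions~\ref{existence of classifying map} and \ref{Prop_existence of classifying map odd case} according to the parity of the diagonal of $G$, solving $Q\equiv 0\pmod 4$ directly using one or two coordinate directions. Assembling the local solutions (using $\gcd(3,8)=1$) and noting that adding an element of $24\,\Z^k$ preserves $Q\bmod 12$, I would then adjust the representative to be primitive.

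The step I expect to be the main obstacle is the modulo-$4$ analysis together with the primitivity bookkeeping. The $\binom{n_i}{2}$ terms make $Q\bmod 4$ genuinely sensitive to $\psi\bmod 8$ rather than $\psi\bmod 4$, and the even/odd dichotomy of the intersection form forces two rather different arguments --- which is exactly why Theorem~A is proved as the two separate Propositions~\ref{existence of classifying map} and \ref{Prop_existence of classifying map odd case}. By contrast, the modulo-$3$ solvability is clean and is the precise point at which $k\ge 3$ enters.
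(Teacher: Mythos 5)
Your reduction of the statement to the solvability of $Q(\psi)\equiv 0 \pmod{12}$ by a primitive class is exactly the paper's setup (Proposition \ref{Prop_tau map}), and your treatment of the modulo-$3$ half is correct and in fact cleaner than the paper's: there the form is diagonalized over $\mathbb{F}_3$ and one checks by hand, for each sign pattern of the diagonal, that the resulting congruence in three of the variables has a solution with $\gcd(n_1,n_2,n_3)=1$, whereas Chevalley--Warning applied to $2Q=\psi^TG\psi+(2l-v)^T\psi$ produces a nonzero solution at once for every $k\geq 3$. (You still owe the small argument that a nonzero solution mod $3$ can be merged with the mod-$8$ solution and adjusted within its class mod $24$ to a primitive vector, but that is routine.)

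The genuine gap is the modulo-$4$/modulo-$8$ half, which you correctly flag as ``the main obstacle'' and then dispose of by saying you would solve $Q\equiv 0\pmod 4$ ``directly using one or two coordinate directions.'' That is where essentially all of the paper's work lies, and it is not automatic: there is no Chevalley--Warning over $\Z/4$, the congruence genuinely depends on $\psi\bmod 8$, and $2$-adic solvability can fail for $k=2$ (when the form is hyperbolic and both $l_i$ are odd, the obstruction is already nonzero mod $2$ for every primitive $(n_1,n_2)$), so some structural input is needed to rule out failure for $k\geq 3$. The paper gets this from the classification of unimodular forms \cite{MiHu73}: in the even case it reduces $L(M_k)$ to a normal form built from hyperbolic blocks (using \cite{BaGh2023} for $k\geq 6$, and for $k=4$ an explicit choice $(m,0,n,0)$ with $\gcd(m,n)=1$ and $ml_1+nl_3\equiv 0\pmod{12}$; note $k$ is automatically even here, so ``even form and $k\geq 3$'' means $k\geq 4$); in the odd case it reduces mod $8$ to diagonal $\pm 1$ blocks, possibly together with a hyperbolic block when $3\leq k\leq 5$, and verifies by direct computation that $\binom{n_1}{2}+\binom{n_2}{2}+\binom{n_3}{2}+\sum_i n_il_i\equiv 0\pmod 8$ and its variants always admit a solution with $\gcd(n_1,n_2,n_3)=1$. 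Without carrying out this case analysis, or supplying some substitute argument for solvability at the prime $2$, the proof is incomplete.
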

For $k=2$, there are examples where $\PP(M_k)$ is empty. This means that for every principal $SU(2)$-bundle over such complexes, the total space has non-trivial $\pi_3$. The idea here is that the existence of $\psi$ is given by a certain equation in $k$ variables, and solutions exist once $k$ is large enough. 

In the case of simply connected $4$-manifolds, the first kind of classification of circle bundles is the result of Giblin\cite{Gib68} which states 
\[ \mbox{ If } X=S^2\times S^2, \mbox{ then } X(\alpha) \simeq S^2 \times S^3 \mbox{ for any primitive } \alpha.\]
We also have an analogous result in the $8$-dimensional case 
\[ \mbox{ If } \psi \in \PP(S^4\times S^4), \mbox{ then } E(\psi) \simeq S^4 \times S^7 .\]
In fact, this fits into a more general framework. We call a manifold $M_k\in \PP\DD_3^8$ {\it stably trivial} if $L(M_k)$ is stably null-homotopic (that is the stable homotopy class of $L(M_k) : S^7 \to \big(S^4\big)^{\vee k}$ is $0$). In terms of \eqref{Eq_general L(M) define}, this means for every $i$, $g_{i,i}-2 l_i \equiv 0 \pmod{24}$. We have the following theorem (see Proposition \ref{stabtriv})
\begin{thmb}
Suppose $M_k$ is stably trivial. Then, for every $\psi \in \PP(M_k)$, $E(\psi)\simeq \#^{k-1} S^4\times S^7$, a connected sum of $k-1$ copies of $S^4\times S^7$. 
\end{thmb}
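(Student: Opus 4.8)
The plan is to identify $E(\psi)$ as a $3$-connected $11$-dimensional Poincaré duality complex built from $4$- and $7$-spheres by a single top cell, and then to determine its homotopy type via the classification of such complexes established earlier, showing that stable triviality of $L(M_k)$ forces the secondary invariants of the top-cell attaching map to vanish. I would begin with the Gysin sequence of the sphere fibration $S^3\to E(\psi)\xrightarrow{\psi}M_k$, whose Euler class is $\psi\in H^4(M_k)$. Since $\psi$ is primitive, cup product with $\psi$ makes $H^0(M_k)\to H^4(M_k)$ a split injection with cokernel $\Z^{k-1}$; since the intersection form $\big((g_{i,j})\big)$ is unimodular, cup product with $\psi$ makes $H^4(M_k)\to H^8(M_k)$ a surjection with kernel $\Z^{k-1}$. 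The sequence then gives $H^*(E(\psi))$ equal to $\Z,\Z^{k-1},\Z^{k-1},\Z$ in degrees $0,4,7,11$ and zero otherwise, with the cup pairing $H^4\otimes H^7\to H^{11}$ unimodular, exactly the cohomology ring shape of $\#^{k-1}S^4\times S^7$. Consequently $E(\psi)$ has a CW model $X\cup_\partial e^{11}$ with $(k-1)$ cells in each of the dimensions $4$ and $7$ and top cell attached by $\partial\in\pi_{10}(X)$; this is the class of complexes classified earlier in the paper.

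Next I would read off $\partial$ from the bundle itself. Choosing a basis of $\pi_4(M_k)$ with $\psi=(1,0,\dots,0)$, the bundle restricted to the $4$-skeleton $\bigvee^k S^4$ is Hopf over the first sphere and trivial over the rest, so its total space $E_0$ is $3$-connected with the homology of $\big(\bigvee^{k-1}S^4\big)\vee\big(\bigvee^{k}S^7\big)$. Pulling the $8$-cell of $M_k$ back through the locally trivial bundle presents $E(\psi)=E_0\cup_{\tilde L}(D^8\times S^3)$ for a bundle lift $\tilde L$ of the attaching map $L(M_k)$; the product structure on $D^8\times S^3$ supplies one $8$-cell, glued along $\tilde L|_{S^7\times *}\in\pi_7(E_0)$, together with the top $11$-cell. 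The base-direction components of $\tilde L|_{S^7\times *}$ are precisely the Whitehead-product and $\nu_i,\nu'_i$ terms of $L(M_k)$ appearing in \eqref{Eq_general L(M) define}, while its fibre (Hopf) component is primitive and collapses one $7$-sphere. Tracking this through, the $[\iota_4,\iota_7]$-part of $\partial$ records the unimodular intersection pairing, whereas its components in $\pi_{10}(S^4)$ and $\pi_{10}(S^7)=\Z/24$ are governed by the $\nu$-data of $L(M_k)$.

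I would then invoke the classification of $3$-connected $11$-complexes of the form $X\cup_\partial e^{11}$. It expresses the homotopy type through (i) the unimodular intersection form on $H^4\otimes H^7$, which a self-equivalence of $X$ brings to the identity, yielding the $\#^{k-1}S^4\times S^7$ backbone, and (ii) secondary invariants detected by the Hopf maps, taking values in $\Z/2$ and $\Z/24$. The hypothesis that $M_k$ is stably trivial is exactly that $L(M_k)$ suspends to zero, i.e. $g_{i,i}-2l_i\equiv 0\pmod{24}$ for all $i$; here one uses that Whitehead products suspend trivially and that $[\iota_4,\iota_4]=2\nu+\nu'$ gives $\Sigma\nu'=-2\nu$ in $\pi_3^s=\Z/24$. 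Feeding the identification of the previous step into the classification, stable triviality annihilates the $\Z/24$-valued (and the remaining $\Z/2$-valued) secondary invariants of $\partial$, so that only the identity intersection form survives and $E(\psi)\simeq\#^{k-1}S^4\times S^7$.

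The main obstacle is the matching between the second and third steps: one must prove that the purely \emph{stable} hypothesis on $L(M_k)$ controls the \emph{unstable} top-cell attaching map $\partial$, and in particular that the mod-$24$ condition annihilates \emph{every} secondary invariant produced by the classification rather than only the evident $\Z/24$ one. This demands careful bookkeeping of the Hilton--Milnor decomposition of $\pi_{10}(X)$ and of how the fibre's Hopf map interacts with the bundle projection, using the relations $[\iota_4,\iota_4]=2\nu+\nu'$ and $\Sigma\nu'=-2\nu$ to compress the unstable $\nu_i,\nu'_i$ coefficients of $L(M_k)$ into the single stable invariant $g_{i,i}-2l_i\bmod 24$.
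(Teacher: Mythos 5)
Your overall strategy --- place $E(\psi)$ in $\PP\DD^{11}_{4,7}$, compute the invariants of its top-cell attaching map, and feed them into the classification --- is the same as the paper's, but the two computations that actually carry the proof are not established in your write-up, and you have correctly identified the first of them as "the main obstacle" without resolving it. First, the passage from the stable class of $L(M_k)$ to the $\Z/(24)$-invariant $\lambda_s(E(\psi))$ is not something one can get by informally "tracking" a bundle lift $\tilde L$ of $L(M_k)$ through the decomposition $E(\psi)=E_0\cup(D^8\times S^3)$: the coefficient of $\iota_7^j\circ\nu_{(7)}$ in the attaching map of the $11$-cell is an unstable datum of a map $S^{10}\to(S^4\vee S^7)^{\vee k-1}$, and the paper extracts it by computing the real $e$-invariant of $r_j\circ L(E)$ via the Chern character and the $K$-theoretic Thom isomorphism for $f^*(\gamma_{\HH})$ (Propositions \ref{KMk}, \ref{Prop_K of sigma E}, \ref{Prop_tau j calculate} and Theorem \ref{Th_divisibility}); it is precisely here that the quantity $g_{j,j}-2l_j\pmod{24}$ appears, and that one needs the generators of $\tilde K^0(\Sigma E)$ to lie in the image of complexification so that \eqref{Eq_Adam's result} upgrades the $\Q/\Z$-denominator from $12$ to $24$. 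Without some substitute for this computation your argument shows at best that $\lambda_s$ is \emph{related to} the $\nu$-data of $L(M_k)$, not that stable triviality forces $\lambda_s\equiv 0\pmod{24}$.

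Second, the vanishing of the $\Z/2$-invariant $\epsilon_s(E(\psi))$ is asserted but never argued. This is not a consequence of $\lambda_s=0$: for $\lambda_s=0$ the classification still allows the two distinct homotopy types $\#^{k-1}E_{0,0,0}$ and $\#^{k-2}E_{0,0,0}\#E_{0,1,0}$, so the whole content of the theorem at this point is ruling out the second one. The paper does this in Proposition \ref{Prop_Stable total space E} by a separate stable argument: since $2\mid\sigma(M_k)$ the map $\pi_{10}^s(S^4)^{\oplus k}\to\pi_{10}^s(M_k)$ is injective, and chasing $\Sigma^\infty L(E)=\lambda(\psi)\beta_{k-1}\circ\nu_{(7)}+\epsilon\,\alpha_{k-1}\circ\nu^2$ (which dies in $\pi_{10}^s(E)$) down to $\pi_{10}^s(M_k)$ forces $\epsilon=0$. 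Two smaller omissions: membership of $E(\psi)$ in $\PP\DD^{11}_{4,7}$ requires showing the $7$-cells attach trivially to the $4$-cells (Proposition \ref{Prop_phi i is 0}; the Gysin computation of $H^*(E(\psi))$ does not by itself give $E(\psi)\setminus\{pt\}\simeq(S^4\vee S^7)^{\vee k-1}$ since $\pi_6(S^4)\cong\Z/2$), and the rank-$2$ case, where the general argument is unavailable, must be treated by hand (stable triviality there forces $M_2=S^4\times S^4$ and $E=S^4\times S^7$).
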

This directly generalizes the result for circle bundles over simply connected $4$-manifolds that are spin (identifying the spin manifolds as those whose attaching map is stably null). 

We proceed towards a more general classification of the homotopy type of the space $E(\psi)$ for $\psi \in \PP(M_k)$. 
Let %$\PP\DD^{11}_3$ be the class of $3$-connected $11$-dimensional Poincar{\'e} duality complexes and 
$\PP\DD^{11}_{4,7}$ be the class of $3$-connected $11$-dimensional Poincar{\'e} duality complexes $E$ such that $E\setminus \{pt\}\simeq $ a wedge of $S^4$ and $S^7$. We first observe that $E(\psi)\in \PP \DD_{4,7}^{11}$ (see Proposition \ref{Prop_phi i is 0}), and we try to address the question of the classification of complexes in  $\PP \DD_{4,7}^{11}$ up to homotopy equivalence. The homology of such complexes $E$ is given by 
\[ H_m(E) \cong \begin{cases} \Z & m=0, 11 \\ 
                                          \Z^r & m=4,7 \\ 
                                            0 & \mbox{otherwise}.\end{cases} \]
We denote the number $r$ by $\Rank(E)$. The classification works differently for $r=1$, and for $r\geq 2$. Table \ref{table:homotopy equivqlence of E} lists the various possibilities for $r=1$. For $r\geq 2$, $E$ is a connected sum of copies of  $S^4 \times S^7$, and the complexes $E_{\lambda,\epsilon,\delta}$ defined below. Note that 
\begin{align*} 
\pi_{10}(S^{4}\vee S^{7})&\cong \pi_{10}(S^{4})\oplus \pi_{10}(S^{7})\oplus \pi_{10}(S^{10}) \\
 &\cong \Z/(24)\{x\}\oplus \Z/(3)\{y\} \oplus \Z/(24)\{\nu_7\}\oplus\Z\{[\iota_4,\iota_7]\}.
\end{align*}
Here, $x= \nu\circ \nu_7$ and $y= \nu' \circ \nu_7$. Let
\[    \phi_{\lambda, \epsilon, \delta}= [\iota_{4},\iota_{7}]+ \lambda (\iota_{7}\circ \nu_{7}) + \epsilon (\iota_{4}\circ x )  + \delta (\iota_{4}\circ y),
\] 
and define,  
$$    E_{\lambda, \epsilon, \delta}= (S^{4}\vee S^{7})\cup_{\phi_{\lambda, \epsilon, \delta}}D^{11}.
$$
The attaching map of the top cell of $E$ takes the form 
\[ L(E) : S^{10} \to \big(S^4 \vee S^7\big)^{\vee r}.\]
The stable homotopy class of $L(E)$ lies in 
\[\pi_{10}^s \Big(\big(S^4 \vee S^7\big)^{\vee r}\Big) \cong \big(\Z/(24)\{\nu\} \oplus \Z/(2)\{\nu^2\}\big)^{\oplus r}.\]
This takes the form $\lambda_s \beta\circ \nu + \epsilon_s \alpha \circ \nu^2$ for some $\beta\in \pi_7\Big(\big(S^4 \vee S^7\big)^{\vee r}\Big)$ and $\alpha \in \pi_4\Big(\big(S^4 \vee S^7\big)^{\vee r}\Big)$. Up to a change of basis we may assume that $\lambda_s \mid 24$, and if $\lambda_s$ is even, $\epsilon_s\in \Z/(2)$. These numbers are invariant over the homotopy equivalence class of $E$, and are denoted by $\lambda_s(E)$, and $\epsilon_s(E)$ (defined only if $\lambda_s(E)$ is even). We use these invariants to classify the homotopy types of elements in $\PP\DD_{4,7}^{11}$ (see Theorem \ref{Th_general classification of E})
\begin{thmc}
    Let $E \in \PP\DD^{11}_{4,7}$. Then the homotopy type of $E$ is determined by the following.
    \begin{enumerate}
        \item If $\lambda_s(E)$ is even and $\epsilon_s(E)=0$, then 
$$E \simeq \#^{r-1} E_{0,0,0} \# E_{\lambda_s, \epsilon, \delta} \quad \text{where } \epsilon \equiv 0 \pmod{2}.$$
        \item If $\lambda_s(E)$ is even and $\epsilon_s(E)=1$, then 
$$\begin{aligned}
            &E\simeq
            \#^{r-1} E_{0,0,0} \# E_{\lambda_s, \epsilon, \delta} \quad & &\text{where } \epsilon \equiv 1 \pmod{2}\\
          \text{or }  &E\simeq \#^{r-2} E_{0,0,0} \#E_{0,1,0} \# E_{\lambda_s, \epsilon, \delta} \quad & &\text{where } \epsilon \equiv 0 \pmod{2}.
        \end{aligned}
        $$
        \item If $\lambda_s(E)$ is odd, then 
$$E \simeq \#^{r-1} E_{0,0,0} \# E_{\lambda_s, \epsilon, \delta} \quad \text{or} \quad E \simeq \#^{r-2} E_{0,0,0} \#E_{0,1,0} \# E_{\lambda_s, \epsilon, \delta}.$$
    \end{enumerate}
    Further given $\lambda_s$, the choices of $\epsilon$ and $\delta$ are those which are mentioned in Table \ref{table:homotopy equivqlence of E}.
\end{thmc}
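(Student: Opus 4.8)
The plan is to work at the level of the attaching map $L(E)\colon S^{10}\to (S^4\vee S^7)^{\vee r}$ of the top cell. A complex in $\PP\DD^{11}_{4,7}$ is recovered from its $10$-skeleton $(S^4\vee S^7)^{\vee r}$ together with $L(E)$, and two such complexes are homotopy equivalent exactly when their attaching maps lie in one orbit of the action of the group of based self-homotopy-equivalences of the wedge (combined with a degree $\pm 1$ on the top cell, reflecting the choice of orientation). First I would expand $\pi_{10}\big((S^4\vee S^7)^{\vee r}\big)$ by Hilton's theorem as a sum indexed by basic products: the weight-two products $[\iota_{4,i},\iota_{7,j}]$ give copies of $\pi_{10}(S^{10})=\Z$; the inclusions $\iota_{7,i}$ give $\pi_{10}(S^7)=\Z/24\{\nu_7\}$; the inclusions $\iota_{4,i}$ give $\pi_{10}(S^4)=\Z/24\{x\}\oplus\Z/3\{y\}$; and the remaining basic products (the weight-two $(4,4)$-products composed with $\nu_7$, and the weight-three $(4,4,4)$-products) give stably-trivial secondary terms. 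Writing this out,
\begin{align*}
L(E)={}&\sum_{i,j}m_{ij}[\iota_{4,i},\iota_{7,j}]+\sum_i\lambda_i(\iota_{7,i}\circ\nu_7)\\
&{}+\sum_i\big(\epsilon_i(\iota_{4,i}\circ x)+\delta_i(\iota_{4,i}\circ y)\big)+(\text{secondary terms}).
\end{align*}
Poincar\'e duality identifies $(m_{ij})$ with the cup-product pairing $H^4(E)\otimes H^7(E)\to H^{11}(E)\cong\Z$, which is perfect, so $(m_{ij})\in GL_r(\Z)$.

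The heart of the proof is a normal-form reduction under the self-equivalence action. Acting by the $GL_r(\Z)\times GL_r(\Z)$ of equivalences that permute and combine the $S^4$-summands among themselves and the $S^7$-summands among themselves, I would first bring $(m_{ij})$ to the identity, so that the leading term becomes $\sum_{i=1}^r[\iota_{4,i},\iota_{7,i}]$; tracking how this transports the remaining terms is routine. The key structural fact for the connected-sum statement is that whenever $L(E)$ splits additively as a sum of maps supported on disjoint blocks of wedge summands, $E$ splits as the corresponding connected sum, since the pinch map $S^{10}\to S^{10}\vee S^{10}$ realizes $\#$ at the level of attaching maps. In particular a diagonal term $[\iota_{4,i},\iota_{7,i}]$ carrying no correction contributes a factor $E_{0,0,0}=S^4\times S^7$. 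Thus the aim of the reduction is to clear the secondary terms off the diagonal blocks and to sweep all of the $\lambda_i,\epsilon_i,\delta_i$ onto one distinguished block, which will be the summand $E_{\lambda_s,\epsilon,\delta}$.

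The stable invariants control this bookkeeping. Stabilizing annihilates every Whitehead product, so $L(E)$ maps into $(\Z/24\{\nu\}\oplus\Z/2\{\nu^2\})^{\oplus r}$, with $\iota_{7,i}\circ\nu_7\mapsto\nu$ and $\iota_{4,i}\circ x\mapsto\nu^2$, while $\iota_{4,i}\circ y\mapsto 0$ (as $y=\nu'\circ\nu_7$ and the relation $[\iota_4,\iota_4]=2\nu+\nu'$ gives $\nu'=-2\nu$ stably, whence $y\mapsto -2\nu^2=0$) and the secondary terms vanish. Using the $GL_r(\Z)$-action on the $S^7$-summands I can collect the $\lambda_i$ into a single divisor $\lambda_s\mid 24$, and the $\epsilon_i$ into a class $\epsilon_s\in\Z/2$. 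The reason $\epsilon_s$ is only defined when $\lambda_s$ is even is the self-equivalence $\iota_{7,i}\mapsto\iota_{7,i}+\iota_{4,i}\circ\nu$, under which (precomposition with the suspension $\nu_7$ being additive) $(\iota_{7,i}+\iota_{4,i}\circ\nu)\circ\nu_7=\iota_{7,i}\circ\nu_7+\iota_{4,i}\circ x$; stably this alters $\epsilon_s$ by $\lambda_s\bmod 2$, so $\epsilon_s$ is a genuine invariant precisely when $\lambda_s$ is even and can be normalized to $0$ otherwise. Once the distinguished block is isolated, pinning down its homotopy type is exactly the rank-one problem already solved in Table \ref{table:homotopy equivqlence of E}, which lists the admissible $(\epsilon,\delta)$ for each $\lambda_s$; assembling this with the leftover $E_{0,0,0}$ factors — and a possible stray $E_{0,1,0}$ factor, which arises when an odd $\nu^2$-contribution cannot be absorbed into the distinguished block — produces the three cases of the statement.

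The step I expect to be the main obstacle is the honest treatment of the secondary terms together with the completeness of the invariants. One must show that the $(4,4)$-products composed with $\nu_7$ and the weight-three products can always be eliminated by self-equivalences without reintroducing corrections on already-cleared diagonal blocks, and, conversely, that $\lambda_s$, $\epsilon_s$ and the data $(\epsilon,\delta)$ of Table \ref{table:homotopy equivqlence of E} form a \emph{complete} set of invariants: two normal forms sharing these data must be carried to one another by a self-equivalence, while distinct data must not be. Verifying completeness amounts to computing the full orbit set of the self-equivalence action on the relevant finite part of $\pi_{10}$, and it is here that the parity case analysis — and in particular the precise conditions under which the extra $E_{0,1,0}$ summand appears — is forced.
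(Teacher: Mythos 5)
Your reduction to a normal form is essentially the paper's own first half (Propositions \ref{Prop_attaching map of E}--\ref{Prop_E in last two component} and Corollary \ref{Cor_stable lambda and epsilon}): Hilton--Milnor expansion, removal of the off-diagonal and higher Whitehead terms, concentration of the $\lambda_i$ into a single $\lambda_s$ via the $e$-invariant, and the observation that a block decomposition of $L(E)$ realizes a connected-sum decomposition of $E$. One small caveat there: the weight-three Whitehead products are not ``eliminated by self-equivalences'' --- the paper rules them out a priori because they inject into $H_*(\Omega E)$ while the attaching map must die there; only the $[\iota_4^i,\iota_4^j]\circ\nu_7$- and $[\iota_4^i,\iota_4^j]\circ\nu_7'$-terms are absorbed by shearing the $\iota_7$'s.

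The genuine gap is that you stop exactly where the proof of the theorem begins. What Theorem \ref{Th_general classification of E} asserts beyond the normal form is the \emph{completeness} of the invariants: that $\#^{k-2}E_{0,0,0}\#E_{\lambda_s,\epsilon,\delta}$ and $\#^{k-2}E_{0,0,0}\#E_{\lambda_s,\epsilon',\delta'}$ (or the variants with an $E_{0,1,0}$ factor) are homotopy equivalent only if $(\epsilon,\delta)$ and $(\epsilon',\delta')$ are related by the rank-one moves of \eqref{Eq_homotopy equivalences for rank one case}, together with the determination of exactly when the stray $E_{0,1,0}$ summand can or cannot be absorbed. You name this step (``computing the full orbit set'') but do not carry it out, and it is not routine: an arbitrary homotopy equivalence of the connected sums is a priori far larger than the group generated by your normalizing moves, so one cannot conclude that two normal forms with distinct data are inequivalent merely because no normalizing move identifies them. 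The paper closes this by taking an arbitrary equivalence $f\colon Y_1\to Y_2$, using the stable homotopy type to force the induced map on $\pi_7$ to be, after pre- and post-composition, a product of shearings $\beta_i\mapsto\gamma_i+c_i\gamma_{k-1}$, writing the restriction $f^{(7)}$ to the $7$-skeleton in full generality, and equating coefficients in $f^{(7)}\circ L(Y_1)\simeq L(Y_2)$ of $\eta_{k-1}\circ x$, $\eta_{k-1}\circ y$, $\eta_{k-2}\circ x$, and --- in the odd-$\lambda_s$ case, which is the subtlest --- of $[\eta_{k-1},[\eta_{k-2},\eta_{k-1}]]$ and $[\eta_{k-2},\eta_{k-1}]\circ\nu_{(7)}$; this yields $\epsilon'=(\lambda_s+2)b_{k-1}-4b'_{k-1}$, $\delta'=(\lambda_s+1)b'_{k-1}$ and the congruence \eqref{Eq_coeff of x k-2} forcing $\hat{\epsilon}$. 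Without some such explicit coefficient computation the parity trichotomy in the statement is not established.
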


We see that in the list given in Table  \ref{table:homotopy equivqlence of E}, for certain cases the homotopy type of $E$ is  determined by $\lambda_s(E)$ and $\epsilon_s(E)$. This happens if $\lambda_s(E)=$ $0$, or $12$. We also observe that the homotopy type of $\Omega E$ depends only on the rank $r$. Now, we look at $M_k\in \PP\DD_3^8$, and try to determine the set of homotopy equivalence classes of $E(\psi)$ for $\psi\in \PP(M_k)$. In this process, we determine a formula for $\lambda(\psi) := \lambda_s(E(\psi))$ (Proposition \ref{Prop_tau j calculate}), and using this we determine the set of possible values of $\lambda_s(\psi)$ for $\psi\in \PP(M_k)$. 
The stable homotopy class of $L(M_k)$ lies in 
\[\pi_7^s \Big((S^4 )^{\vee k}\Big) \cong \big(\Z/(24)\{\nu\} \big)^{\oplus k}.\]
This takes the form $\sigma_s \alpha\circ \nu$ for some $\alpha\in \pi_4\big((S^4 )^{\vee k}\big)$, and up to a change of basis for $k\geq 2$, $\sigma(M_k):=\gcd(\sigma_s,24)$ is an invariant of the stable homotopy type of $M_k$. Other than $k$ and $\sigma(M_k)$, the explicit stable homotopy class of $\alpha$ above yields a linear map $\tau : H^4(M_k)\to \Z/(24)$ given by $\tau(\psi)= \psi(\sigma_s\alpha)$. We use the invariants $k$, $\sigma(M_k)$, $\tau$, and the intersection form to completely determine the possibilities of $\lambda(\psi)$ for $\psi\in \PP(M_k)$. (see Theorem \ref{Th_divisibility}, Proposition \ref{Prop_sigma k achieved}, Theorem \ref{Theorem_divisibility theorem for 3}, Theorem \ref{divfor4}, and Theorem \ref{oddcasethm})
\begin{thmd}
For any $\psi\in \PP(M_k)$, $\lambda(\psi)$ is a multiple of $\sigma(M_k)$ ($\pmod{24}$). Conversely, the multiples of $\sigma(M_k)$ that may be achieved are described as follows 
\begin{enumerate}
\item If the intersection form of $M_k$ is odd  and $k\geq 7$, then $\{\lambda(\psi) \mid \psi \in \PP(M_k)\}$ equals the set of multiples of $\sigma(M_k) \pmod{24}$. 
\item If the intersection form of $M_k$ is even, each $\psi \in \PP(M_k)$ satisfies $\epsilon_s(\psi)\equiv 0 \pmod{2}$. 
\item If $k\geq 7$, there are $\psi \in \PP(M_k)$ such that $\lambda(\psi)=\sigma(M_k)$, and also there are $\psi \in \PP(M_k)$ such that $\lambda(\psi)=3 \sigma(M_k)$. 
\item If $\sigma(M_k)\equiv 2$, or $4 \pmod{8}$ for $k\geq 5$, there is a $\psi\in \PP(M_k)$ such that $\lambda(\psi)\equiv 0 \pmod{8}$ if and only if the complex satisfies hypothesis $(H_8)$.  
\item If $\sigma(M_k)\equiv 2 \pmod{8}$ for $k\geq 5$, there is a $\psi\in \PP(M_k)$ such that $\lambda(\psi)\equiv 4 \pmod{8}$ if and only if the complex satisfies hypothesis $(H_4)$. 
\end{enumerate}
\end{thmd}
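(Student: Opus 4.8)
The plan is to deduce all five parts from a single stable computation of the invariants $\lambda(\psi)$ and $\epsilon_s(\psi)$, after which the first assertion becomes a divisibility observation and the converse becomes a family of Diophantine realisation problems graded by the $2$-adic size of $\sigma(M_k)$.

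\emph{Setting up the formula.} Writing $\psi_i=\langle\psi,\alpha_i\rangle$, the principal $SU(2)$-bundle $\psi$ has an associated rank-$4$ disc bundle with boundary $E(\psi)$, giving a stable cofibre sequence $E(\psi)\to M_k\to M_k^{\psi}$ with $M_k^{\psi}$ the Thom space. As $M_k$ has cells only in dimensions $0,4,8$, the Thom space has cells in dimensions $4,8,12$, and over each $S^4_i$ the relevant piece is $S^4\cup_{\psi_i\nu}e^8$, since the $J$-homomorphism carries the degree-$\psi_i$ clutching class to $\psi_i\nu$. Tracing \eqref{Eq_general L(M) define} through this sequence, the diagonal terms $g_{i,i}\nu_i+l_i\nu'_i$, whose stable class is $(g_{i,i}-2l_i)\nu$ because $\nu'\simeq-2\nu$ stably (from $[\iota_4,\iota_4]=2\nu+\nu'$ being stably null), convert into the $\nu$-multiple attaching the top cell of $E(\psi)$ to its $7$-cells, while the $\nu^2$-part records a mod-$2$ self-intersection. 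This yields (Proposition \ref{Prop_tau j calculate}) an expression whose linear part is $\tau(\psi)=\sum_i(g_{i,i}-2l_i)\psi_i\pmod{24}$ and whose $\nu^2$-coefficient is $\epsilon_s(\psi)\equiv Q(\psi,\psi)\pmod 2$, with $Q$ the intersection form.

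\emph{Divisibility and part (2).} Since $\gcd_i(g_{i,i}-2l_i)$ reduces mod $24$ to $\sigma(M_k)$ after the normalising change of basis, every value $\tau(\psi)$ is a multiple of $\sigma(M_k)$, which is the first assertion (Theorem \ref{Th_divisibility}). If the intersection form is even then $Q(\psi,\psi)\equiv 0\pmod 2$ for all $\psi$, hence $\epsilon_s(\psi)\equiv 0$, giving part (2).

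\emph{Realisation away from $2$.} For the converse one must exhibit, for each admissible target $t$, a primitive $\psi$ with $\lambda(\psi)\equiv t$; primitivity is exactly the $3$-connectivity criterion of Proposition \ref{Prop_tau map}, as $\pi_3(E(\psi))=\Z/\gcd_i(\psi_i)$. At odd primes this is a linear congruence $\sum_i(g_{i,i}-2l_i)\psi_i\equiv t$, and for $k\ge 7$ there is enough freedom to solve it simultaneously at all odd primes while keeping $\psi$ primitive; an odd form removes the $\epsilon_s$-constraint, yielding part (1) (Theorem \ref{oddcasethm}), and the special targets $\sigma(M_k)$ and $3\sigma(M_k)$ are produced directly (part (3), Proposition \ref{Prop_sigma k achieved} and Theorem \ref{Theorem_divisibility theorem for 3}), the factor $3$ arising from the $\nu'$-contribution.

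\emph{The $2$-primary refinement --- the main obstacle.} When $\sigma(M_k)\equiv 2$ or $4\pmod 8$ the odd-primary argument cannot control $\lambda(\psi)\bmod 8$, because realising a value forces a compatible value of $\epsilon_s(\psi)=Q(\psi,\psi)\bmod 2$, and the two constraints are coupled through the classification of Theorem C. Achieving $\lambda(\psi)\equiv 0\pmod 8$ (resp. $\equiv 4\pmod 8$) therefore means solving the linear condition together with $Q(\psi,\psi)$ prescribed mod $2$ for a primitive $\psi$, which is solvable exactly under a structural condition on $(g_{i,j},l_i)$ that $(H_8)$ (resp. $(H_4)$) isolates (Theorems \ref{divfor4}, \ref{oddcasethm}). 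I expect the real difficulty to lie in both directions of this equivalence: necessity requires pinning down the exact image of $\psi\mapsto(\lambda(\psi),\epsilon_s(\psi))$ on primitive vectors, while sufficiency requires choosing $\psi$ realising the prescribed $2$-adic data without destroying primitivity, which is precisely where the bound $k\ge 5$ supplies the independent coordinates needed to decouple the conditions.
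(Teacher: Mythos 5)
Your overall architecture --- a stable $e$-invariant computation followed by Diophantine realisation problems graded by the $2$-adic and $3$-adic data --- does match the paper's strategy, and your identification of which labelled results carry which part of Theorem D is accurate. However, there are genuine gaps at the two places where the actual content lives.

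First, your central formula conflates $\tau(\psi)$ with $\lambda(\psi)$. The quantity $\sum_i(g_{i,i}-2l_i)\psi_i \pmod{24}$ is the value $\tau(\psi)$ of the linear map $\tau$ on the class $\psi$ (the $e$-invariant of $\tilde\psi\circ L(M_k)$), and it governs only the \emph{existence} criterion $\psi\cup\psi\equiv\tau(\psi)z$ of Proposition \ref{Prop_tau map}. The invariant $\lambda(\psi)=\lambda_s(E(\psi))$ is something else: by Proposition \ref{Prop_tau j calculate} it is the gcd of the $e$-invariants of the $k-1$ components of $L(E(\psi))$, namely $\gcd(\tau_1,\dots,\tau_{k-1})$ in Case 1 of Notation \ref{Nota_matrix form} and $\gcd(\tau_1,\dots,\tau_{k-2},\tau_k-g_{k,k}\tau_{k-1})$ in Case 2, computed in a basis adapted to $\psi$ via the $K$-theory of the Thom space $Th(f^*\gamma_{\HH})$ and the complexification $c\colon KO\to K$ (this is where the factor $\frac{1}{24}$ rather than $\frac{1}{12}$ enters, via \eqref{Eq_Adam's result}). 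Without this gcd formula, none of the realisation statements (1), (3), (4), (5) can be attacked: each of them is proved by exhibiting a basis $\{\psi_1,\dots,\psi_{k-2},\psi',\psi\}$ with prescribed cup products against $\psi$ and prescribed $\tau$-values, and then reading off the gcd. Your single linear congruence $\sum_i(g_{i,i}-2l_i)\psi_i\equiv t$ is not the condition being solved. Relatedly, your asserted identity $\epsilon_s(\psi)\equiv Q(\psi,\psi)\pmod 2$ is not established anywhere (the paper proves $\epsilon_s=0$ for even forms by a diagram chase in stable homotopy in Proposition \ref{Prop_Stable total space E}, not by such a formula), and since $\psi$ must already satisfy $\psi^2\equiv\tau(\psi)z$, the claim needs justification rather than assertion.

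Second, the realisation arguments are where the rank bounds $k\geq 5$ and $k\geq 7$ actually come from, and your proposal does not supply them. In the paper these bounds arise from the classification of unimodular forms: one needs to find, inside $\ker(-\cup\psi_0)\cap\ker(\tau)$, a primitive class $\psi$ with $\psi^2\equiv\tau(\psi)\equiv 0$ modulo $3$ (requiring $k-2\geq 3$) or modulo $8$ (requiring $k-2\geq 5$), using \cite[Chapter II, (3.2)--(3.4)]{MiHu73}, and then to correct $\psi'$ within $\ker\tau$ using Poincar\'e duality. The equivalences with $(H_8)$ and $(H_4)$ in parts (4) and (5) are then proved by translating the existence of such a basis into the condition $(\ker\tau)^{\perp}=(\sigma(M_k)\psi)$; you correctly anticipate that both directions require work, but ``enough freedom to solve it while keeping $\psi$ primitive'' is precisely the step that must be made quantitative. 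As it stands the proposal is a plausible outline of the right strategy rather than a proof.
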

For lower values of $k$, we do not get systematic results like the above. That is, the set $\{\lambda(\psi) \mid \psi \in \PP(M_k)\}$ is not completely determined by $\sigma(M_k)$, $k$, $\tau$, and the intersection form. Theorem D implies that there are certain $M_k$ whose intersection form is even and there is no $\psi \in \PP(M_k)$ such that $E(\psi)\simeq \#^{k-1} S^4\times S^7$, however if the intersection form is odd, then for $k\geq 7$, there is a principal bundle $SU(2) \to \#^{k-1}(S^4\times S^7) \to M_k$. 

\begin{mysubsection}{Organization}
In \S \ref{htpyclass}, we prove a classification result for certain $3$-connected $11$-dimensional complexes which proves Theorem C.  In  \S \ref{sthtpyform}, we prove formulas relating the stable homotopy type of the total space to that of the base using the Chern character. The results for manifolds with even intersection form are proved in \S \ref{evencase}, and those with odd intersection form in  \S \ref{oddcase}. 
\end{mysubsection}

%\begin{ack} 
%The first author would like to thank David Blanc for assistance with explicit formula for Whitehead products in low degrees used in the paper. The second author would like to thank Debanil Dasgupta and Pinka Dey for useful discussions.
%\end{ack}

% \section{Certain sphere bundles over $S^4$ and $S^7$}

% \begin{mysubsection}{$S^{7}$-fibre bundles over $S^{4}$ }

% \end{mysubsection}

% \begin{mysubsection}{ $S^{4}$-fibre bundles over $S^{7}$}

% \end{mysubsection}

\section{Homotopy classification of certain $3$-connected $11$-complexes}\label{htpyclass}
We study $3$-connected $11$-dimensional Poincar{\'e} duality complexes $E$ such that $E \setminus \{pt\}$ is homotopic to a wedge of copies of $S^4$ and $S^7$. We write $\PP\DD^{11}_{4,7}$ for the collection of such complexes. 
Our target in this section is to analyze them up to homotopy equivalence. We show that these are classified by numbers $\lambda$, $\epsilon$ and $\delta$ which are explained in detail below.

\begin{mysubsection}{The rank one case} 
Let $E \in \PP\DD^{11}_{4,7}$ be such that $E \setminus \{pt\} \simeq S^4 \vee S^7$, that is, $\Rank(H_4(E))=1$. The homotopy type of $E$ is determined by the attaching map of the top cell, which is an element of 
\[\pi_{10}(S^{4}\vee S^{7})\cong \pi_{10}(S^{4})\oplus \pi_{10}(S^{7})\oplus \pi_{10}(S^{10}).\]
This must be of the form
\begin{myeq}\label{Eq_define phi lambda epsilon delta}
    \phi_{\lambda, \epsilon, \delta}= [\iota_{4},\iota_{7}]+ \lambda (\iota_{7}\circ \nu_{7}) + \epsilon (\iota_{4}\circ x )  + \delta (\iota_{4}\circ y).
\end{myeq} 
The total space associated with $\phi_{\lambda, \epsilon, \delta}$ is denoted by 
$$    E_{\lambda, \epsilon, \delta}= (S^{4}\vee S^{7})\cup_{\phi_{\lambda, \epsilon, \delta}}D^{11}.
$$

Note that as $(-\iota_4)\circ \nu= \nu + \nu'$, we have $(-\iota_4)\circ x= x+ y.$
For given any $\lambda,\epsilon$ and $\delta$; we observe the effect of the self homotopy equivalences on $E_{\lambda, \epsilon, \delta}$ as follows
  \begin{myeq}  \label{Eq_homotopy equivalences for rank one case}
  \begin{aligned}
        & \iota_4\mapsto -\iota_4,\quad \iota_7\mapsto -\iota_7  & &\implies E_{\lambda,\epsilon, \delta}\xrightarrow[]{\simeq}  E_{-\lambda, \epsilon,\epsilon -\delta}, \\
       &\iota_{4} \mapsto \iota_{4}, \quad \iota_{7}\mapsto \iota_{7} + a\iota_{4}\circ \nu & &\implies E_{\lambda,\epsilon, \delta}\xrightarrow[]{\simeq}  E_{\lambda, \epsilon + (\lambda +2)a,\delta}, \nonumber \\
       &\iota_{4} \mapsto \iota_{4}, \quad   \iota_{7}\mapsto \iota_{7} + b\iota_{4}\circ \nu' & &\implies E_{\lambda,\epsilon, \delta}\xrightarrow[]{\simeq}  E_{\lambda, \epsilon-4b ,\delta + (\lambda+ 1)b}. \nonumber 
    \end{aligned}
    \end{myeq}
This leads us to homotopy equivalences between $E_{\lambda, \epsilon, \delta}$'s depending on the choice of $\lambda \in \pi_{10}(S^7) \cong \Z/{24}$. Table \ref{table:homotopy equivqlence of E} lists the different homotopy types in $\PP \DD^{11}_{4,7}$ of rank $1$.

\begin{table}[htb]
\centering
\begin{tabular}{|c|c|c|}
\hline
$\lambda$  & $\#E_{\lambda, \epsilon, \delta}$'s & $E_{\lambda, \epsilon, \delta}$'s \\ \hline 
\hline
$0$ & $2$ & $E_{0,0,0}$, \quad $E_{0,1,0}$\\
\hline
$1$ & $3$ & $E_{1,0,0}$, \quad $E_{1,1,0}$, \quad $E_{1,2,0}$\\ 
\hline
 \multirow{1}{.5em}{$2$} & {$12$} &  $E_{2,0,0}$, \quad $E_{2,1,0}$, \quad $E_{2,2,0}$, \quad $E_{2,3,0}$, \quad $E_{2,0,1}$, \quad $E_{2,1,1}$, \\& & $E_{2,2,1}$, \quad $E_{2,3,1}$, \quad  $E_{2,0,2}$, \quad $E_{2,1,2}$, \quad $E_{2,2,2}$, \quad $E_{2,3,2}$\\ 
\hline
$3$ & $1$ & $E_{3,0,0}$\\
\hline
$4$ & $6$ & $E_{4,0,0}$, \quad $E_{4,1,0}$, \quad $E_{4,2,0}$, \quad $E_{4,3,0}$, \quad $E_{4,4,0}$, \quad $E_{4,5,0}$\\ 
\hline
$5$ & $3$ & $E_{5,0,0}$, \quad $E_{5,0,1}$, \quad $E_{5,0,2}$ \\
\hline
$6$ & $4$ & $E_{6,0,0}$, \quad $E_{6,1,0}$, \quad $E_{6,2,0}$, \quad $E_{6,3,0}$\\ 
\hline
$7$ & $3$ & $E_{7,0,0}$, \quad $E_{7,1,0}$, \quad $E_{7,2,0}$\\
\hline
$8$ & $6$ & $E_{8,0,0}$, \quad $E_{8,1,0}$, \quad $E_{8,0,1}$, \quad $E_{8,1,1}$, \quad $E_{8,0,2}$, \quad $E_{8,1,2}$\\
\hline
$9$ & $1$ & $E_{9,0,0}$\\
\hline
\multirow{1}{1em}{$10$} & $12$ & $E_{10,0,0}$, \quad $E_{10,1,0}$, \quad $E_{10,2,0}$, \quad $E_{10,3,0}$, \quad $E_{10,4,0}$, \quad $E_{10,5,0}$, \\& & $E_{10,6,0}$, \quad $E_{10,7,0}$, \quad $E_{10,8,0}$, \quad $E_{10,9,0}$, \quad $E_{10,10,0}$, \quad $E_{10,11,0}$\\ 
\hline
$11$ & $3$ & $E_{11,0,0}$, \quad $E_{11,0,1}$, \quad $E_{11,0,2}$\\
\hline
$12$ & $2$ & $E_{12,0,0}$, \quad $E_{12,1,0}$\\
\hline
\end{tabular}
\caption{Homotopy equivalence classes of $E_{\lambda, \epsilon, \delta}.$}
\label{table:homotopy equivqlence of E}
\end{table}

\end{mysubsection}

\begin{mysubsection}{A simplification of the attaching map}
We simplify and reduce the attaching map of the top cell of $E$.

\begin{prop}\label{Prop_attaching map of E} Let $E \in \PP\DD_{4,7}^{11}$ with $\Rank(E)=k-1$.
The attaching map $\phi$ of the top cell of $E$ as in \eqref{Eq_define phi lambda epsilon delta} can be reduced, up to homotopy, to the following form 
$$\phi= \sum_{i=1}^{k-1}[\iota_4^i,\iota_7^i]+ \sum_{i=1}^{k-1}\lambda_i \iota_7^i\circ \nu_{(7)}+ \sum_{i=1}^{k-1}s_i\nu_i\circ \nu_{(7)}+ \sum_{i=1}^{k-1}r_i\nu_i'\circ \nu_{(7)}.$$
\end{prop}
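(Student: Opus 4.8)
The plan is to compute the homotopy groups $\pi_{10}$ of the wedge $\bigl(S^4 \vee S^7\bigr)^{\vee(k-1)}$ explicitly, and then use the self-homotopy-equivalences of this wedge to normalize the attaching map $\phi$ into the claimed form. Since $E \in \PP\DD_{4,7}^{11}$ is a Poincar\'e duality complex with $E \setminus \{pt\} \simeq \bigl(S^4 \vee S^7\bigr)^{\vee(k-1)}$, its homotopy type is determined by the attaching map $\phi \in \pi_{10}\bigl((S^4 \vee S^7)^{\vee(k-1)}\bigr)$ of the top cell. First I would decompose this group using the Hilton--Milnor theorem, which expresses $\pi_{10}$ of a wedge of spheres as a direct sum indexed by basic products of the generators $\iota_4^i, \iota_7^i$ (for $1 \le i \le k-1$), composed with homotopy groups of the relevant spheres.

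The key structural input is Poincar\'e duality. The intersection form on $H_4 \times H_7 \to H_{11}$ must be unimodular (indeed $E$ is a PD complex), and this forces the coefficient of each Whitehead product $[\iota_4^i, \iota_7^i]$ to be a unit, while the ``mixed'' Whitehead products $[\iota_4^i, \iota_7^j]$ for $i \ne j$ and the products $[\iota_4^i, \iota_4^j]$ must be handled by a change of basis. Concretely I would argue that, after a unimodular change of basis in $\pi_4$ and $\pi_7$ (realized by self-equivalences of the wedge), the $H_4$--$H_7$ pairing matrix becomes the identity, so that $\phi$ contains exactly the diagonal sum $\sum_i [\iota_4^i, \iota_7^i]$ and no off-diagonal Whitehead products survive. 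The remaining terms of $\phi$ then live in the summands coming from $\pi_{10}(S^4_i)$ and $\pi_{10}(S^7_i)$: these are precisely $\lambda_i\,\iota_7^i \circ \nu_{(7)}$ (from $\pi_{10}(S^7) \cong \Z/24$), together with $s_i\,\nu_i \circ \nu_{(7)}$ and $r_i\,\nu_i' \circ \nu_{(7)}$ (from $\pi_{10}(S^4)$, using $x = \nu \circ \nu_7$ and $y = \nu' \circ \nu_7$ as in the rank-one case), matching the stated form.

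The reduction to eliminate the off-diagonal and higher-order terms proceeds by applying the self-equivalences of $(S^4 \vee S^7)^{\vee(k-1)}$ that extend the rank-one equivalences recorded in \eqref{Eq_homotopy equivalences for rank one case}. The relevant moves are $\iota_7^i \mapsto \iota_7^i + (\text{terms in } \iota_4^j \circ \nu, \iota_4^j \circ \nu')$ and basis changes mixing the $\iota_4^i$ among themselves; each such move alters the coefficients $\lambda_i, s_i, r_i$ and clears the cross terms $[\iota_4^i, \iota_4^j]$, $[\iota_4^i, \iota_7^j]$ and any iterated triple Whitehead products appearing in the Hilton decomposition. I would check that the triple products $[\iota_4^i,[\iota_4^j,\iota_7^l]]$ and similar land in a range (dimension $> 10$ after accounting for connectivity) where they vanish or can be absorbed, so they do not contribute.

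\emph{The main obstacle} I anticipate is the bookkeeping of the Hilton--Milnor decomposition: one must verify that every basic product of weight $\ge 2$ other than the $[\iota_4^i,\iota_7^i]$ either contributes to a homotopy group that is zero in degree $10$, or can be cleared by an explicit self-equivalence without reintroducing terms already normalized. In particular the interaction between clearing the $[\iota_4^i,\iota_4^j]$ Whitehead products (which live in $\pi_7$ of the wedge, composed into $\pi_{10}$ via $\nu_{(7)}$) and simultaneously fixing the diagonal pairing requires care, since a basis change in $\pi_4$ used to diagonalize the intersection form will also act on these Whitehead-product terms. I would organize the argument so that the pairing is diagonalized first (using Poincar\'e duality and the invertibility of the intersection matrix), and only afterward apply the $\iota_7$-translations to absorb the remaining $\nu$- and $\nu'$-decorated terms into the coefficients $\lambda_i, s_i, r_i$.
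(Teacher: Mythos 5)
Your overall strategy (Hilton--Milnor decomposition, diagonalizing the unimodular $H_4$--$H_7$ pairing so that only the diagonal products $\sum_i[\iota_4^i,\iota_7^i]$ survive, then absorbing the $[\iota_4^i,\iota_4^j]\circ\nu_7$ and $[\iota_4^i,\iota_4^j]\circ\nu_7'$ terms by self-equivalences of the wedge) is the same as the paper's. However, there is a genuine gap in your treatment of the triple Whitehead products. The weight-three basic products built from three of the $\iota_4^i$, namely $[\iota_4^i,[\iota_4^j,\iota_4^\ell]]$, land on a sphere of dimension $4+4+4-2=10$ and therefore contribute $\pi_{10}(S^{10})\cong\Z$ summands to the Hilton--Milnor decomposition of $\pi_{10}\big((S^4\vee S^7)^{\vee(k-1)}\big)$. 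They do not lie in dimension $>10$ (only the products you actually name, $[\iota_4^i,[\iota_4^j,\iota_7^\ell]]$, do, being in $\pi_{13}$), and they are not reached by the moves you list ($\iota_7^i\mapsto\iota_7^i+$ multiples of $\iota_4^j\circ\nu$ and $\iota_4^j\circ\nu'$, together with basis changes among the $\iota_4^i$): clearing them would require adding honest Whitehead products $[\iota_4^j,\iota_4^\ell]\in\pi_7$ to the $\iota_7^i$, which then feeds back into the other terms of the normal form. As written, these integral summands are unaccounted for.

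The paper rules them out by a different mechanism: it composes the attaching map with the inclusion $\tilde f\colon(S^4\vee S^7)^{\vee(k-1)}\to E$ and with $\rho\colon\pi_{10}(E)\to\pi_9(\Omega E)\to H_9(\Omega E)\cong\big(T(\tilde\eta_1,\dots,\tilde\gamma_{k-1})/(\sum_i[\tilde\eta_i,\tilde\gamma_i])\big)$ in degree $9$. The image of the attaching map under $\rho$ is zero, while the weight-three Lie monomials in the $\tilde\eta_i$ inject into this quotient tensor algebra, so their coefficients in $\phi$ must vanish. You need either this loop-homology argument or an explicit verification that the weight-three brackets can be absorbed by self-equivalences of the form $\iota_7^j\mapsto\iota_7^j+\sum c_{i\ell}[\iota_4^i,\iota_4^\ell]$ without disturbing the already-normalized terms; neither is supplied. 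The remainder of your outline (Poincar\'e duality forcing the unimodular pairing and killing $[\iota_4^i,\iota_7^j]$ for $i\neq j$, and the identification of the surviving summands with the coefficients $\lambda_i$, $s_i$, $r_i$) is consistent with the paper's proof.
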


\begin{proof}
    By Hilton-Milnor decomposition, we have the following equivalence 
    \begin{multline*}
         \pi_{10}((S^4\vee S^7)^{k-1})\cong  \pi_{10}(S^4) ^{\oplus (k-1)} \oplus \pi_{10}(S^7)^{\oplus (k-1)} \oplus \pi_{10}(S^7) ^{\oplus {k-1\choose 2}} \oplus \\
         \pi_{10}(S^{10})^{\oplus {(k-1)\times (k-1)}} \oplus \pi_{10}(S^{10}) ^{\oplus {k-1 \choose 3}}.
    \end{multline*}
We choose $\eta_1,\dots, \eta_{k-1}\in \pi_4(E)$ and $\gamma_1,\dots, \gamma_{k-1}\in \pi_7(E)$ such that they correspond to the homology generators, say $\tilde{\eta}_1,\dots, \tilde{\eta}_{k-1}\in H_4(E)$ and  $\tilde{\gamma}_1,\dots, \tilde{\gamma}_{k-1}\in H_7(E)$ such that 
\begin{myeq}\label{Eq_cup product formula}
\tilde{\eta}_i^* \cup \tilde{\gamma}_j^*= \begin{cases}
    1 \quad \text{if } i=j,\\
    0 \quad \text{if } i \neq j.
\end{cases} \end{myeq}
Let $\tilde{f} \colon (S^4\vee S^7)^{\vee k-1} \to E$ be the inclusion which sends $\iota_4^i \mapsto \eta_i$ and $\iota_7^i \mapsto \gamma_i$ for $1 \leq i \leq k-1$.
Then $\tilde{f}\circ \phi\in\pi_{10}(E) $ whose image under the map $\rho\colon \pi_{10}(E)\rightarrow \pi_9(\Omega E)\rightarrow H_9(\Omega E)$ is $0$ in the tensor algebra $T(\tilde{\eta}_1,\dots, \tilde{\eta}_{k-1}, \tilde{\gamma}_1,\dots, \tilde{\gamma}_{k-1})/(\sum_{i=1}^{k-1}[\tilde{\eta}_i,\tilde{\gamma}_i])$.

The attaching map $\phi$ may contain triple Whitehead products as $[\iota^4_i,[\iota^4_j,\iota^4_{\ell}]]$, the Whitehead products of the form $[\iota_i^4,\iota_j^7]$ for $i\neq j$, the terms involving $[\iota_i^4,\iota_j^4]\circ \nu_7$ and $[\iota_i^4,\iota_j^4]\circ \nu_7'$. 
The triple Whitehead product maps injectively to the loop homology and hence they can not occur in the attaching map. The cup product formula in \eqref{Eq_cup product formula} implies that there is no Whitehead product of the form $[\iota_i^4,\iota_j^7]$ for $i\neq j$.
If $[\iota_i^4,\iota_j^4]\circ \nu_7$ and $[\iota_i^4,\iota_j^4]\circ \nu_7'$ appear in the attaching map, we update the map $\tilde{f}$ by appropriately sending $\iota_i^7 \mapsto \gamma_i-\eta_j\circ\nu'$, $\iota_i^7 \mapsto \gamma_i-\eta_j\circ\nu$ and $\iota_j^7 \mapsto \gamma_j- [\eta_i,\eta_j]$ to get the desired form of the attaching map.  
\end{proof}

We note that the composition is given by $$S^{10} \to (S^4 \vee S^7)^{\vee k-1} \to (S^7)^{\vee k-1}$$ which is an element of $(\pi_{10} S^7)^{\oplus k-1} \cong (\Z/{24} \{\nu\})^{\oplus k-1}$. This can be calculated using the real $e$-invariant, see \cite{Ada66}. We use this to reduce Proposition \ref{Prop_attaching map of E} to the case $\lambda_i=0$ for $i \leq k-2$.

\begin{prop}\label{Prop_attaching map phi}
Let $E \in \PP\DD^{11}_{4,7}$ and $\Rank(E)=k-1$. Then the attaching map $\phi$  of the top cell of $E$ can be reduced to the following form 
\begin{myeq}\label{Eq_reduced phi 1}
\phi= \sum_{i=1}^{k-1}[\iota_4^i,\iota_7^i]+ \lambda \iota_7^{k-1}\circ \nu_{(7)}+ \sum_{i=1}^{k-1}\epsilon_i\nu_i\circ \nu_{(7)}+ \sum_{i=1}^{k-1}\delta_i\nu_i'\circ \nu_{(7)}.
\end{myeq}
As a consequence $E \simeq  \#_{i=1}^{k-2} E_{0, \epsilon_i, \delta_i} \#  E_{\lambda, \epsilon_{k-1} , \delta_{k-1}}$.
\end{prop}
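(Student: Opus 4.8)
The plan is to start from the normal form of Proposition \ref{Prop_attaching map of E} and kill all but the last of the coefficients $\lambda_i$ by a basis change that respects the Poincar\'e pairing, after which the attaching map visibly splits as a sum of rank-one attaching maps.

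First I would isolate the meaning of the $\lambda_i$. Composing $\phi$ with the collapse $c\colon (S^4\vee S^7)^{\vee k-1}\to (S^7)^{\vee k-1}$ annihilates every Whitehead product $[\iota_4^i,\iota_7^i]$ and every term $\nu_i\circ\nu_{(7)}=\iota_4^i\circ\nu\circ\nu_{(7)}$ and $\nu_i'\circ\nu_{(7)}$, since each of these factors through one of the $S^4$-summands. Hence $c\circ\phi=\sum_i\lambda_i(\iota_7^i\circ\nu_{(7)})$ lives in $(\pi_{10}S^7)^{\oplus k-1}\cong(\Z/24)^{\oplus k-1}$, so $(\lambda_1,\dots,\lambda_{k-1})$ is precisely the stable invariant computed by the real $e$-invariant in the remark preceding the statement; in particular it is unaffected by any self-equivalence that alters the $\iota_7^i$ only by classes pulled back from the $S^4$-factors.

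Next I would act by a self-homotopy equivalence $g$ of the wedge with $g_\ast\iota_4^i=\sum_jA_{ij}\iota_4^j$ and $g_\ast\iota_7^i=\sum_jB_{ij}\iota_7^j$, where $A,B\in GL_{k-1}(\Z)$, replacing $\phi$ by the homotopic attaching map $g\circ\phi$. Since postcomposition is additive and Whitehead products are natural under it, $g$ sends $\sum_i[\iota_4^i,\iota_7^i]$ to $\sum_{j,l}(A^TB)_{jl}[\iota_4^j,\iota_7^l]$, so taking $B=(A^T)^{-1}$ preserves the hyperbolic term and keeps $g\circ\phi$ in the admissible shape. Because precomposition with the suspension $\nu_{(7)}=\Sigma^3\nu$ is additive (a suspension is a co-$H$-map), the $\lambda$-vector transforms by $\lambda\mapsto B^T\lambda$. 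The only delicate point is that precomposition of $\sum_jA_{ij}\iota_4^j$ with the unstable class $x=\nu\circ\nu_{(7)}$ on the $S^4$-summands is not additive; by the Hilton--Hopf distributivity law the error terms have the shape $[\iota_4^j,\iota_4^{j'}]\circ\nu_{(7)}$, which are exactly the Whitehead-product terms shown removable in the proof of Proposition \ref{Prop_attaching map of E} by updating the inclusion $\tilde f$. Removing them alters the $\iota_7^i$ only by $S^4$-pulled-back classes and so does not disturb the $\lambda$-vector. Since $GL_{k-1}(\Z)\twoheadrightarrow GL_{k-1}(\Z/24)$, I may choose $B$ with $B^T(\lambda_1,\dots,\lambda_{k-1})^T\equiv(0,\dots,0,\lambda)\pmod{24}$, producing \eqref{Eq_reduced phi 1} with $\lambda_i=0$ for $i\le k-2$ and relabelled coefficients $\epsilon_i,\delta_i$.

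Finally, for the consequence I would read off that in \eqref{Eq_reduced phi 1} the $i$-th block of $\phi$, namely $[\iota_4^i,\iota_7^i]+\lambda_i(\iota_7^i\circ\nu_{(7)})+\epsilon_i(\nu_i\circ\nu_{(7)})+\delta_i(\nu_i'\circ\nu_{(7)})$, is supported on the single factor $S^4\vee S^7$ and equals the rank-one attaching map $\phi_{\lambda_i,\epsilon_i,\delta_i}$ (with $\lambda_i=0$ for $i\le k-2$ and $\lambda_{k-1}=\lambda$). As the punctured complement of a connected sum of $11$-dimensional Poincar\'e duality complexes is the wedge of the punctured summands and the top cell is attached by the sum of the individual attaching maps, this block decomposition identifies $E$ with $\#_{i=1}^{k-2}E_{0,\epsilon_i,\delta_i}\#E_{\lambda,\epsilon_{k-1},\delta_{k-1}}$. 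I expect the main obstacle to be the third paragraph: controlling the terms created by the non-additivity of precomposition with the unstable class $x$ and confirming that they are precisely the removable Whitehead-product terms of Proposition \ref{Prop_attaching map of E}, while verifying that the constraint $B=(A^T)^{-1}$ still leaves the full group $GL_{k-1}(\Z/24)$ available (via $B^T=A^{-1}$) to concentrate the $\lambda$-vector in a single coordinate.
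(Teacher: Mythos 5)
Your proposal is correct and follows essentially the same route as the paper: both concentrate the vector $(\lambda_1,\dots,\lambda_{k-1})$ (read off stably, in the paper via the real $e$-invariant of $r_i\circ\phi$ and in your version via the collapse to $(S^7)^{\vee k-1}$, which compute the same thing) into the last coordinate by a $GL_{k-1}(\Z)$ change of the degree-$7$ generators with the contragredient change in degree $4$ to preserve the hyperbolic Whitehead term. You merely make explicit some details the paper leaves implicit, namely the Hilton--Hopf correction terms $[\iota_4^j,\iota_4^{j'}]\circ\nu_{(7)}$ (removable as in Proposition \ref{Prop_attaching map of E}) and the identification of the block-diagonal attaching map with a connected sum.
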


\begin{proof}
Let $\tau \colon H^{7}(E)\cong\mathbb{Z}\{\tilde{\gamma}_1^*,\tilde{\gamma}_2^*,\dots,\tilde{\gamma}_{k-1}^*\} \rightarrow \mathbb{Z}/24 $ be the linear map defined by 
$$\tau(\tilde{\gamma}_i^*)= e(r_i\circ \phi),  \quad \quad \text{for } 1\leq i\leq k-1,$$ where $e$ denotes for the real $e$-invariant and $r_i:(S^4 \vee S^7)^{\vee k-1} \to S^7$ is the retraction onto the $i$-th factor. 
Let $\tilde{\tau}\colon H^{7}(E)\rightarrow \mathbb{Z}$ be the lift of $\tau$ and $\lambda= \gcd (\tilde{\tau}(\tilde{\gamma}_1^*),\dots,\tilde{\tau}(\tilde{\gamma}_{k-1}^*)$.
Then we can change the generators $\tilde{\gamma}_1,\dots, \tilde{\gamma}_{k-1}$ such that $\tilde{\tau}(\tilde{\gamma}_i)=0$ for $1 \leq i< k-1$  and  $\tilde{\tau}(\tilde{\gamma}_{k-1})= \lambda.$
So, for a suitable choice of dual bases we can have $\phi$ as in \eqref{Eq_reduced phi 1}.
% \begin{myeq}
% \phi= \sum_{i=1}^{k-1}[\iota_{4}^{i}, \iota_{7}^{i}] + \lambda \iota_{7}^{k-1}\circ \nu_{(7)}+ \sum_{i=1}^{k-1} a_ix_i+ \sum_{i=1}^{k-1}b_iy_i \end{myeq}
\end{proof}

\end{mysubsection}

\begin{mysubsection}{A general classification up to homotopy}
We now proceed to the classification of elements in $\PP\DD^{11}_{4,7}$. In the connected sum $E_{\lambda_1, \epsilon_1, \delta_1} \# E_{\lambda_2, \epsilon_2, \delta_2}$, we transform $\alpha_1= \alpha_1' + \alpha_2'$ and $\alpha_2= \alpha_2'$ by $A=\begin{pmatrix}
    1 & 1\\ 0& 1
\end{pmatrix}$. If $2|\lambda_2$, then we transform 
\[\beta_1=  \beta_1'-\epsilon_1\nu_2- \frac{\lambda_2\epsilon_1}{2}\nu_2' , \quad \beta_2 = -\beta_1'+\beta_2'-\epsilon_1[\alpha_1',\alpha_2'].\]
  Hence by the following expression
\begin{align*}
        &[\alpha_1,\beta_1]+[\alpha_2,\beta_2]+ \lambda_1\beta_1\circ\nu_{(7)}+ \lambda_2\beta_2\circ \nu_{7}+ \epsilon_1 x_1 + \epsilon_2 x_2+\delta_1 y_1+ \delta_2 y_2 \\
        %= &[\alpha_1'+\alpha_2',\beta_1'-\epsilon_1\nu_2- \frac{\lambda_2\epsilon_1}{2}\nu_{2}']+ [\alpha_2',-\beta_1'+\beta_2'-\epsilon_1[\alpha_1',\alpha_2']] + \lambda_1(\beta_1'-\epsilon_1\nu_2- \frac{\lambda_2\epsilon_1}{2}\nu_{2}')\circ \nu_{(7)} \\ & \hspace{2cm} +\lambda_{2}(-\beta_1'+\beta_2'-\epsilon_1[\alpha_1',\alpha_2'])\circ \nu_{(7)} +\epsilon_{1}(\alpha_1'+  \alpha_2')\circ x +\epsilon_2 \alpha_{2}'\circ x \\
        %= &[\alpha_1',\beta_1']+ [\alpha_2',\beta_2'] -\epsilon_1[\alpha_1',\nu_2]- \frac{\lambda_2\epsilon_1}{2}[\alpha_1',\nu_2']-\epsilon_1[\alpha_2',\nu_2]         -\frac{\lambda_2\epsilon_1}{2}[\alpha_2',\nu_2'] -\epsilon_1[\alpha_2',[\alpha_2',\alpha_1']] \\        & \hspace{2cm} +(\lambda_1-\lambda_2)\beta_1'\circ\nu_{(7)}+ \lambda_2\beta_2'\circ\nu_{7}-\lambda_1\epsilon_1x_2-\frac{\lambda_1\lambda_2\epsilon_1}{2}y_2-\lambda_2\epsilon_1[\alpha_1',\alpha_2']\circ \nu_{(7)}\\        &\hspace{7cm}+\epsilon_1x_1 + (\epsilon_1+\epsilon_2)x_2+ \epsilon_1 [\alpha_1',\alpha_2']\circ \nu_{(7)}\\
        =&[\alpha_1',\beta_1']+ [\alpha_2',\beta_2'] +(\lambda_1-\lambda_2)\beta_1'\circ\nu_{(7)}+ \lambda_2\beta_2'\circ\nu_{7} +\epsilon_1x_1 +(-\epsilon_1+\epsilon_2 + 2\lambda_2\epsilon_1-\lambda_1\epsilon_1)x_2 \\ 
        &\hspace{7cm} \delta_1 y_1+ (\delta_1+\delta_2+\lambda_2\epsilon_1(1+\lambda_1))y_2,
\end{align*}
we conclude
\begin{myeq}\label{Eq_gen connected sum 2 divides lambda2}
    E_{\lambda_1,\epsilon_1,\delta_1} \# E_{\lambda_2,\epsilon_2,\delta_2} \simeq E_{\lambda_1-\lambda_2, \epsilon_1, \delta_1}\# E_{\lambda_2,\epsilon_2 + \epsilon_1(2 \lambda_2 - \lambda_1-1), \delta_1+\delta_2+ (1  + \lambda_1)\epsilon_1 \lambda_2} \quad \quad \text{when } 2|\lambda_2.
\end{myeq}

\begin{prop}
For any unit $a \in \Z/24$, we have homotopy equivalence
    $$E_{\lambda, \epsilon, \delta} \# E_{0,0,0}  \simeq \begin{cases} E_{a\lambda, \epsilon, \delta} \# E_{0,0,0} \quad &\text{if } a \equiv 1 \pmod{3}  \\ E_{-a\lambda, \epsilon, \delta} \# E_{0,0,0} \quad &\text{if } a \equiv 2 \pmod{3}.
    \end{cases}$$
\end{prop}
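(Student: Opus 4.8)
The plan is to reduce the statement to a single cleaner claim and then read off both cases: for every unit $u\in\Z/24$ with $u\equiv 1\pmod 3$, I will produce a homotopy equivalence $E_{\lambda,\epsilon,\delta}\#E_{0,0,0}\simeq E_{u\lambda,\epsilon,\delta}\#E_{0,0,0}$. Granting this, the proposition follows by taking $u=a$ when $a\equiv 1\pmod 3$ and $u=-a$ when $a\equiv 2\pmod 3$, since in either case $u\equiv 1\pmod 3$ and $u\lambda$ equals $a\lambda$, resp.\ $-a\lambda$. To set up, I would work on the rank-two wedge $W=(S^4\vee S^7)^{\vee 2}$ with its two hyperbolic pairs $\alpha_1,\alpha_2\in\pi_4(W)$ and $\beta_1,\beta_2\in\pi_7(W)$, writing $x_i=\alpha_i\circ x$ and $y_i=\alpha_i\circ y$. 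By Proposition~\ref{Prop_attaching map phi} the attaching map of $E_{\lambda,\epsilon,\delta}\#E_{0,0,0}$ is, after the reductions there, $\Phi=[\alpha_1,\beta_1]+[\alpha_2,\beta_2]+\lambda\,\beta_1\circ\nu_7+\epsilon\,x_1+\delta\,y_1$.

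The main construction is an explicit self-homotopy-equivalence $g$ of $W$. Since $\SL_2(\Z)\to \SL_2(\Z/24)$ is surjective and $\mathrm{diag}(u,u^{-1})$ has determinant $1$, I choose $Q\in \SL_2(\Z)$ with $Q\equiv\mathrm{diag}(u,u^{-1})\pmod{24}$, let $g$ act on the two $7$-spheres by $Q$ (so $g_*\beta_j=\sum_l Q_{lj}\beta_l$) and on the two $4$-spheres by $P=(Q^T)^{-1}\in\SL_2(\Z)$ (so $g_*\alpha_i=\sum_k P_{ki}\alpha_k$); such a $g$ is a homotopy equivalence because $P,Q\in \SL_2(\Z)$. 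As $PQ^T=I$ and Whitehead products are biadditive and natural, $g$ preserves $[\alpha_1,\beta_1]+[\alpha_2,\beta_2]$ exactly. Since $\nu_7$ is a suspension, $(-)\circ\nu_7$ is additive, so $g_*(\beta_1\circ\nu_7)=(Q_{11}\beta_1+Q_{21}\beta_2)\circ\nu_7=u\,(\beta_1\circ\nu_7)$, using $Q_{11}\equiv u$, $Q_{21}\equiv 0\pmod{24}$ and that $\beta_j\circ\nu_7$ has order dividing $24$. Thus $\lambda\mapsto u\lambda$, and no $\beta_2\circ\nu_7$ term appears.

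The crux is the effect of $g$ on $\epsilon$ and $\delta$, where the quadratic nature of composition with the Hopf map enters; this is the step I expect to be the main obstacle. Writing $P_{11}\equiv u^{-1}\equiv u\pmod{24}$, I would use the distributivity law for the Hopf map (Hopf invariant one), $(\xi+\eta)\circ\nu=\xi\circ\nu+\eta\circ\nu+[\xi,\eta]$, together with additivity of $(-)\circ\nu_7$, to get $(\xi+\eta)\circ x=\xi\circ x+\eta\circ x+[\xi,\eta]\circ\nu_7$. Feeding in the identity $[\alpha_1,\alpha_1]\circ\nu_7=\alpha_1\circ(2\nu+\nu')\circ\nu_7=2x_1+y_1$, one finds that the $x_1$-coefficient of $g_*(x_1)$ is $P_{11}+2\binom{P_{11}}{2}=P_{11}^2$ and its $y_1$-coefficient is $\binom{P_{11}}{2}$, with all remaining contributions lying in $x_2,y_2$ or in the removable class $[\alpha_1,\alpha_2]\circ\nu_7$. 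Since $\nu'$ has Hopf invariant zero, $(-)\circ y$ is additive and $g_*(y_1)=P_{11}y_1+P_{21}y_2$. Collecting terms, the $x_1$-coefficient of $g_*\Phi$ is $\epsilon\,P_{11}^2\equiv\epsilon\pmod{24}$ because $u^2\equiv 1$, while its $y_1$-coefficient is $\delta\,P_{11}+\epsilon\binom{P_{11}}{2}\pmod 3$; the term $\binom{P_{11}}{2}$ vanishes mod $3$ exactly when $P_{11}\not\equiv 2\pmod 3$, that is when $u\equiv 1\pmod 3$, and then $\delta\,P_{11}\equiv\delta$ as well. The $x_2,y_2,\beta_2\circ\nu_7$ contributions vanish since their coefficients are $\equiv 0$ modulo the relevant orders $24,3,24$, and the $[\alpha_1,\alpha_2]\circ\nu_7$ term is absorbed as in Proposition~\ref{Prop_attaching map of E}. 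Hence $g_*\Phi$ is the attaching map of $E_{u\lambda,\epsilon,\delta}\#E_{0,0,0}$, yielding the claim. The heart of the argument, and the delicate bookkeeping to get right, is precisely that the Hopf-invariant-one correction upgrades the naive linear scaling of $\epsilon$ into scaling by $u^2\equiv 1$ (so $\epsilon$ is preserved) while $\delta$ scales linearly by $u$; this asymmetry is exactly what forces $u\equiv 1\pmod 3$ and produces the $a\equiv 1,2\pmod 3$ dichotomy together with the sign.
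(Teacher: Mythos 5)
Your proof is correct, and at its core it is the same argument as the paper's: realize multiplication by the unit via an integral change of basis of the rank--two wedge congruent to the unit modulo $24$, and push the attaching map through the Hopf--invariant distributivity law $(\xi+\eta)\circ\nu=\xi\circ\nu+\eta\circ\nu+[\xi,\eta]$. The differences are in the bookkeeping, and they genuinely streamline the proof. The paper transforms the $\alpha_i$ by the matrix $\left(\begin{smallmatrix} a & b \\ 24 & a\end{smallmatrix}\right)$ with $a^2-24b=1$; since $b$ need not be divisible by $24$, cross terms survive and must be cancelled by correction summands inserted by hand into the images of the $\beta_i$, and even then the second factor comes out as $E_{0,-b^2\epsilon-b\lambda\epsilon,\,\ast}$, which is $E_{0,1,0}$ rather than $E_{0,0,0}$ when $\lambda$ is even and $\epsilon,b$ are odd, forcing the extra identity \eqref{Eq_connected sum with E010} as a patch. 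By instead lifting $\mathrm{diag}(u,u^{-1})$ to $Q\in\SL_2(\Z)$ with off-diagonal entries divisible by $24$ (possible by surjectivity of $\SL_2(\Z)\to\SL_2(\Z/24)$), you arrange that every unwanted term --- the $x_2$, $y_2$, $\beta_2\circ\nu_{7}$ contributions and also the $[\alpha_1,\alpha_2]\circ\nu_{7}$ term, whose coefficient $P_{11}P_{21}\equiv 0\pmod{24}$ simply kills it (no absorption argument needed) --- vanishes for order reasons, and your coefficients $P_{11}^2\epsilon\equiv\epsilon$ and $\delta P_{11}+\binom{P_{11}}{2}\epsilon\equiv\delta\pmod 3$ for $u\equiv 1\pmod 3$ are exactly right, using $u^2\equiv 1\pmod{24}$ for every unit. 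Your upfront reduction to $u=-a$ in the case $a\equiv 2\pmod 3$ likewise replaces the paper's implicit appeal to the rank-one equivalence $E_{\lambda,\epsilon,\delta}\simeq E_{-\lambda,\epsilon,\epsilon-\delta}$ to convert $E_{a\lambda,\epsilon,a\delta+\binom{a}{2}\epsilon}$ into $E_{-a\lambda,\epsilon,\delta}$. In short: same route, with choices that eliminate both the correction terms and the case analysis.
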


\begin{proof}
    We transform \begin{align*}%\label{Eq_mult transform}
        &\alpha_1= a \alpha_1' + b \alpha_2' & & \beta_1= a\beta_1'-24\beta_2'-b\epsilon \nu_2- 24b\epsilon\nu_1\\
        &\alpha_2= 24 \alpha_1' + a \alpha_2' & & \beta_2= -b\beta_1'+a\beta_2'-\epsilon b[\alpha_1',\alpha_2'] \nonumber
    \end{align*} with $a^2-24b=1$ and calculate $[\alpha_1,\beta_1]+[\alpha_2,\beta_2]+ \lambda \beta \circ\nu_{(7)} + \epsilon x_1 + \delta y_2$. This gives the homotopy equivalence 
     \begin{myeq}\label{Eq_mult by unit 1}
         E_{\lambda,\epsilon,\delta}\# E_{0,0,0} \\
         \simeq 
          E_{a\lambda, a^2\epsilon, a \delta  +{{a}\choose 2}\epsilon} \# E_{0, -b^2\epsilon -b\lambda \epsilon, b \delta + {{b}\choose 2}\epsilon}.
     \end{myeq}
     This proves the proposition except for when $\lambda \equiv 0 \pmod{2}$ and $\epsilon, b \equiv 1 \pmod{2}$. In that case, we have $E_{0,1,0}$ instead of $E_{0,0,0}$ in the second component of the connected sum. From \eqref{Eq_gen connected sum 2 divides lambda2}, we get
     \begin{myeq}\label{Eq_connected sum with E010}
     E_{\lambda,\epsilon,\delta} \# E_{0,1,0} \simeq E_{\lambda, \epsilon, \delta}\# E_{0,1 + \epsilon(- \lambda-1), \delta}
     \end{myeq}
     which we apply following the equivalence in \eqref{Eq_mult by unit 1} for $\lambda$ even and $\epsilon$,$b$ odd. This concludes the proof.
\end{proof}

The transformations above further simplify the possibilities of $E \in \PP\DD^{11}_{4,7}$ listed in Proposition \ref{Prop_attaching map phi}.

\begin{prop}\label{Prop_E in last two component}
    Let $E \in \PP\DD^{11}_{4,7}$ and $\Rank(E)=k-1$. Then 
$$
        E\simeq \#^{k-3}E_{0,0,0}\#E_{0,\hat{\epsilon},0}\#E_{\lambda,\epsilon,\delta}$$
    for some $\lambda,\epsilon \in \mathbb{Z}/24$, $\delta\in \Z/3$, $\hat{\epsilon} \in \Z/2$.
\end{prop}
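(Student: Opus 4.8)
The plan is to start from the normal form obtained in Proposition~\ref{Prop_attaching map phi}, namely
\[
E\simeq \#_{i=1}^{k-2} E_{0,\epsilon_i,\delta_i}\;\#\;E_{\lambda,\epsilon_{k-1},\delta_{k-1}},
\]
and then reduce the first $k-3$ of the $\lambda=0$ summands to $E_{0,0,0}$, absorbing all of their $\epsilon_i$ and $\delta_i$ data into the last two components. The two available tools are the general connected-sum relation \eqref{Eq_gen connected sum 2 divides lambda2}, valid whenever the second $\lambda$ is even (and here all the $E_{0,\epsilon_i,\delta_i}$ indeed have $\lambda=0$, which is even), together with the multiplication-by-a-unit equivalence \eqref{Eq_mult by unit 1} and the relation \eqref{Eq_connected sum with E010}. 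So the first step is to observe that for each summand $E_{0,\epsilon_i,\delta_i}$ we may apply these moves to a neighbouring $E_{0,0,0}$ (or to $E_{0,1,0}$, in the exceptional parity case) to clear $\delta_i$ and to reduce $\epsilon_i$ modulo~$2$.

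The key mechanism is the following. Applying \eqref{Eq_gen connected sum 2 divides lambda2} with $\lambda_1=\lambda_2=0$ gives
\[
E_{0,\epsilon_1,\delta_1}\#E_{0,\epsilon_2,\delta_2}\simeq E_{0,\epsilon_1,\delta_1}\#E_{0,\epsilon_2-\epsilon_1,\delta_1+\delta_2},
\]
which lets me transfer the $\epsilon$- and $\delta$-content between two $\lambda=0$ summands; iterating, I can consolidate all of the $\delta_i$ and all of the $\epsilon_i$ (for $i\le k-3$) into a single summand and then push whatever remains into the final $E_{\lambda,\epsilon_{k-1},\delta_{k-1}}$. Next, for a summand $E_{0,\epsilon_i,0}$ sitting beside a copy of $E_{0,0,0}$, I invoke \eqref{Eq_mult by unit 1}: choosing a unit $a$ with $a^2\equiv$ the appropriate value mod $24$ lets me scale $\epsilon_i$, and since the units square to $\{1,\dots\}$ one checks that $\epsilon_i$ can always be driven into $\{0,1\}$, i.e.\ reduced mod~$2$. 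The genuinely delicate point, already flagged in the previous proposition's proof, is the parity obstruction: when $\lambda\equiv 0\pmod 2$ and $\epsilon,b$ are odd, the second factor of \eqref{Eq_mult by unit 1} comes out as $E_{0,1,0}$ rather than $E_{0,0,0}$. In that situation I use \eqref{Eq_connected sum with E010} to re-absorb the stray $E_{0,1,0}$, leaving at most one surviving $E_{0,\hat\epsilon,0}$ with $\hat\epsilon\in\Z/2$.

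Carrying this out inductively on the number of $\lambda=0$ summands yields
\[
E\simeq \#^{k-3}E_{0,0,0}\#E_{0,\hat\epsilon,0}\#E_{\lambda,\epsilon,\delta},
\]
with $\lambda,\epsilon\in\Z/24$, $\delta\in\Z/3$, and $\hat\epsilon\in\Z/2$; the range $\delta\in\Z/3$ comes from the fact that $y=\nu'\circ\nu_7$ has order~$3$, while $\hat\epsilon\in\Z/2$ is exactly the residual parity that the unit-scaling moves cannot remove. I expect the main obstacle to be bookkeeping the interaction between the two parity phenomena—tracking when the $\epsilon$-reduction forces an $E_{0,1,0}$ to appear and verifying via \eqref{Eq_connected sum with E010} that these can always be merged so that no more than one $\hat\epsilon$ survives—rather than any single hard computation; each individual move is routine, but their order and the parity case analysis must be arranged consistently.
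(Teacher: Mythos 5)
There is a genuine gap: the two reduction mechanisms you propose for normalizing the $\lambda=0$ summands do not actually perform the reductions, and the tool the paper relies on --- the self-homotopy-equivalences of $S^4\vee S^7$ recorded in \eqref{Eq_homotopy equivalences for rank one case} --- never appears in your argument. Concretely, your plan to reduce $\epsilon_i$ modulo $2$ by "choosing a unit $a$ with $a^2\equiv$ the appropriate value mod $24$" in \eqref{Eq_mult by unit 1} cannot work: every unit of $\Z/24$ satisfies $a^2\equiv 1\pmod{24}$, so the first factor of \eqref{Eq_mult by unit 1} always has $a^2\epsilon=\epsilon$ and the unit-scaling move cannot change $\epsilon$ at all. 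Likewise, the relation \eqref{Eq_gen connected_sum 2 divides lambda2} with $\lambda_1=\lambda_2=0$ reads $E_{0,\epsilon_1,\delta_1}\#E_{0,\epsilon_2,\delta_2}\simeq E_{0,\epsilon_1,\delta_1}\#E_{0,\epsilon_2-\epsilon_1,\delta_1+\delta_2}$: it \emph{adds} $\delta_1$ to the second summand but leaves $\delta_1$ (and $\epsilon_1$) sitting in the first, so iterating it does not "consolidate all of the $\delta_i$ into a single summand," and it can never change the class of a single summand such as $E_{0,2,0}$ into $E_{0,0,0}$. (Your induction also lacks a base: there is no $E_{0,0,0}$ neighbour to borrow until one has already been produced.)

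What is missing is exactly what the paper uses. For a $\lambda=0$ summand, the self-equivalence $\iota_7\mapsto\iota_7+b\,\iota_4\circ\nu'$ gives $E_{0,\epsilon,\delta}\simeq E_{0,\epsilon-4b,\delta+b}$, so taking $b\equiv-\delta\pmod 3$ kills $\delta$; and $\iota_7\mapsto\iota_7+a\,\iota_4\circ\nu$ gives $E_{0,\epsilon,\delta}\simeq E_{0,\epsilon+2a,\delta}$, which reduces $\epsilon$ modulo $2$. These are rank-one moves requiring no neighbouring summand, and they turn every $E_{0,\epsilon_i,\delta_i}$ into $E_{0,\hat\epsilon_i,0}$ with $\hat\epsilon_i\in\Z/2$. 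Only after that does a connected-sum relation enter: \eqref{Eq_connected sum with E010} (equivalently \eqref{Eq_gen connected sum 2 divides lambda2}) yields $E_{0,1,0}\#E_{0,1,0}\simeq E_{0,1,0}\#E_{0,0,0}$, which merges the surviving $E_{0,1,0}$'s pairwise until at most one $\hat\epsilon$ remains. Your overall skeleton (start from Proposition \ref{Prop_attaching map phi}, normalize the $\lambda=0$ summands, absorb the residual parity) is the right one, and your identification of the single $\Z/2$ obstruction is correct, but as written the two central reduction steps would fail.
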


\begin{proof}
From Proposition \ref{Prop_attaching map phi}, we have the attaching map $\phi$ as in \eqref{Eq_reduced phi 1}. %Note that, from \eqref{Eq_gen connected sum 2 divides lambda2} we have $E_{0,1,0} \# E_{0,1,0} \simeq E_{0,0,0} \# E_{0,1,0}$. 
Repeated use of the homotopy equivalences in \eqref{Eq_homotopy equivalences for rank one case} and \eqref{Eq_connected sum with E010} gives the reduced form \eqref{Eq_reduced phi 1} as follows
    $$\phi= \sum_{i=1}^{k-1}[\iota_4^i,\iota_7^i]+ \lambda \iota_7^{k-1}\circ \nu_{(7)}+ \hat{\epsilon}x_{k-2}+ \epsilon x_{k-1} + \delta y_{k-1},$$
where $\lambda,\epsilon \in \mathbb{Z}/24$, $\delta\in \Z/3$, $\hat{\epsilon} \in \Z/2$.
\end{proof}

\begin{cor}\label{Cor_stable lambda and epsilon}
Let $E \in \PP\DD^{11}_{4,7}$ and $\Rank(E)=k-1$. Then stably we have 
\begin{myeq}\label{Eq_Stable attaching map E}
        \Sigma^{\infty}E\simeq \Sigma^{\infty}(S^4\vee S^7)^{\vee {k-2}}\vee \Sigma^{\infty}Cone(\lambda_s(E) \nu_{(11)} +\epsilon_s(E) x ),
\end{myeq}
where $x$ is the generator of the stable homotopy group $\pi_{14}(S^8)\cong \mathbb{Z}/2$ and $\lambda_s(E)\in \mathbb{Z}/24$, $\epsilon_s(E)\in \mathbb{Z}/2.$
Moreover, for $k-1\geq 2$ if $\lambda_s(E) \equiv 1 \pmod{2}$ in \eqref{Eq_Stable attaching map E}, then $\epsilon_s(E)=0\in \mathbb{Z}/2$.
\end{cor}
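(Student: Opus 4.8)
The plan is to stabilize the connected-sum decomposition of Proposition \ref{Prop_E in last two component} and simply read off the stable attaching map of the top cell. First I would record the stable classes of the four kinds of terms appearing in the attaching map $\phi$. Each Whitehead product $[\iota_4^i,\iota_7^i]$ suspends to $0$, so it is invisible stably. Since $[\iota_4,\iota_4]=2\nu+\nu'$ suspends to $0$, we get $\Sigma^\infty\nu'=-2\,\Sigma^\infty\nu$, and hence each term $y_i=\nu_i'\circ\nu_{(7)}$ stabilizes to $-2\nu^2=0$ in $\pi_6^s\cong\Z/2$; thus the parameters $\delta_i$ are invisible to the stable type. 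On the other hand $x_i=\nu_i\circ\nu_{(7)}$ stabilizes to the generator $\nu^2$ of $\pi_6^s$, while $\lambda\,\iota_7^{k-1}\circ\nu_{(7)}$ stabilizes to $\lambda\nu\in\pi_3^s\cong\Z/24$. Using that for Poincar\'e duality complexes the attaching map of a connected sum is the sum, inside the wedge of bottom cells, of the individual attaching maps, the decomposition $E\simeq \#^{k-3}E_{0,0,0}\#E_{0,\hat\epsilon,0}\#E_{\lambda,\epsilon,\delta}$ shows that the stabilized map $\Sigma^\infty\phi\colon S^{10}\to (S^4\vee S^7)^{\vee(k-1)}$ is nonzero only in three places: $\lambda\nu$ into one $S^7$-summand, and $\hat\epsilon\,\nu^2$, $\epsilon\,\nu^2$ into two of the $S^4$-summands.

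Next I would carry out the splitting. Every wedge summand not met by $\Sigma^\infty\phi$ splits off; this accounts for $k-2$ copies of $S^7$ (those from the $E_{0,0,0}$ factors together with the $S^7$ of the $E_{0,\hat\epsilon,0}$ factor) and for $k-3$ copies of $S^4$. To free one further $S^4$ I would apply an integral change of basis on $H_4$, which is a stable self-equivalence because over the sphere spectrum the $S^4$-summands mix among themselves only by $GL_{k-1}(\Z)$; this pushes the two $\nu^2$-contributions into a single $S^4$-summand, leaving a well-defined coefficient $\epsilon_s\in\Z/2$ (well defined since $\nu^2$ has order $2$). This isolates exactly one $S^4$, one $S^7$ and the top cell, yielding
\[
\Sigma^\infty E\simeq \Sigma^\infty (S^4\vee S^7)^{\vee(k-2)}\vee \Sigma^\infty Cone\big(\lambda_s\,\nu_{(11)}+\epsilon_s\, x\big),
\]
with $\lambda_s=\lambda \bmod 24$, which is \eqref{Eq_Stable attaching map E}.

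For the final (``moreover'') assertion, valid for $k-1\ge 2$ (the rank-one case being covered by Table \ref{table:homotopy equivqlence of E}), I would exploit the self-equivalences of \eqref{Eq_homotopy equivalences for rank one case} read stably. The map $\iota_7^{k-1}\mapsto \iota_7^{k-1}+a\,\iota_4^{i}\circ\nu$ is a stable automorphism of the wedge that fixes all $\nu$-coefficients but alters the $\nu^2$-coefficient in the $i$-th $S^4$-summand by $-a\lambda\pmod 2$ (this is the stabilization of $\epsilon\mapsto \epsilon+(\lambda+2)a$). When $\lambda_s$ is odd this change is $-a$, so choosing $a$ suitably with $i$ running over the two $S^4$-positions carrying a $\nu^2$-term annihilates both $\hat\epsilon$ and $\epsilon\bmod 2$; hence $\epsilon_s=0$. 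Since these self-equivalences leave the $\nu$-part and the already-split summands untouched, the normal form of the previous paragraph is preserved.

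I expect the main obstacle to be precisely this last step: one must verify that the self-equivalences used to normalize $\epsilon_s$ are genuine stable automorphisms compatible with the splitting (so that killing the $\nu^2$-content does not reintroduce it in a split-off summand), and that they act on the $\nu^2$-coefficients exactly as the unstable formulas of \eqref{Eq_homotopy equivalences for rank one case} predict. The two supporting technical points—that connected sum of Poincar\'e duality complexes induces addition of top-cell attaching maps after stabilization, and the James-type identity $\Sigma^\infty(\nu'\circ\nu)=0$ that renders $\delta$ invisible—are routine but must be pinned down, as they are what make the stable type depend only on $\lambda_s$ and $\epsilon_s$.
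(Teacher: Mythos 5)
Your proposal is correct and follows essentially the same route as the paper: stabilize the normal form of Proposition \ref{Prop_E in last two component} (Whitehead products and the $\nu'$-terms die, $x_i\mapsto\nu^2$, $\lambda\,\iota_7^{k-1}\circ\nu_7\mapsto\lambda\nu$), then merge the two $\nu^2$-coefficients into a single $S^4$-summand by a change of basis. Your explicit treatment of the ``moreover'' clause via the stable shearing $\iota_7^{k-1}\mapsto\iota_7^{k-1}+a\,\iota_4^i\circ\nu$, which shifts the $\nu^2$-coefficient by $\lambda a$ and hence kills it when $\lambda_s$ is odd, is exactly the intended mechanism, which the paper's own proof leaves implicit.
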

\begin{proof}
    From \eqref{Eq_reduced phi 1} and Proposition \ref{Prop_E in last two component},
    $$
        \Sigma^4\phi= \lambda\iota_{11}^{k-1}\circ \nu_{(11)}+ \hat{\epsilon}\iota_{8}^{k-2}\circ \nu_{(11)}+\epsilon\iota_8^{k-1}\circ \nu_{(11)}.$$
    Note that $\nu_{(11)}\in \pi^{s}_{3}(S^0)\simeq \mathbb{Z}/24$ and $\Sigma^4 (\nu\circ \nu_{(7)})\in \pi^{s}_{6}(S^0)\simeq \mathbb{Z}/2$ are the generators.
    If  $\hat{\epsilon}=0\in \mathbb{Z}/2$, the result readily follows. Otherwise,   if $\epsilon=1,\hat{\epsilon}=1\in \mathbb{Z}/2$,  we apply the transformation $\iota_8^{k-2}+\iota^{k-1}_{8}\mapsto \iota^{k-1}_8.$ If $\epsilon=0,\hat{\epsilon}=1\in \mathbb{Z}/2$, we interchange  $\iota_8^{k-2}$ and $\iota_8^{k-1}$ to deduce the result. 
\end{proof}

We see that the stable homotopy type of $E \in \PP\DD^{11}_{4,7}$ is determined by $\lambda_s(E)$ which is a divisor of $24$ and $\epsilon_s(E) \in \Z/2$.
The following theorem classifies the different homotopy types of $E$ given the values of $\lambda_s$ and $\epsilon_s$.

\begin{theorem}\label{Th_general classification of E}
    Let $E \in \PP\DD^{11}_{4,7}$ and $\Rank(E)= k-1$. Then depending on $\lambda_s=\lambda_s(E)$ and $\epsilon_s=\epsilon_s(E)$, the homotopy type of $E$ is determined by the following.
    \begin{enumerate}
        \item If $\lambda_s$ is even and $\epsilon_s=0$, then $$E \simeq \#^{k-2} E_{0,0,0} \# E_{\lambda_s, \epsilon, \delta} \quad \text{where } \epsilon \equiv \epsilon_s \pmod{2}.$$

        \item If $\lambda_s$ is even and $\epsilon_s=1$, then $$\begin{aligned}
            &E\simeq
            \#^{k-2} E_{0,0,0} \# E_{\lambda_s, \epsilon, \delta} \quad & &\text{where } \epsilon \equiv 1 \pmod{2}\\
          \text{or }  &E\simeq \#^{k-3} E_{0,0,0} \#E_{0,1,0} \# E_{\lambda_s, \epsilon, \delta} \quad & &\text{where } \epsilon \equiv 0 \pmod{2}.
        \end{aligned}
        $$

        \item If $\lambda_s$ is odd, then $$E \simeq \#^{k-2} E_{0,0,0} \# E_{\lambda_s, \epsilon, \delta} \quad \text{or} \quad E \simeq \#^{k-3} E_{0,0,0} \#E_{0,1,0} \# E_{\lambda_s, \epsilon, \delta}.$$
    \end{enumerate}
    Further given $\lambda_s$, the choices of $\epsilon$ and $\delta$ are those which are mentioned in Table \ref{table:homotopy equivqlence of E}.
\end{theorem}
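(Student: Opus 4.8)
The plan is to start from the normal form already established in Proposition~\ref{Prop_E in last two component}, namely $E \simeq \#^{k-3}E_{0,0,0}\#E_{0,\hat{\epsilon},0}\#E_{\lambda,\epsilon,\delta}$, and to normalize the remaining free parameters $\lambda,\hat{\epsilon},\epsilon,\delta$ using the three families of equivalences already at our disposal: the rank-one self-equivalences \eqref{Eq_homotopy equivalences for rank one case}, the unit-multiplication equivalence \eqref{Eq_mult by unit 1}, and the connected-sum formulas \eqref{Eq_gen connected sum 2 divides lambda2} and \eqref{Eq_connected sum with E010}. The whole argument is a bookkeeping exercise inside a fixed group of moves, organized so that the output lands in the representatives of Table~\ref{table:homotopy equivqlence of E}.

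First I would pin down the stable invariants. Suspending the attaching map of Proposition~\ref{Prop_E in last two component}, the Whitehead products and the $\delta y$-term die, $\lambda\iota_7^{k-1}\circ\nu_{(7)}$ contributes $\lambda\nu$, and both $x$-terms contribute $\nu^2$; by Corollary~\ref{Cor_stable lambda and epsilon} this identifies $\lambda_s(E)=\gcd(\lambda,24)$ as a divisor of $24$, while $\epsilon_s(E)\in\Z/2$ records the total $\nu^2$-content after the reduction of that corollary. Since every unit of $\Z/24$ is odd, multiplying $\lambda$ by a unit via \eqref{Eq_mult by unit 1} preserves parity and realizes every element of the orbit $\{u\lambda\}$; as $\gcd(\lambda,24)$ always lies in this orbit, I can replace $\lambda$ by $\lambda_s$ in the last component at the cost of the controlled shifts in $\epsilon,\delta$ tracked by \eqref{Eq_mult by unit 1}. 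This reduction consumes a free $E_{0,0,0}$ summand, present once $k-3\ge 0$; the boundary cases are absorbed by letting $E_{0,\hat{\epsilon},0}$ play that role when $\hat{\epsilon}=0$, or by reading the answer directly off Table~\ref{table:homotopy equivqlence of E} in rank one.

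Next I would dispose of the parameter $\hat{\epsilon}$, which is exactly where the three cases diverge. When $\lambda_s$ is even and $\epsilon_s=0$ the total $\nu^2$-content vanishes, so \eqref{Eq_gen connected sum 2 divides lambda2} (applicable since $2\mid\lambda_s$) lets me transfer the $\nu^2$ out of $E_{0,\hat{\epsilon},0}$ and then clear it, forcing $\hat{\epsilon}=0$ and $\epsilon$ even, which is case~(1). When $\lambda_s$ is even and $\epsilon_s=1$ the $\nu^2$-content must persist, and I would show it can be housed either inside the main component (making $\epsilon$ odd, with $E_{0,\hat{\epsilon},0}=E_{0,0,0}$) or kept as a separate summand $E_{0,1,0}$, giving the two alternatives of case~(2); the shifts of $\epsilon,\delta$ in passing between these forms are governed by \eqref{Eq_connected sum with E010}. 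When $\lambda_s$ is odd, Corollary~\ref{Cor_stable lambda and epsilon} forces $\epsilon_s=0$, but the unstable $\nu^2$ in the main component is no longer pinned down by parity, since the move $\iota_7\mapsto\iota_7+a\iota_4\circ\nu$ in \eqref{Eq_homotopy equivalences for rank one case} changes $\epsilon$ by $(\lambda+2)a$ with $\lambda+2$ odd; the only residual ambiguity is whether the extra $E_{0,1,0}$ summand can be absorbed, which produces the two forms of case~(3). Finally, with $\lambda$ fixed at $\lambda_s$ and the $\nu^2$-summand either removed or isolated, the admissible pairs $(\epsilon,\delta)$ for $E_{\lambda_s,\epsilon,\delta}$ are precisely the rank-one representatives, so the statement reduces to reading off Table~\ref{table:homotopy equivqlence of E} for the given $\lambda_s$. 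The main obstacle I anticipate is this middle step: tracking $\epsilon$ and $\delta$ simultaneously through the chain \eqref{Eq_gen connected sum 2 divides lambda2}, \eqref{Eq_mult by unit 1}, \eqref{Eq_connected sum with E010} while keeping the parity arithmetic of $\lambda_s$ consistent, and in particular verifying that in cases~(2) and~(3) the two listed forms genuinely exhaust the possibilities and cannot be merged further --- this is the delicate point separating a mere reduction to Table~\ref{table:homotopy equivqlence of E} from a complete and non-redundant list.
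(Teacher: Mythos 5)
There is a genuine gap: your proposal only carries out the \emph{reduction} direction --- putting $E$ into one of the listed normal forms using the moves \eqref{Eq_homotopy equivalences for rank one case}, \eqref{Eq_gen connected sum 2 divides lambda2}, \eqref{Eq_mult by unit 1}, \eqref{Eq_connected sum with E010} --- which is essentially already the content of Proposition \ref{Prop_E in last two component} together with Table \ref{table:homotopy equivqlence of E}. The actual content of Theorem \ref{Th_general classification of E} is the converse: that the listed forms are \emph{exhaustive and mutually non-equivalent}, i.e.\ that no further mergers among the normal forms are possible beyond the ones your moves produce. You explicitly flag this as ``the delicate point'' at the end, but you never supply an argument for it, and it cannot be obtained by composing the known equivalences: showing that two normal forms are \emph{not} homotopy equivalent requires controlling an \emph{arbitrary} homotopy equivalence between them, not just the ones in your fixed group of moves.

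This is precisely what the paper's proof does and your proposal omits. The paper takes $Y_1=\#^{k-2}E_{0,0,0}\#E_{\lambda,\epsilon,\delta}$ and $Y_2=\#^{k-2}E_{0,0,0}\#E_{\lambda,\epsilon',\delta'}$ with an arbitrary homotopy equivalence $f$, uses cellular approximation to restrict $f$ to the $7$-skeleton, shows via the stable homotopy type that the induced map on $\pi_7((S^7)^{\vee k-1})$ has a block triangular form reducible to shearing maps, writes out the general form of $f^{(7)}$ on $\pi_4$ and $\pi_7$, and then compares coefficients of $\eta_{k-1}\circ x$ and $\eta_{k-1}\circ y$ in $f^{(7)}\circ L(Y_1)\simeq L(Y_2)$ to derive $\epsilon'=(\lambda_s+2)b_{k-1}-4b_{k-1}'$ and $\delta'=(\lambda_s+1)b_{k-1}'$ --- exactly the rank-one relations. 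For the question of whether the $E_{0,1,0}$ summand can be absorbed, the paper derives the mod-$2$ constraint $\lambda_s b_{k-2}+\epsilon c_{k-2}^2+1\equiv\hat\epsilon$ from the coefficient of $\eta_{k-2}\circ x$, and in the odd-$\lambda_s$ case must additionally examine the coefficients of $[\eta_{k-1},[\eta_{k-2},\eta_{k-1}]]$ and $[\eta_{k-2},\eta_{k-1}]\circ\nu_{(7)}$ to pin down $b_{k-2}\equiv\epsilon c_{k-2}\pmod 2$ and conclude $\hat\epsilon=1$. None of this coefficient analysis appears in your proposal, and without it the statement that cases (2) and (3) split into exactly the two listed alternatives (and that the $(\epsilon,\delta)$ ambiguity is exactly that of Table \ref{table:homotopy equivqlence of E}) is unproved. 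Your reduction steps are fine as far as they go, but they establish only one inclusion of the classification.
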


\begin{proof}
    We write $Y_1=\#^{k-2} E_{0,0,0} \# E_{\lambda, \epsilon, \delta}$ and $Y_2=\#^{k-2} E_{0,0,0} \# E_{\lambda, \epsilon', \delta'}$, and let $Y_1 \stackrel{f}{\to} Y_2$ be a homotopy equivalence with homotopy inverse $g$. We show that in this case the pair $(\epsilon,\delta)$ is related to $(\epsilon',\delta')$ by the transformations \eqref{Eq_homotopy equivalences for rank one case}. There exists a unique (up to homotopy) factorization $(S^7)^{\vee k-1} \to (S^7 \vee S^4)^{\vee k-1} \to Y_2$ through cellular approximation $\tilde{f}$ as in the following diagram.
    $$\xymatrix{(S^7)^{\vee k-1} \ar[r] \ar[dr]_{\tilde{f}} \ar[drr]^{f_7} & \#^{k-2} E_{0,0,0} \# E_{\lambda, \epsilon, \delta} \ar[r]^f  & \#^{k-2} E_{0,0,0} \# E_{\lambda, \epsilon', \delta'}\\ &  (S^7 \vee S^4)^{\vee k-1} \ar[r]_{\rho} \ar@{-->}@/_1pc/[ur] & (S^7)^{\vee k-1} \ar[u]}$$
    Now we consider the composition $f_7\colon =\rho \circ \tilde{f}$ where $\rho$ is the projection map.
    From the stable homotopy type, we get an isomorphism $$\xymatrix{ f_7 \colon \stackbeloweq{\pi_7((S^7)^{\vee k-1})}{\Z\{\beta_1, \dots, \beta_{k-1}\}}  \ar[r]^{\cong} & \stackbeloweq{\pi_7((S^7)^{\vee k-1})}{\Z\{\gamma_1, \dots, \gamma_{k-1}\}}}$$
    where $f_7(\beta_{k-1})\equiv\gamma_{k-1}\pmod{24}$. Hence, the corresponding matrix of $f_7$ is 
$$\equiv \begin{pmatrix}
        & & & 0\\ & A & & \vdots\\ & & & 0 \\ * & \dots & * & 1\\ 
    \end{pmatrix} \pmod{24}$$ 
    for some $A \in GL_{k-2} (\Z)$. From the inverse homotopy equivalence $g \colon Y_2 \to Y_1$, we can construct a corresponding block matrix of $g_7$ similar to that of $f_7$ for some $B \in GL_{k-2}(\Z)$ $\pmod{24}$. 
    Through suitable pre-composition of $f_7$ and post-composition of $g_7$ we may assume that $A=B=I$ where $I$ is the identity matrix in $GL_{k-2} (\Z)$. Thus both $f_7$ and $g_7$ are composition of shearing maps with $\beta_{k-1} \mapsto \gamma_{k-1}$, that is, they are composition of maps associated to 
%$\begin{pmatrix}
%        1 & c_i \\
%        0 & 1
%    \end{pmatrix}$ 
$\beta_i \mapsto \gamma_i + c_i \gamma_{k-1}$ for some $c_i \in \Z$.

    %To conclude the proof, we now show that by shearing maps we do not obtain any new homotopy equivalence of $E_{0,0,0} \# E_{\lambda, \epsilon, \delta}$ where $E_{\lambda, \epsilon, \delta}$ belongs to Table \ref{table:homotopy equivqlence of E}. 
We now consider the map $f$ on the $7$-skeleton  
 $$\xymatrix{(S^4\vee S^7)^{\vee k-1} \ar[r]^{f^{(7)}} \ar[d]^{\vee_{i=1}^{k-1} (\alpha_i \vee \beta_i)}  & (S^4\vee S^7)^{\vee k-1} \ar[d]^{\vee_{i=1}^{k-1} (\eta_i \vee \gamma_i)}\\
Y_1 \ar[r]^f  &Y_2,}$$
which takes the form
    \begin{align*}
    &\beta_{i}\mapsto \gamma_i + c_i \gamma_{k-1} + \sum_{j=1}^{k-1} a_{i,j} \nu_j + \sum_{j=1}^{k-1} a_{i,j}'\nu_j'+ \sum_{\substack{j=1\\ \ell=1 \\ j \neq \ell }}^{\substack{j=k-1\\ \ell=k-1}} a_{i,j, \ell}[\eta_j,\eta_{\ell}], & & \alpha_i \mapsto \eta_i, \quad \text{for } 1 \leq i \leq k-2,\\
    &\beta_{k-1} \mapsto \gamma_{k-1} +\sum_{j=1}^{k-1} b_{j} \nu_j + \sum_{j=1}^{k-1} b_{j}'\nu_j'+ \sum_{\substack{j=1\\ \ell=1 \\ j \neq \ell }}^{\substack{j=k-1\\ \ell=k-1}} b_{j ,\ell}[\eta_j,\eta_{\ell}],  & & \alpha_{k-1} \mapsto \eta_{k-1}-\sum_{j=1}^{k-2} c_j \eta_j.
\end{align*}
As $f$ is a homotopy equivalence, we must have that the attaching map of the $11$-cell of $Y_1$ must be carried by $f^{(7)}$ to the attaching map of $Y_2$, that is, $f^{(7)} \circ L(Y_1) \simeq L(Y_2)$. We now look at the coefficients of $\eta_{k-1}\circ x$ and $\eta_{k-1}\circ y$ that arises in $f^{(7)}\circ L(Y_1)$ and note
that the only terms which contribute to these coefficients are 
\[f^{(7)}( [\alpha_{k-1},\beta_{k-1}]+\lambda_s \beta_{k-1}\circ \nu_{(7)}+  \epsilon \alpha_{k-1} \circ x+ \delta \alpha_{k-1} \circ y).\] 
We now deduce 
$$\epsilon'=(\lambda_s+2)b_{k-1}-4b'_{k-1} \quad \quad \text{and} \quad  \quad \delta'= (\lambda_s+1)b_{k-1}',$$
which verifies the result for complexes of the type $ \#^{k-2} E_{0,0,0} \# E_{\lambda, \epsilon, \delta}$.

For the remaining cases, we follow the same argument with $Y_1 = \#^{k-3} E_{0,0,0} \#E_{0,1,0} \# E_{\lambda_s, \epsilon, \delta}$ with $\epsilon$ even if $\lambda_s$ is, and $Y_2=\#^{k-3} E_{0,0,0} \#E_{0,\hat{\epsilon},0} \# E_{\lambda_s, \epsilon', \delta'}$, where $\hat{\epsilon}=0$ or $1$. 
Note that  the only terms which contribute to $\eta_{k-2}\circ x$ are 
\[f^{(7)}( [\alpha_{k-1},\beta_{k-1}]+[\alpha_{k-2},\beta_{k-2}]+\alpha_{k-2}\circ x + \lambda_s \beta_{k-1}\circ \nu_{(7)}+  \epsilon \alpha_{k-1} \circ x+ \delta \alpha_{k-1} \circ y).\] 
%Also note that for $x_{k-2}$ can arise form the transformations of $[\alpha_{k-2}, \beta_{k-2}]$, $[\alpha_{k-1}, \beta_{k-1}]$, $\lambda_s \beta_{k-1}\circ \nu_{(7)}$, $\hat{\epsilon} \alpha_{k-2} \circ x$ and $\epsilon \alpha_{k-1} \circ x$. The coefficient of $x_{k-2}$ is 
A direct computation implies
\begin{myeq}\label{Eq_coeff of x k-2}
\lambda_s b_{k-2}+\epsilon c_{k-2}^2+1\equiv  \hat{\epsilon}  \pmod{2}.
\end{myeq}
%$2a_{ii}-4a_{ii}'-2c_i b_i +4c_i b_i'+\lambda b_i+ \epsilon_{k-1} c_i^2 + \epsilon_i$.

First let $\lambda_s$ be even and $\epsilon\equiv 0\pmod{2}$. This implies $\hat{\epsilon}=1$. %This proves $(1)$ of the theorem. 
%Now let $\lambda_s$ be even and $\epsilon_s=1$. Then if $\epsilon \equiv 1 \pmod{2}$, using \eqref{Eq_connected sum with E010} we have $E \simeq \#^{k-2} E_{0,0,0} \# E_{\lambda_s, \epsilon, \delta}$. Otherwise if $\epsilon \equiv 0 \pmod{2}$, then $E \simeq \#^{k-3} E_{0,0,0} \#E_{0,1,0} \# E_{\lambda_s, \epsilon, \delta}$. Hence $(2)$ is proved.
Finally, let $\lambda_s$ is odd. We look at the coefficients of $[\eta_{k-1},[\eta_{k-2}, \eta_{k-1}]]$ and $[\eta_{k-2}, \eta_{k-1}] \circ \nu_{(7)}$ in $f^{(7)}\circ L(Y_1)$ $\pmod{2}$, which are $b_{k-2,k-1} +c_{k-2} b_{k-1} -a_{k-2, k-1}$ and $\lambda_s b_{k-2, k-1} + a_{k-2, k-1} + b_{k-2} -c_{k-2}b_{k-1}-\epsilon c_{k-2}$. Since both these coefficients are zero, we have $b_{k-2}\equiv\epsilon c_{k-2}\pmod{2}$. Using the relation \eqref{Eq_coeff of x k-2}, we observe that $\hat{\epsilon}=1$. The conditions on $\epsilon'$ and $\delta'$ are verified analogously as in the previous case. This completes the proof of the various implications in the theorem.
\end{proof}

\end{mysubsection}

\begin{mysubsection}{The loopspace homotopy type}
We study the loop space homotopy type of $E \in \PP\DD^{11}_{4,7}$ with $E^{(7)} \simeq (S^4 \vee S^7)^{\vee k-1}$ and show that the loop space homotopy is independent of the $\lambda, \epsilon$ and $\delta$ occurring in the attaching map $\phi$ of $E$.

\begin{theorem}
    The homotopy type of the loop space of $E$ is a weak product of loop spaces on spheres, and depends only on $k-1=\Rank (H_4(E))$. In particular, $$\Omega E \simeq \Omega (\#^{k-1} (S^4 \times S^7) ).$$
\end{theorem}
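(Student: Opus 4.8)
The plan is to exploit the fact, established in Corollary \ref{Cor_stable lambda and epsilon} and Proposition \ref{Prop_E in last two component}, that after looping the attaching map $\phi$ of the top cell becomes null-homotopic. Concretely, the cofibration $(S^4 \vee S^7)^{\vee k-1} \xrightarrow{\phi} S^{10} \to E$ gives, upon passing to $\Omega E$, a description of the loop space homology via the James/Bott-Samelson or Adams-Hilton machinery, and I would argue that the only obstruction to $\Omega E$ splitting as a weak product of loop spheres is the class of $\phi$ in the relevant loop-space homology, which vanishes. The first step is therefore to record the homology $H_*(\Omega E)$. Since $E^{(7)} = (S^4 \vee S^7)^{\vee k-1}$, one has $H_*(\Omega E^{(7)})$ equal to the tensor algebra $T(\tilde\eta_1,\dots,\tilde\eta_{k-1},\tilde\gamma_1,\dots,\tilde\gamma_{k-1})$ on generators in degrees $3$ and $6$, exactly as in the proof of Proposition \ref{Prop_attaching map of E}. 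Attaching the top cell along $\phi$ imposes the single relation $\sum_{i=1}^{k-1}[\tilde\eta_i,\tilde\gamma_i]=0$ coming from the Whitehead-product part of $\phi$ (via the map $\rho\colon \pi_{10}(E)\to H_9(\Omega E)$ already used), and the key point is that the remaining summands of $\phi$ (the $\lambda, \epsilon, \delta$ terms built from $\nu_{(7)}$ and $x,y$) are \emph{decomposable} in the loop homology and contribute nothing new to the relation.

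The second step is to compare this with the model space $N := \#^{k-1}(S^4 \times S^7)$. Its loop homology is likewise the tensor algebra on $2(k-1)$ generators in degrees $3,6$ modulo the single quadratic relation $\sum_i [\tilde\eta_i,\tilde\gamma_i]=0$ obtained from the intersection form, so $H_*(\Omega E)\cong H_*(\Omega N)$ as graded algebras. To upgrade this homology isomorphism to a homotopy equivalence, I would use a standard loop-space splitting result for Poincar\'e duality complexes of this shape: one constructs an explicit map from a weak product $\prod_j \Omega S^{n_j}$ (where the $S^{n_j}$ are the spheres appearing in a minimal Quillen/Hilton-Milnor decomposition of the free part) into $\Omega E$ by assembling the loops of the inclusions $S^4 \hookrightarrow E$, $S^7 \hookrightarrow E$ together with loops of the Whitehead products $[\iota_4^i,\iota_7^i]$, and checks that it induces an isomorphism on homology, hence is a homotopy equivalence since both sides are simply connected with finitely generated free homology in each degree. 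The same map works verbatim for $N$, giving $\Omega E \simeq \prod_j \Omega S^{n_j} \simeq \Omega N$.

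The main obstacle I anticipate is the second step: producing a genuine homotopy (not merely homological) equivalence $\Omega E \simeq \Omega N$, since a ring isomorphism of loop homologies does not by itself guarantee a homotopy equivalence of loop spaces. The cleanest route is to show that $\Sigma E \simeq \Sigma N$ \emph{stably}, or better that $E$ and $N$ have the same stable homotopy type after one suspension, and then invoke the fact that for these spaces the loop-space homotopy type is detected by the first suspension together with the single Whitehead relation; Corollary \ref{Cor_stable lambda and epsilon} already shows $\Sigma^\infty E$ depends only on $\lambda_s$ and $\epsilon_s$, and the $\lambda,\epsilon,\delta$ terms disappear after a single loop because multiplication by $\nu_{(7)}$ and by the class $x$ factors through elements that become decomposable (hence central and redundant) in $H_*(\Omega E)$. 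I would make this precise by verifying directly that the Samelson products of the generators $\tilde\eta_i,\tilde\gamma_i$ in $\Omega E$ satisfy exactly the relations of $\Omega N$, independent of $\lambda,\epsilon,\delta$, and then quote the relevant rigidity theorem (of James–Whitehead / Beben–Theriault type) to conclude that $\Omega E$ is the asserted weak product of loop spheres, which is in turn homotopy equivalent to $\Omega(\#^{k-1}(S^4\times S^7))$.
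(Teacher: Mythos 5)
Your plan is correct and follows essentially the same route as the paper: compute $H_*(\Omega E)$ as the tensor algebra on the degree $3$ and $6$ generators modulo the single quadratic relation $\sum_i[\tilde\eta_i,\tilde\gamma_i]$ (the paper does this via the cobar construction on the coalgebra $H_*(E)$, which makes the independence from $\lambda,\epsilon,\delta$ immediate, while you argue it from decomposability of those terms in loop homology --- the same point), and then realize the splitting by a map from a weak product of looped spheres inducing a homology isomorphism, which is exactly the Poincar\'e--Birkhoff--Witt argument of \cite[Proposition 3.6]{BaBa19} that the paper invokes. The ``obstacle'' you flag in upgrading the homology isomorphism to a homotopy equivalence is resolved precisely by the explicit Samelson-product map plus the homology Whitehead theorem, as you yourself propose.
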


\begin{proof}
    This follows from the arguments in \cite{BaBa18} and \cite{BaBa19}. More explicitly, we first compute the homology of $\Omega E$ via cobar construction, see \cite{Ada56}. In this case, $H_*(E)$ is a coalgebra which is quasi-isomorphic to $C_*(E)$, and hence we may compute the cobar construction of $H_*(E)$ and deduce as in \cite[Proposition 2.2]{BaBa19}
    $$H_*(\Omega E) \cong T(a_1, b_1, \dots, a_{k-1}, b_{k-1})/ (\sum [a_i, b_i])$$ where $\rho(\alpha_i)=a_i$ and $\rho(\beta_i)=b_i$ with $\rho$ defined as $$\rho \colon \pi_r (E) \cong \pi_{r-1} (\Omega E) \xrightarrow{Hur} H_{r-1}(\Omega E).$$
    We then note that $H_*(\Omega E)$ is the universal enveloping algebra of the graded Lie algebra $L(a_1, b_1, \dots, a_{k-1}, b_{k-1})/ (\sum [a_i, b_i])$ where $L$ is the free Lie algebra functor. Now we apply the Poincar{\'e}-Birkhoff-Witt theorem as in \cite[Proposition 3.6]{BaBa19} to deduce the result. 
    \end{proof}

\end{mysubsection}

\section{Stable homotopy type of the total space.}\label{sthtpyform}
In this section, we examine the possible stable homotopy types of the total space $E$ for a principal $SU(2)$-fibration over $M_k \in \PP\DD^8_3$. We relate this to the stable homotopy type of $M_k$. 

Let $f \colon M_k \to \HH P^{\infty}$ be a map such that $\pi_4(f)\colon \pi_{4}(M_k)\rightarrow \pi_{4}(\mathbb{H}P^{\infty})\cong \mathbb{Z}$ is surjective.
This ensures that the homotopy fibre $E(f)$ is $3$-connected and is a Poincar\'{e} duality complex of dimension $11$. One easily deduces
\[ H_i(E(f))  = \begin{cases} 
    \mathbb{Z} &\quad\text{}  i= 0,11   \\
    \mathbb{Z}^{\oplus{k-1}}&\quad\text{}  i= 4,7\\
    0&\quad\text{otherwise}
\end{cases}      \]
from the Serre spectral sequence associated to the fibration $S^3 \to E(f) \to M_k$.
We may now consider a minimal CW-complex structure on $E:=E(f)$ with $(k-1)$ $4$-cells, $(k-1)$ $7$-cells, and one $11$-cell, see \cite[Section 2.2]{Hat01}.
The $7$-th skeleton $E^{(7)}$ is, therefore, a pushout
\begin{myeq}\label{Eq_pushout diagram E7}
\xymatrix{
( S^{6})^{\vee{(k-1)}}\ar[r] \ar[d]_{\vee_{i=1}^{k-1}\phi_i} & ({D^{7}})^{\vee{(k-1)}} \ar[d] \\
(S^{4})^{\vee{(k-1)}}  \ar[r] & E^{(7)}}
\end{myeq}
We now observe that the $\phi_i$ are all $0$.

 \begin{prop}\label{Prop_phi i is 0}
 The maps $\phi_i \simeq 0$ for $1 \leq i \leq k-1$.
 \end{prop}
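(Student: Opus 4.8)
The plan is to show that the attaching maps $\phi_i \colon S^6 \to (S^4)^{\vee(k-1)}$ of the $4$-cells appearing in the minimal CW structure on $E^{(7)}$ are all null-homotopic, so that the $7$-skeleton splits as the expected wedge $(S^4 \vee S^7)^{\vee(k-1)}$. Each $\phi_i$ lands in $\pi_6\big((S^4)^{\vee(k-1)}\big)$, and by the Hilton–Milnor theorem this group decomposes as a sum of homotopy groups of the wedge summands $S^4$ together with those of the higher spheres $S^7, S^{10}, \dots$ arising from iterated Whitehead products. Since the relevant range is $\pi_6$, only the $\pi_6(S^4)$ factors and the bottom Whitehead-product factor contribute; concretely $\pi_6\big((S^4)^{\vee(k-1)}\big) \cong \bigoplus_{i=1}^{k-1}\pi_6(S^4) \cong (\Z/2)^{\oplus(k-1)}$, generated by the $\eta^2$-type elements $\nu\circ\eta$ on each wedge summand (the Whitehead products $[\iota_4,\iota_4]$ live in $\pi_7$, hence do not enter degree $6$). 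So the entire obstruction to splitting is a collection of $2$-torsion classes.

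First I would fix the identification of $\pi_6\big((S^4)^{\vee(k-1)}\big)$ via Hilton–Milnor as above, reducing the problem to showing each component of $\phi_i$ in $\pi_6(S^4)\cong\Z/2$ vanishes. The most natural way to detect these components is through the homology (or mod-$2$ cohomology) Steenrod-algebra structure of $E^{(7)}$, or equivalently of $E$ itself: a nonzero $\phi_i$-component would manifest as a nontrivial secondary attaching relation, detectable by the action of $Sq^2$ linking a $4$-cell to a $6$-cell. But here the next cells up are the $7$-cells, and a nonzero class in $\pi_6(S^4)$ would force a cell in dimension $7$ attached along a degree-$6$ class—inconsistent with the minimal cell structure dictated by the homology computation $H_*(E)$, which has no room for such secondary attachment between the $4$- and $7$-dimensional generators. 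I would phrase this cleanly using that $E$ is the homotopy fibre of $f\colon M_k \to \HH P^\infty$ with $S^3 \to E \to M_k$; the base $M_k$ is $3$-connected with cells only in dimensions $0,4,8$, so all $2$-torsion in $\pi_6$ of the skeleta is controlled by the fibration.

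The cleanest argument, which I would carry out in detail, uses the fibration $S^3 \to E \to M_k$ directly. The $4$-cells of $E$ map to the $4$-cells of $M_k$, and the attaching maps $\phi_i$ are pulled back from the structure of $M_k$ together with the clutching data of the $SU(2)$-bundle. Since $M_k^{(4)} = (S^4)^{\vee k}$ is a wedge of spheres with trivial attaching maps below dimension $8$, and the bundle restricted to the $4$-skeleton is classified by a map to $\HH P^\infty$ whose restriction factors through $S^4 = \HH P^1$, the induced attaching maps on $E^{(7)}$ inherit this triviality in the range $\pi_6$. Concretely, I would show that each $\phi_i$ factors through the inclusion of a single $S^4$ where the fibration is (up to the bottom cell) the trivial product $S^3 \times S^4$, forcing $\phi_i \simeq 0$.

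The main obstacle I anticipate is rigorously tracking the Whitehead-product and composition contributions through the Hilton–Milnor splitting and confirming that no $2$-torsion class in $\pi_6(S^4)$ survives—i.e., ruling out a nonzero $\nu\circ\eta$ component for each $i$. The subtlety is that $\pi_6(S^4)\cong\Z/2$ is genuinely nonzero, so triviality is not automatic and must be extracted either from the Poincaré-duality constraints on $E$ (the precise form of $H_*(E)$ forbidding a degree-$6$ secondary attachment) or from the explicit product structure of the $SU(2)$-bundle over each $S^4 \subset M_k^{(4)}$. I expect the bundle-theoretic route to be the most efficient: the restriction of the classifying map to each $4$-sphere is degree-$\pm1$ into $\HH P^1$, over which the $SU(2)$-bundle is the Hopf bundle $S^7 \to S^4$, and its $7$-skeleton contribution splits off the $S^7$ factor cleanly, leaving no obstruction in $\pi_6$.
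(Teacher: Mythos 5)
Your setup is correct: each $\phi_i$ lives in $\pi_6\big((S^4)^{\vee(k-1)}\big)\cong\bigoplus_{i=1}^{k-1}\pi_6(S^4)\cong(\Z/2)^{\oplus(k-1)}$ by Hilton--Milnor, since $\pi_6(S^7)=0$ kills all Whitehead-product summands in this degree. But neither of your two proposed mechanisms for proving the vanishing actually works, so the proof has a genuine gap at its central step. First, the homology/Steenrod route fails: the complex $S^4\cup_{\eta^2}D^7$ has exactly the same homology as $S^4\vee S^7$, so the computation of $H_*(E)$ places no constraint whatsoever on the classes $\phi_i$ --- there is no ``secondary attaching relation'' visible in homology, and the minimal CW structure is perfectly compatible with nonzero $\phi_i$. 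Nor does $Sq^2$ help: $Sq^2$ detects $\eta$, not $\eta^2$ (which requires a secondary operation), and in any case $H^6(E)=0$ so there is no primary operation linking the $4$- and $7$-dimensional generators that could see these classes. Second, the bundle-theoretic route rests on a false premise: the classifying map restricted to the wedge summands of $M_k^{(4)}=(S^4)^{\vee k}$ has degrees $n_1,\dots,n_k$ with only $\gcd(n_1,\dots,n_k)=1$, not each $n_i=\pm1$; moreover the minimal CW structure on $E$ has $k-1$ four-cells which do not correspond to preimages of the $k$ four-cells of $M_k$, so there is no justification for the claim that each $\phi_i$ factors through a single $S^4$ over which the bundle is trivial or Hopf.

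The paper's actual argument is different and is worth internalizing: since $\pi_6(S^4)$ is in the stable range, it suffices to show $\Sigma^\infty(\vee_i\phi_i)=0$, and this is done by comparing with the base. The cofibre sequence $S^7\xrightarrow{L(M_k)}(S^4)^{\vee k}\to M_k$ gives exactness of $\pi_6^s(S^7)\to\pi_6^s\big((S^4)^{\vee k}\big)\to\pi_6^s(M_k)$, and $\pi_6^s(S^7)=0$, so the second map is injective. The composite $(S^4)^{\vee(k-1)}=E^{(4)}\hookrightarrow E\to M_k$ factors through $(S^4)^{\vee k}$ by a map that is split injective on $H_4$, hence injective on $\pi_6^s$; therefore $\pi_6^s\big((S^4)^{\vee(k-1)}\big)\to\pi_6^s(E^{(7)})$ is injective, which forces the connecting map $\Phi$ in the stable long exact sequence of $E^{(7)}$ to vanish, i.e.\ each $\phi_i\simeq 0$. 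The key input you are missing is precisely this comparison through the projection to $M_k$; the fibre-bundle structure enters only through the existence of that projection, not through any local triviality over individual cells.
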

\begin{proof}
    Note that the homotopy class of each of  the attaching maps  is in $\pi_6(S^4)^{\oplus {k-1}}$ which lies in the stable range.
Applying the $\Sigma^{\infty}$ functor on the diagram \eqref{Eq_pushout diagram E7}, we get the cofibre sequence 
$$\Sigma^{\infty}(S^6)^{\vee{(k-1)}}\xrightarrow{\Sigma^{\infty}(\vee_{i=1}^{k-1}\phi_i)}  \Sigma^{\infty}(S^4)^{\vee{(k-1)}}\rightarrow \Sigma^{\infty} E^{(7)}$$ which in turn induces a long exact sequence (on the stable homotopy groups)
\begin{myeq}\label{Eq_LES of stable homotopy of E7}
\dots\rightarrow \pi_{7}^{s}(E^{(7)}) \rightarrow \pi_6^{s}(S^6)^{\oplus k-1} \xrightarrow{\Phi} \pi_6^{s}(S^4)^{\oplus k-1} \rightarrow \pi_{6}^{s}(E^{(7)})\rightarrow \dots
\end{myeq}
where $\Phi$ is the induced map of $\vee_{i=1}^{k-1} \phi_i$. We have the following commutative diagram 
$$\xymatrix{
 & \pi_6^{s}(S^{4})^{\oplus k-1} \ar[r] \ar[d] & \pi^{s}_6(E)\ar[d] &\pi^{s}_6(E^{(7)})\ar[l]^{\simeq}  \\
\stackbelow{\pi^s_6(S^7)}{0} \ar[r] & \pi_6^{s}(S^{4})^{\oplus k} \ar[r] &\pi_{6}^{s}(M_k).}$$
The second row is a part of a long exact sequence and so, the map $\pi_6^s(S^4)^{\oplus k}\to \pi^s_6(M_k)$ is injective. Hence, the map $ \pi_6^{s}(S^4)^{\oplus{(k-1)}}\rightarrow \pi_6^{s}(E)$ is injective, in \eqref{Eq_LES of stable homotopy of E7} $\Phi$ is forced to be $0$. The result follows.
\end{proof}

Proposition \ref{Prop_phi i is 0} implies that $E$ fits into the pushout $$
\xymatrix{
 S^{10}\ar[r] \ar[d]_{L(E)} & {D^{11}} \ar[d] \\
(S^{4}\vee S^7)^{\vee{(k-1)}}  \ar[r] & E}
$$ for some $[L(E)]\in \pi_{10}((S^4\vee S^7)^{\vee(k-1)})$.
Hence, $E$ belongs to $\PP\DD^{11}_{4,7}$. We consider $$ \xymatrix{S^{10} \xrightarrow{L(E)} (S^4 \vee S^7)^{\vee{(k-1)}} \xrightarrow{} (S^7)^{\vee{(k-1)}}}$$ which is of the form $$
    \sum_{i=1}^{k-1}\lambda_i \iota^{i}_{7}\circ \nu_{7}\in \pi_{10}(S^7) ^{\oplus{k-1}} \cong \bigoplus_{i=1}^{k-1}\mathbb{Z}/24\{\nu_{7}\}.$$
The coefficients $\lambda_i$ can be computed via the $e$-invariant, see \cite{Ada66}. Recall that the $e$-invariant of a map $g \colon S^{11} \to S^8$ can be computed using Chern character. The complex $K$-theoretic $e$-invariant $e_{\CC}$ is computed via the diagram 
$$\xymatrix{
 0\ar[r]& \stackupeq{\tilde{K}(S^{12})}{\Z\{b_{12}\}} \ar[r] \ar[d]^{ch} & \tilde{K}(\Sigma C_{g})\ar[r]\ar[d]^{ch} & \stackupeq{\tilde{K}(S^{8})}{\Z\{b_8\}} \ar[r]\ar[d]^{ch}& 0  \\
   0\ar[r]& \stackbeloweq{\tilde{H}^{ev}(S^{12};\mathbb{Q})}{\Q\{a_{12}\}} \ar[r] &\tilde{H}^{ev}(\Sigma C_{g};\mathbb{Q})\ar[r]& \stackbeloweq{\tilde{H}^{ev}(S^{8};\mathbb{Q})}{\Q\{a_8\}}\ar[r]&0.    
} $$
We obtain $$ch(b_{12})= a_{12}, \quad ch(b_{8})= a_{8} + r{a}_{12}.$$
If $g=\lambda \nu_{(7)}$, $e_{\CC}(g)=r=\frac{\lambda}{12} \in \Q/\Z$, see \cite{Ada66}. We also have $e_{\CC}=2e$, where $e$ is computed using the Chern character of the complexification $c \colon KO \to K$. Therefore, from \cite[Proposition 7.14]{Ada66}
\begin{myeq}\label{Eq_Adam's result}
    b_8 \in Im(c) \implies e(g)=\frac{r}{2} \in \Q / \Z.
\end{myeq}

\begin{mysubsection}{$K$-theory of $M_k$}
Consider the Atiyah-Hirzebruch spectral sequence $$E_2^{**}= H^{*}(M_k; \pi_*K) \implies K^{*}(M_k).$$
As $M$ has only even dimensional cells, this has no non-trivial differential for degree reasons. This gives the additive structure of $K^{0}(M_k)$. Let $H^4(M_k) \cong \Z \{\psi_1,\dots, \psi_{k}\}$ and $H^8(M_k) \cong \Z\{z\}$. Note that if $\alpha_1,\dots, \alpha_k\in \pi_4(M_k) \cong H_4(M_k)\cong \Z^k$ is dual to the basis $\psi_1, \dots, \psi_k$, in the expression \eqref{Eq_general L(M) define}, the matrix $\big((g_{i,j})\big)$ of the intersection form is related to the cup product via the equation $\psi_i\cup \psi_j = g_{i,j} z$. 
Let $\tilde{\psi_1},\dots, \tilde{\psi_{k}}, \tilde{z}$ be classes in $K(M_k)$ corresponding to $\psi_1,\dots, \psi_{k}, z$ respectively, in the $E^{\infty}$-page. We have the diagram
$$\xymatrix{
 0\ar[r]&\tilde{K}^{0}(S^{8}) \ar[r]^{q^*} \ar[d]^{ch} & \tilde{K}^{0}(M_k)\ar[r]^{i^{*}}\ar[d]^{ch} &\tilde{K}^{0}((S^{4})^{\vee{k}})\ar[r]\ar[d]^{ch}& 0  \\
    0\ar[r]&\tilde{H}^{ev}(S^{8}; \mathbb{Q}) \ar[r] &\tilde{H}^{ev}(M_k;\mathbb{Q})\ar[r]& \tilde{H}^{ev}((S^{4})^{\vee{k}};\mathbb{Q})\ar[r]&0  }$$
where 
\begin{myeq}\label{Eq_ch in K of M}
ch(\tilde{z})= z,\quad ch (i^*(\tilde{\psi_j}))= {\psi_j} (\implies ch(\tilde{\psi_j})= {\psi_j}+ \tau_j z), 1\leq j\leq k.\end{myeq}
Note that in terms of the formula \eqref{Eq_general L(M) define}, $\psi_i \psi_j=g_{i,j}z$. We now use the fact that $ch$ is a ring map to get $$ch(\tilde{\psi_i}\tilde{\psi_j})= (\psi_i+\tau_i z)\cup (\psi_j+\tau_j z)= \psi_i\cup \psi_j= g_{i,j}z.$$
As $ch \colon K(M_k) \to H^{ev}(M_k;\Q)$ is injective, we deduce $\tilde{\psi_i} \tilde{\psi_j}=g_{i,j} \tilde{z}$.

Further let $q_i \colon (S^4)^{\vee k} \to S^4$ be the retraction onto the $i$-th factor, and note that $q_i \circ L(M)$ is stably equivalent to $(g_{i,i} -2l_i) \alpha_i \circ \nu_{(7)}$. Thus the $e$-invariant $$
e_{\CC}(q_i\circ L(M))=  (g_{i,i}-2l_i)e_{\CC}(\Sigma \nu_{(7)})= \frac{g_{i,i}-2l_i}{12}\in \mathbb{Q}/\mathbb{Z}.$$ We summarize these observations in the following proposition.

\begin{prop}\label{KMk}
Let $M_k \in \PP\DD^8_3$ with $L(M_k)$ as in \eqref{Eq_general L(M) define}. Then, ${K}^{0}(M_k) \cong \Z\{1, \tilde{\psi}_1,\tilde{\psi}_{2},\dots, \tilde{\psi_{k}},\tilde{z} \}$. The ring structure is given by $$
     \tilde{\psi}_i \tilde{\psi}_j= g_{i,j} \tilde{z} \quad \text{ for } 1\leq i, j\leq k. $$
and $$e_{\CC}(q_i\circ L(M))= \frac{g_{i,i}-2l_i}{12}\in \mathbb{Q}/\mathbb{Z} \quad \text{ for } 1\leq i \leq k.$$
\end{prop}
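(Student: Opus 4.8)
The plan is to extract all three conclusions from the Atiyah--Hirzebruch spectral sequence (AHSS) together with the multiplicativity and injectivity of the Chern character, exactly as the preceding discussion sets up. First I would establish the additive structure of $K^0(M_k)$. Since $M_k$ has cells only in even (real) dimensions $0,4,8$, the AHSS $E_2^{**}=H^*(M_k;\pi_*K)\Rightarrow K^*(M_k)$ is concentrated in even total degree, so every differential $d_r$ (which changes total degree by an odd amount) must vanish for degree reasons. Hence $E_2=E_\infty$, and collapsing gives a free abelian group with one generator for each cell; choosing lifts $\tilde\psi_1,\dots,\tilde\psi_k,\tilde z$ of the $E_\infty$-representatives of $\psi_1,\dots,\psi_k,z$ (and the unit $1$) yields $K^0(M_k)\cong \Z\{1,\tilde\psi_1,\dots,\tilde\psi_k,\tilde z\}$ additively.

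Next I would pin down the ring structure via the Chern character. The key input is the commutative ladder of short exact sequences relating $\tilde K^0$ and $\tilde H^{ev}(-;\Q)$ for the cofibration $(S^4)^{\vee k}\hookrightarrow M_k\to S^8$, together with the normalization \eqref{Eq_ch in K of M}: $ch(\tilde z)=z$ and $ch(\tilde\psi_j)=\psi_j+\tau_j z$ for suitable rationals $\tau_j$ (the indeterminacy in the lift of $\psi_j$ is exactly a multiple of $z$, which is why the correction term $\tau_j z$ appears). Since $ch$ is a ring homomorphism, I compute
$$ch(\tilde\psi_i\tilde\psi_j)=(\psi_i+\tau_i z)\cup(\psi_j+\tau_j z)=\psi_i\cup\psi_j,$$
where the cross terms $\tau_i z\cup\psi_j$, $\psi_i\cup\tau_j z$, and $\tau_i\tau_j z^2$ all vanish because $z\in H^8$ is top-dimensional and $\psi_j\in H^4$, so any product involving $z$ lands in $H^{\geq 12}(M_k)=0$. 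Using $\psi_i\cup\psi_j=g_{i,j}z$ from the intersection form, this reads $ch(\tilde\psi_i\tilde\psi_j)=g_{i,j}z=ch(g_{i,j}\tilde z)$. Because $ch\colon K^0(M_k)\to H^{ev}(M_k;\Q)$ is rationally injective (the AHSS collapse makes $K^0(M_k)$ torsion-free here), I conclude $\tilde\psi_i\tilde\psi_j=g_{i,j}\tilde z$.

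Finally I would compute the $e$-invariant of $q_i\circ L(M_k)$. Post-composing the attaching map $L(M_k)$ of \eqref{Eq_general L(M) define} with the retraction $q_i\colon (S^4)^{\vee k}\to S^4$ kills every Whitehead product $[\alpha_a,\alpha_b]$ and every term $\alpha_a\circ(\text{--})$ with $a\neq i$, leaving only $g_{i,i}\nu_i+l_i\nu'_i$, i.e. $g_{i,i}\nu+l_i\nu'$ on the $i$-th sphere. Stably $\nu'=-2\nu$ in $\pi_3^s$ (equivalently $[\iota_4,\iota_4]=2\nu+\nu'$ together with stable triviality of the Whitehead square gives $\nu'\simeq -2\nu$ after suspension), so $q_i\circ L(M_k)$ is stably $(g_{i,i}-2l_i)\nu_{(7)}$. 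Then by additivity of $e_\CC$ and the computed value $e_\CC(\Sigma\nu_{(7)})=\tfrac{1}{12}$ recorded just before \eqref{Eq_Adam's result}, I obtain
$$e_\CC(q_i\circ L(M_k))=(g_{i,i}-2l_i)\,e_\CC(\Sigma\nu_{(7)})=\frac{g_{i,i}-2l_i}{12}\in\Q/\Z.$$
The only point requiring genuine care is the ring-structure step: I expect the main obstacle to be justifying cleanly that the correction terms $\tau_j z$ do not interfere with the product (handled by the degree bound $H^{\geq 12}=0$) and that $ch$ is injective on the integral lattice so the identity of Chern characters upgrades to an identity in $K^0(M_k)$. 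Once injectivity and the degree vanishing are in hand, all three assertions follow formally.
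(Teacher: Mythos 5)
Your proposal is correct and follows essentially the same route as the paper: AHSS collapse for the additive structure, multiplicativity and injectivity of the Chern character (with the correction terms $\tau_j z$) for the ring structure, and the stable identity $\Sigma\nu'=-2\Sigma\nu$ to reduce $q_i\circ L(M_k)$ to $(g_{i,i}-2l_i)\nu_{(7)}$ before applying $e_{\CC}(\nu_{(7)})=\tfrac{1}{12}$. The only difference is that you spell out details the paper leaves implicit (vanishing of cross terms in degree $\geq 12$, torsion-freeness giving injectivity of $ch$), which does not change the argument.
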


\end{mysubsection}

\begin{mysubsection}{$K$-theory of $E(f)$}
The space $E:=E(f)$ is the total space of the sphere bundle associated to the quaternionic line bundle classified by $f$. We note that the quaternionic line bundle has a complex structure, and therefore, has a $K$-orientation.

Let $\gamma_{\mathbb{H}}$ be the canonical $\mathbb{H}$-bundle over $\mathbb{H}P^{\infty}$. The $K$-theoretic Thom class of $\gamma_{\mathbb{H}}$ is given by $\gamma_{\mathbb{H}}-2\in \tilde{K}^{0}(\mathbb{H}P^{\infty})$. 
As the total space of the sphere bundle is contractible, the Thom space $Th(\gamma_{\HH}) \simeq \HH P^{\infty}$. Consider the map $\pi \colon \mathbb{C}P^{\infty} \to \mathbb{H}P^{\infty}$.
The pullback bundle  $\pi^*(\gamma_{\mathbb{H}})=\gamma_{\CC} \oplus \bar{\gamma}_{\CC}$, where $\gamma_{\CC}$ is the canonical line bundle over ${\CC}P^{\infty}$.
Therefore, 
$$\pi^*ch(\gamma_\mathbb{H}-2) =ch(\pi_{}^{*}(\gamma_\mathbb{H}-2)) = ch(\gamma_{\mathbb{C}})+ ch(\bar{\gamma}_{\mathbb{C}})-2=e^x + e^{-x} -2,$$
where $H^\ast(\C P^\infty;\Z) \cong \Z[x]$ and $x=c_1(\gamma_\C)$. Since $\pi^*$ is injective on $H^*$, we may use this formula to deduce 
\begin{myeq}\label{Eq_chern phi gamma H}
ch(\Phi_{K}(\gamma_{\mathbb{H}}))= \Phi_H(\gamma_{\mathbb{H}})(1+  \frac{y}{12}+ \frac{y2}{360}+\dots)
\end{myeq}
where $H^*(\HH P^{\infty}) \cong \Z[y]$ and $\pi^*(y)=x^2$. We use the Thom isomorphism associated to $f^*(\gamma_{\HH})$ to deduce the following.

\begin{prop}\label{Prop_K 0 of Th and M}
Assume that $f^*(y)=\psi_k$. Then,
 $$\tilde{K}^{0}(Th(f^{*}(\gamma_{\mathbb{H}})))\cong \tilde{K}^{0}(M)\{\Phi_{K}(f^{*}\gamma_{\mathbb{H}})\} $$ 
as a $\tilde{K}^{0}(M)$-module, and 
 $$ch(\Phi_{K}(f^{*}\gamma_{\mathbb{H}}))= \Phi_H(f^*\gamma_{\mathbb{H}}) (1 + \frac{\psi_k^{}}{12}+ \frac{\psi_k^{2}}{360}+ \dots).$$
\end{prop}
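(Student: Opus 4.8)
The plan is to establish Proposition \ref{Prop_K 0 of Th and M} as a direct application of the Thom isomorphism in $K$-theory together with the Chern character computation in \eqref{Eq_chern phi gamma H}. First I would invoke the fact that the quaternionic line bundle $f^*(\gamma_{\HH})$ carries a complex structure (noted just before the statement), and hence a $K$-theoretic Thom class $\Phi_K(f^*\gamma_{\HH}) \in \tilde{K}^0(Th(f^*\gamma_{\HH}))$. The $K$-theory Thom isomorphism then asserts that multiplication by this Thom class gives an isomorphism of $\tilde{K}^0(M)$-modules $\tilde{K}^0(M) \xrightarrow{\cong} \tilde{K}^0(Th(f^*\gamma_{\HH}))$, sending $1 \mapsto \Phi_K(f^*\gamma_{\HH})$; this is exactly the asserted module structure, with $\Phi_K(f^*\gamma_{\HH})$ as the free generator over $\tilde{K}^0(M)$ computed in Proposition \ref{KMk}.

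For the Chern character formula, the strategy is naturality. The map $f \colon M \to \HH P^\infty$ classifies $f^*\gamma_{\HH}$, inducing a map of Thom spaces $Th(f^*\gamma_{\HH}) \to Th(\gamma_{\HH})$. Since $\Phi_K$ and $\Phi_H$ (the cohomology Thom class) are natural with respect to bundle maps, one has $f^*\Phi_K(\gamma_{\HH}) = \Phi_K(f^*\gamma_{\HH})$ and similarly for $\Phi_H$, and the Chern character commutes with these pullbacks. Applying $f^*$ to \eqref{Eq_chern phi gamma H} and using $f^*(y) = \psi_k$ (the hypothesis) then transports the universal formula $ch(\Phi_K(\gamma_{\HH})) = \Phi_H(\gamma_{\HH})(1 + \tfrac{y}{12} + \tfrac{y^2}{360} + \dots)$ to the desired identity
\[
ch(\Phi_K(f^*\gamma_{\HH})) = \Phi_H(f^*\gamma_{\HH})\Big(1 + \frac{\psi_k}{12} + \frac{\psi_k^2}{360} + \dots\Big),
\]
where the power series in $y$ becomes a (finite, since $M$ is $8$-dimensional) power series in $\psi_k$ after pullback.

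The main obstacle, such as it is, lies in carefully justifying the naturality of the Thom class under $f^*$ and confirming that the power series terms pull back correctly — in particular that $f^*(y^n) = \psi_k^n$ and that these vanish in the appropriate degrees on the finite-dimensional complex $M$, so that the formal series truncates to a genuine element of $\tilde{H}^{ev}(Th(f^*\gamma_{\HH});\Q)$. One should also check that the module generator is well-defined, i.e.\ that the Thom isomorphism is compatible with the $\tilde{K}^0(M)$-module structure coming from the projection $Th(f^*\gamma_{\HH}) \to M_+$; this is standard but must be stated. Since both the $K$-theory and cohomology Thom isomorphisms are classical and the Chern character is a natural ring homomorphism, the proof reduces to assembling these naturality statements, and I expect it to be short.
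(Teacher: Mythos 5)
Your proposal is correct and follows essentially the same route as the paper: the module statement is the $K$-theoretic Thom isomorphism for the complex (hence $K$-oriented) bundle $f^*\gamma_{\HH}$, and the Chern character formula is obtained by naturality of the Thom classes and of $ch$ under $f$, pulling back \eqref{Eq_chern phi gamma H} via $f^*(y)=\psi_k$. The paper's proof is exactly this two-line naturality argument, so no further comparison is needed.
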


\begin{proof} 
From the naturality of the Chern character as well as the Thom class, we have $$
      ch(\Phi_{K}(f^{*}(\gamma_{\mathbb{H}}))= ch(f^{*}(\Phi_{K}((\gamma_{\mathbb{H}}))= f^{*}ch(\Phi_{K}(\gamma_{\mathbb{H}})).$$
The result follows from \eqref{Eq_chern phi gamma H} and the fact $f^*(y)=\psi_k$.
\end{proof}

\begin{notation}\label{Nota_matrix form}
    Suppose we are in the situation of Proposition \ref{Prop_K 0 of Th and M}, that is, ${f^*(y)=\psi_k}$. We now assume that the basis $\{ \psi_1, \dots, \psi_k \}$ is such that one of the following cases occur
\begin{itemize}
    \item[Case 1]  If $\psi_k\cup \psi_k= \pm z$, then $\psi_j \psi_k=0$ for $1\leq j \leq k-1$.
    \item[Case 2]  If $\psi_k\cup \psi_k= g_{k,k}z$ for some integer $g_{k,k}\neq \pm 1,$ then assume $\psi_{k-1} \psi_k=1$ and $\psi_j \psi_k=0$ for $1 \leq j \leq k-2$.      
\end{itemize}    
\end{notation}

In terms of these notations we prove the following calculation. Here we consider the cofibre sequence $$E\rightarrow M_k \rightarrow Th(f^*(\gamma_{\mathbb{H}}))\rightarrow \Sigma E\rightarrow \Sigma M\rightarrow \dots $$ which demonstrates $K(\Sigma E)$ as a submodule of $K(Th(f^* \gamma_{\HH}))$ because $K(\Sigma M_k)=0$. The following proposition identifies this submodule.

\begin{prop}\label{Prop_K of sigma E}  
(1) Suppose we are in Case 1 of Notation \ref{Nota_matrix form}, then we have 
 $$\tilde{K}^{0}(\Sigma E)\cong\mathbb{Z}\{ 
   \Phi_K(f^*(\gamma_\mathbb{H}))\tilde{\psi}_1,\dots,\Phi_K(f^*(\gamma_\mathbb{H}))\tilde{\psi}_{k-1} , 
    \Phi_K(f^*(\gamma_\mathbb{H}))\tilde{z}\}.$$
(2) Suppose we are in Case 2 of Notation \ref{Nota_matrix form}, then
$$\tilde{K}^{0}(\Sigma E)\cong\mathbb{Z}\{ 
   \Phi_K(f^*(\gamma_\mathbb{H}))\tilde{\psi}_1,\dots,\Phi_K(f^*(\gamma_\mathbb{H}))\tilde{\psi}_{k-2} , \Phi_K(f^*(\gamma_{\mathbb{H}}))(\tilde{\psi}_k-g_{k,k}\tilde{\psi}_{k-1}),
    \Phi_K(f^*(\gamma_\mathbb{H}))\tilde{z}\}.$$
\end{prop}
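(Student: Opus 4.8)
The plan is to exploit the cofibre sequence
\[ E \to M_k \to Th(f^*(\gamma_\HH)) \xrightarrow{\partial} \Sigma E \to \Sigma M_k \to \cdots \]
together with the fact that $\tilde K^0(\Sigma M_k)=0$ (since $M_k$ has cells only in even degrees, $\tilde K^0(\Sigma M_k)=\tilde K^{-1}(M_k)=0$). This identifies $\tilde K^0(\Sigma E)$ with the image of the connecting map $\partial^* \colon \tilde K^0(Th(f^*\gamma_\HH)) \to \tilde K^0(\Sigma E)$, or equivalently with the kernel of the map $\tilde K^0(Th(f^*\gamma_\HH)) \to \tilde K^0(M_k)$ induced by $M_k \to Th(f^*\gamma_\HH)$. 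So the first step is to reinterpret $\tilde K^0(\Sigma E)$ as the kernel (or cokernel) of the $\tilde K$-theory map induced by the inclusion $M_k \hookrightarrow Th(f^*\gamma_\HH)$, i.e. the map that is multiplication by the Euler class of $f^*\gamma_\HH$ in $\tilde K^0$.

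The second step is to make this Euler-class map explicit using Proposition \ref{Prop_K 0 of Th and M}, which presents $\tilde K^0(Th(f^*\gamma_\HH))$ as a free $\tilde K^0(M_k)$-module on the Thom class $\Phi_K(f^*\gamma_\HH)$, and Proposition \ref{KMk}, which gives the ring structure $\tilde\psi_i\tilde\psi_j = g_{i,j}\tilde z$. Under the module identification, the relevant map sends the Thom class generator to the $K$-theoretic Euler class $e_K(f^*\gamma_\HH)\in \tilde K^0(M_k)$. Since $f^*(y)=\psi_k$ and the Euler class reduces to $f^*(\gamma_\HH - 2)$, which corresponds to $\tilde\psi_k$ (up to the standard normalization of the Thom class), multiplication by the Thom class composed with restriction to $M_k$ becomes multiplication by $\tilde\psi_k$ on the $\tilde K^0(M_k)$-module. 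Thus I compute, using $\tilde\psi_i\tilde\psi_k = g_{i,k}\tilde z$, the image of each basis element $\Phi_K(f^*\gamma_\HH)\tilde\psi_i$ under the restriction map.

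The third step is the kernel computation, which splits according to the two cases of Notation \ref{Nota_matrix form}. In Case 1, where $\psi_k\cup\psi_k=\pm z$ and $\psi_j\psi_k=0$ for $j\le k-1$, the element $\tilde\psi_k$ maps to $\pm\tilde z$ and each $\tilde\psi_j$ ($j\le k-1$) maps to $0$ under multiplication by $\tilde\psi_k$; I read off that the kernel is spanned by $\Phi_K(f^*\gamma_\HH)\tilde\psi_1,\dots,\Phi_K(f^*\gamma_\HH)\tilde\psi_{k-1}$ and $\Phi_K(f^*\gamma_\HH)\tilde z$ (the last since $\tilde\psi_k\cdot\tilde z=0$ for degree reasons, as $H^{12}(M_k)=0$). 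In Case 2, where $\psi_{k-1}\psi_k=1$ and $\psi_j\psi_k=0$ for $j\le k-2$, the element $\tilde\psi_{k-1}$ maps to $\tilde z$ and $\tilde\psi_k$ maps to $g_{k,k}\tilde z$, so the combination $\tilde\psi_k - g_{k,k}\tilde\psi_{k-1}$ lies in the kernel; this produces the stated basis with $\Phi_K(f^*\gamma_\HH)(\tilde\psi_k - g_{k,k}\tilde\psi_{k-1})$ replacing the $(k-1)$-st generator.

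The main obstacle I anticipate is the careful bookkeeping identifying the restriction map on the Thom module precisely with multiplication by $\tilde\psi_k$ (or the Euler class) rather than by some twisted variant, and in confirming that the images $\{g_{i,k}\tilde z\}$ determine exactly the asserted kernel as a \emph{free abelian} direct summand rather than merely up to finite index. Because the intersection form $((g_{i,j}))$ is unimodular, the row $(g_{1,k},\dots,g_{k,k})$ can be completed to a change of basis realizing precisely the normalizations in Notation \ref{Nota_matrix form}, and this is what guarantees that the kernel is free on the listed generators; verifying this integrality (as opposed to a rational statement, which is immediate from the Chern character being injective) is where the real care is needed.
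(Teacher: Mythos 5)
Your proposal is correct and follows essentially the same route as the paper: both identify $\tilde K^0(\Sigma E)$ with the kernel of the zero-section restriction $\tilde K^0(Th(f^*\gamma_\HH)) \to \tilde K^0(M_k)$, which under the Thom-module identification is multiplication by the $K$-theoretic Euler class $e_K(f^*\gamma_\HH) = \tilde\psi_k + m\tilde z$, and then compute the kernel using $\tilde\psi_i\tilde\psi_k = g_{i,k}\tilde z$ together with the normalizations of Notation \ref{Nota_matrix form}. Your closing remark about integrality of the kernel via unimodularity is exactly the "direct calculation of the kernel" the paper leaves implicit, and the slight imprecision about the Euler class being $\tilde\psi_k$ on the nose is harmless since the correction term $m\tilde z$ annihilates everything in sight for degree reasons.
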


\begin{proof}
We have  the following short exact sequence
$$\xymatrix{
 0\ar[r]&\tilde{K}^{0}(\Sigma E) \ar[r]  & \tilde{K}^{0}(Th(f^*(\gamma_{\mathbb{H}})) \ar[r]^-{s_0}   &\tilde{K}^0(M)\ar[r] & 0   }$$
 which implies that 
$$        \tilde{K}^{0}(\Sigma E)= \mbox{Ker} (s_0\colon \tilde{K}^{0}(Th(f^*(\gamma_{\mathbb{H}}))\rightarrow \tilde{K}^0(M))$$ 
where $s_0$ is the restriction along the zero section. Note that $s_0$ is a $\tilde{K}^{0}(M)$-module map.
Hence, 
\begin{align*}
s_0(\Phi_{K}(f^*\gamma_{\mathbb{H}}))\tilde{z})&= e_{K}(f^*\gamma_{\mathbb{H}})\tilde{z}=0 \nonumber\\
    s_0(\Phi_{K}(f^*\gamma_{\mathbb{H}}))\tilde{\psi}_i)&= e_{K}(f^*\gamma_{\mathbb{H}})\tilde{\psi}_i= g_{i k}\tilde{z} , 1\leq i \leq k 
\end{align*}
since $e_{K}(f^*\gamma_{\mathbb{H}}))= \tilde{\psi}_k + m\tilde{z} $ for some $m$.
The result follows from a direct calculation of the kernel and the assumptions in the respective cases.
\end{proof}
\end{mysubsection}

We now choose the various maps $\chi_j \colon S^7 \to E$ for $1 \leq j \leq k-1$ such that on $K$-theory they precisely represent the choice of the first $(k-1)$-elements in the basis of Proposition \ref{Prop_K of sigma E}. In these terms we calculate the $e_{\CC}$-value of the composite 
$$r_j \circ L(E) \colon S^{10} \stackrel{L(E)}{\to} (S^4 \vee S^7)^{\vee k-1}\stackrel{r_j}{\to} S^7$$
 where $r_j$ is the restriction onto the $j$-th factor, and $L(E)$ is the attaching map of the top cell of $E$.

\begin{prop}\label{Prop_tau j calculate}
(1) If we are in Case 1, then $$
      e_{\CC}(r_j\circ L(E))=\tau_j=  \frac{g_{j,j}-2l_j}{12}\in \mathbb{Q}/\mathbb{Z}, 1\leq j\leq k-1 .$$
(2) If we are in Case 2, then $$
     e_{\CC}(r_j\circ L(E))= \tau_j= \frac{g_{j,j}-2l_j}{12}\in \mathbb{Q}/\mathbb{Z}, 1\leq j\leq k-2; \quad e_{\CC}(r_{k-1}\circ L(E) )= \tau_k- g_{k,k}\tau_{k-1} \in \mathbb{Q}/\mathbb{Z}.$$
\end{prop}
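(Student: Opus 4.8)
```latex
\textbf{Proof proposal.} The plan is to compute the real $e$-invariant $e_\CC(r_j \circ L(E))$ by relating it to the $K$-theoretic data already assembled, rather than by unwinding the attaching map $L(E)$ directly. The key observation is that the classes $\chi_j \colon S^7 \to E$ were chosen precisely so that, on $K$-theory, they pick out the first $k-1$ basis elements of $\tilde K^0(\Sigma E)$ described in Proposition \ref{Prop_K of sigma E}. Since $r_j \circ L(E) \colon S^{10} \to S^7$ is (stably) a multiple of $\nu_{(7)}$, its $e$-invariant can be read off from the Chern character of the cofibre, exactly as in the model computation \eqref{Eq_Adam's result} relating $e_\CC(\lambda\nu_{(7)})$ to $\tfrac{\lambda}{12}$. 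So the first step is to identify the relevant Chern-character coefficient with the image of $\Phi_K(f^*\gamma_\HH)\tilde\psi_j$ (resp.\ the modified basis element in Case 2) under the map $ch$.

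Concretely, I would first record that the $e_\CC$-invariant of the composite $r_j \circ L(E)$ is detected by the coefficient of the top class in $ch$ of the relevant $K$-theory generator sitting in $\tilde K^0(\Sigma E)$. Using Proposition \ref{Prop_K 0 of Th and M}, namely
\[ ch(\Phi_K(f^*\gamma_\HH)) = \Phi_H(f^*\gamma_\HH)\bigl(1 + \tfrac{\psi_k}{12} + \tfrac{\psi_k^2}{360} + \dots\bigr),\]
together with the ring structure $\tilde\psi_i\tilde\psi_j = g_{i,j}\tilde z$ from Proposition \ref{KMk} and the Chern character formula \eqref{Eq_ch in K of M} $ch(\tilde\psi_j) = \psi_j + \tau_j z$, I would compute $ch\bigl(\Phi_K(f^*\gamma_\HH)\tilde\psi_j\bigr)$ and extract the coefficient of the top-dimensional class. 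The degree-$12$ contribution collects two pieces: the $\tau_j z$ term coming from the expansion of $ch(\tilde\psi_j)$, and the $\tfrac{\psi_k}{12}$ term of the Thom-class expansion multiplied against $\psi_j$, which by $\psi_j\psi_k = g_{j,k}z$ contributes $\tfrac{g_{j,k}}{12}$. In Case 1 the hypothesis $\psi_j\psi_k = 0$ for $j \le k-1$ kills the latter, leaving $e_\CC(r_j\circ L(E)) = \tau_j = \tfrac{g_{j,j}-2l_j}{12}$ via Proposition \ref{KMk}.

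For Case 2 the same computation applies verbatim for $1 \le j \le k-2$, since there $\psi_j\psi_k = 0$ again. For the last generator one must instead use the modified basis element $\Phi_K(f^*\gamma_\HH)(\tilde\psi_k - g_{k,k}\tilde\psi_{k-1})$ from part (2) of Proposition \ref{Prop_K of sigma E}. Expanding $ch$ of this combination, the $\tau$-terms give $\tau_k - g_{k,k}\tau_{k-1}$, while the Thom-expansion cross term now involves $\psi_k\psi_k = g_{k,k}z$ against the leading part and $\psi_{k-1}\psi_k = z$ against $g_{k,k}$, which by the normalization $\psi_{k-1}\psi_k = 1$ and $\psi_k\psi_k = g_{k,k}z$ cancel precisely; the net top-class coefficient is $\tau_k - g_{k,k}\tau_{k-1}$, giving the claimed value $e_\CC(r_{k-1}\circ L(E)) = \tau_k - g_{k,k}\tau_{k-1}$.

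The main obstacle I anticipate is bookkeeping the Thom-isomorphism shift correctly: one must be careful that multiplication by $\Phi_K(f^*\gamma_\HH)$ shifts degrees by $4$ (the dimension of the bundle), so that a product landing in $\psi\cdot z$ (degree $12$ in the Thom space) is what detects $e_\CC$ of a map $S^{10}\to S^7$ after the suspension identifications in the cofibre sequence $E \to M_k \to Th(f^*\gamma_\HH) \to \Sigma E$. Matching the factor-of-$12$ normalization of $e_\CC(\nu_{(7)})$ with the $\tfrac{1}{12}$ appearing in the Thom-class Chern character, and ensuring the cross terms combine to exactly $\tau_j$ (resp.\ $\tau_k - g_{k,k}\tau_{k-1}$) rather than picking up an extra summand, is where the argument must be executed with care.
```
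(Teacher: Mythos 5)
Your proposal follows essentially the same route as the paper's proof: the paper also computes $ch$ of the generators $\Phi_K(f^*\gamma_\HH)\tilde\psi_j$ (and, in Case 2, of $\Phi_K(f^*\gamma_\HH)(\tilde\psi_k-g_{k,k}\tilde\psi_{k-1})$) inside $\tilde K^0(Th(f^*\gamma_\HH))$ using Proposition \ref{Prop_K 0 of Th and M} and \eqref{Eq_ch in K of M}, and reads off $e_\CC$ as the coefficient of the top class $\Phi_H(f^*\gamma_\HH)z$. Your explicit observation that the cross terms $\tfrac{\psi_k}{12}\psi_j$ vanish or cancel by the hypotheses of Notation \ref{Nota_matrix form} is exactly the (implicit) content of the paper's computation, so the argument is correct.
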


\begin{proof}
For the $e$-invariant, we calculate the Chern character $ch\colon \tilde{K}^{0}(Th(f^*\gamma_{\mathbb{H}}))\rightarrow H^{ev}(Th(f^*\gamma_{\mathbb{H}});\mathbb{Q})$ in terms of \eqref{Eq_ch in K of M} as follows
\begin{align*}
    ch(\Phi_{K}((f^*\gamma_{\mathbb{H}}))\tilde{z})&= \Phi_H(f^*\gamma_\mathbb{H})(1+ \frac{\psi_k}{12} +\dots )z= \Phi_H(f^*\gamma_\mathbb{H})z,
\\
     ch(\Phi_{K}((f^*\gamma_{\mathbb{H}}))\tilde{\psi}_j)&= \Phi_H(f^*\gamma_\mathbb{H})(1+ \frac{\psi_k}{12} +\dots )(\psi_j + \tau_j z)\\&= \Phi_H(f^*\gamma_\mathbb{H})\psi_j + \tau_j \Phi_H(f^*\gamma_\mathbb{H})z,
\\
     ch((\Phi_{K}(f^*\gamma_{\mathbb{H}})(\tilde{\psi}_k-g_{k,k}\tilde{\psi}_{k-1}))&= \Phi_H(f^*\gamma_\mathbb{H})(1+ \frac{\psi_k}{12} +\dots )(\psi_k +\tau_k z- g_{k,k}\psi_{k-1}-g_{k,k}\tau_{k-1}z)\nonumber\\&= \Phi_H(f^*\gamma_\mathbb{H})(\psi_k -g_{k,k}\psi_{k-1})+ \Phi_H(f^*\gamma_\mathbb{H})(\tau_k - g_{k,k}\tau_{k-1})z.
\end{align*} 
\end{proof}

We now turn our attention to the attaching map $$S^{10} \stackrel{L(E)}{\to}  (S^4 \vee S^7)^{\vee k-1}\to{(S^7)}^{\vee k-1}.$$ In order to identify the composite we are required to compute the $KO$-theoretic $e$-invariant.
 We know that
\begin{align*}
 KO^*= \mathbb{Z}[\eta, u][\mu^{\pm 1}]/(2\eta, \eta^3,\eta u, u^2-4\mu) \quad \text{and} \quad
  K^*= \mathbb{Z}[\beta^{\pm 1}], 
\end{align*}
with $|\eta| = -1,~ | u|= -4,~ |\mu| =-8$ and $|\beta| = -2$.
 The complexification map $c\colon KO\rightarrow K$
 induces a graded ring homomorphism $c\colon KO^*(X) \rightarrow K^*(X)$ with $$\label{complex}
     c(\eta)=0, \quad c(u)= 2\beta^2, \quad c(\mu)= \beta^4.$$

\begin{theorem}\label{Th_divisibility}
 Let $S^3\rightarrow E \rightarrow M_k$ be the principal $SU(2)$-fibration classified by a given  map $f\colon M_k\rightarrow \mathbb{HP^{\infty}}$. Suppose that $\Sigma L(M_k) \equiv 0\pmod{\lambda}$. Then $\Sigma (r_j\circ L(E)) \equiv 0\pmod{\lambda}, 1\leq j\leq k-1$ where $r_j:(S^4 \vee S^7)^{\vee k-1} \to S^7$ is the retraction onto the $j$-th factor.
\end{theorem}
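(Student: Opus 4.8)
The plan is to detect everything in sight with the $e$-invariant. Both stable classes in question live in $\pi_3^s\cong\Z/24$: the hypothesis concerns the components $\Sigma(q_i\circ L(M_k))\in\pi_7^s(S^4)\cong\pi_3^s$, and the conclusion concerns $\Sigma(r_j\circ L(E))\in\pi_{10}^s(S^7)\cong\pi_3^s$. Since $\pi_3^s$ is exactly the image of the $J$-homomorphism, the \emph{real} $e$-invariant $e$ is injective on $\pi_3^s$, sending the generator $\nu$ to $1/24$. Thus the theorem reduces to computing $e$ on the two families of maps and observing that the divisibility transfers. First I would unwind the hypothesis $\Sigma L(M_k)\equiv 0\pmod\lambda$ componentwise: suspension kills the Whitehead products in \eqref{Eq_general L(M) define}, and the relation $[\iota_4,\iota_4]=2\nu+\nu'$ forces $\nu'\mapsto -2\nu$ stably, so $\Sigma(q_i\circ L(M_k))=(g_{i,i}-2l_i)\nu$ and the hypothesis says precisely that each $g_{i,i}-2l_i$ is divisible by $\lambda$ in $\Z/24$.

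The crucial point, and the step I expect to be the main obstacle, is to promote the \emph{complex} $e$-invariant computations of Proposition \ref{KMk} and Proposition \ref{Prop_tau j calculate} to \emph{real} ones, since $e_\CC$ only detects $\pi_3^s$ modulo $12$ whereas the statement requires the full information modulo $24$. Here I would invoke \eqref{Eq_Adam's result}: once the bottom-cell generator $b_8\in\tilde K^0(S^8)$ of the relevant four-fold suspended cofibre lies in the image of complexification $c\colon KO\to K$, one has $e=\tfrac12 e_\CC$. For both cofibres $S^8\cup D^{12}$ at hand this holds automatically, and I would verify it as follows: complexification $\tilde{KO}^0(S^8)\xrightarrow{\cong}\tilde K^0(S^8)$ is an isomorphism in degree $\equiv 0\pmod 8$, and since $\pi_{11}(KO)=0$ the restriction $\tilde{KO}^0(S^8\cup D^{12})\to\tilde{KO}^0(S^8)$ is surjective, so a real lift of the generator exists and complexifies to a lift of $b_8$. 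This is what licenses the halving $e=\tfrac12 e_\CC$ in every case.

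With the halving in hand the computation is immediate. In Case 1 of Notation \ref{Nota_matrix form}, Proposition \ref{Prop_tau j calculate} gives $e(\Sigma(r_j\circ L(E)))=\tfrac12\cdot\frac{g_{j,j}-2l_j}{12}=\frac{g_{j,j}-2l_j}{24}$, which is exactly $e(\Sigma(q_j\circ L(M_k)))$ by Proposition \ref{KMk}; injectivity of $e$ then identifies $\Sigma(r_j\circ L(E))$ with $\Sigma(q_j\circ L(M_k))$ in $\pi_3^s$, and the latter is divisible by $\lambda$ by hypothesis. In Case 2 the indices $j\le k-2$ are handled identically, while for $j=k-1$ Proposition \ref{Prop_tau j calculate} yields $e(\Sigma(r_{k-1}\circ L(E)))=\frac{g_{k,k}-2l_k}{24}-g_{k,k}\frac{g_{k-1,k-1}-2l_{k-1}}{24}$, which is the integer combination $e(\Sigma(q_k\circ L(M_k)))-g_{k,k}\,e(\Sigma(q_{k-1}\circ L(M_k)))$. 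Since $e$ is a homomorphism and both summands are divisible by $\lambda$, so is the combination, whence injectivity of $e$ gives $\Sigma(r_{k-1}\circ L(E))\equiv 0\pmod\lambda$. This completes both cases.
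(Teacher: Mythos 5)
Your overall strategy coincides with the paper's: reduce everything to $e$-invariants in $\pi_3^s\cong\Z/24$, feed in the $e_{\CC}$-computations of Proposition \ref{KMk} and Proposition \ref{Prop_tau j calculate}, and upgrade to the real $e$-invariant via \eqref{Eq_Adam's result}. You also correctly isolate the upgrade as the crux. But your justification of that step has a genuine gap. The formula $e(g)=r/2$ in \eqref{Eq_Adam's result} is not the identity ``$e=\tfrac12 e_{\CC}$'' of $\Q/\Z$-valued invariants (one cannot divide by $2$ in $\Q/\Z$): the number $r$ there is the Chern character coefficient of a \emph{specific} class in $\tilde K^0$ of the cofibre restricting to the bottom generator, and the hypothesis is that \emph{that} class lies in $\mathrm{Im}(c)$. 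Two such lifts differ by an integer multiple of $b_{12}$, and only \emph{even} multiples preserve membership in $\mathrm{Im}(c)$, since $c\colon\tilde{KO}^0(S^{12})\to\tilde K^0(S^{12})$ has image of index $2$. Your argument via $\pi_{11}(KO)=0$ produces \emph{some} real lift, but does not show that the lift whose coefficient was actually computed---the image of $\Phi_K(f^*\gamma_{\HH})\tilde\psi_j$, with coefficient $\tau_j$---is that one. If it is not, the real lift has coefficient $\tau_j+(\text{odd integer})$ and halving yields $\tau_j/2+\tfrac12$. Concretely, for $g=\nu$ the lifts $b_8$ and $b_8+b_{12}$ have coefficients $\tfrac1{12}$ and $\tfrac{13}{12}$, both computing $e_{\CC}(\nu)=\tfrac1{12}$, yet halving the second gives $\tfrac{13}{24}\neq e(\nu)=\tfrac1{24}$. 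Since $e_{\CC}$ alone already pins the class down mod $12$, and the theorem only has extra content for $\lambda\in\{8,24\}$, this unresolved $\tfrac12$-ambiguity is exactly the whole difficulty of the statement, not a technicality.

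The paper closes this gap by exhibiting the real lift explicitly: the $KO$-theoretic Atiyah--Hirzebruch spectral sequences for $\HH P^\infty$ and $M_k$ collapse, giving a $KO$-Thom class $\hat y$ with $c(\hat y)=\beta^{-2}\Phi_K(\gamma_{\HH})$ and classes $\hat\psi_j$ with $c(\hat\psi_j)=\beta^{-2}\tilde\psi_j$; multiplicativity of $c$ then shows that the very class $\Phi_K(f^*\gamma_{\HH})\tilde\psi_j$ appearing in Proposition \ref{Prop_tau j calculate} is, up to Bott periodicity, a complexification, so its coefficient $\tau_j$ is the correct one to halve on both the $M_k$ side and the $E$ side. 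Your argument would be repaired by inserting exactly this construction; the abstract surjectivity of $\tilde{KO}^0(S^8\cup D^{12})\to\tilde{KO}^0(S^8)$ cannot substitute for it.
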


\begin{proof}
   We consider the Atiyah-Hirzebruch spectral sequences for $\HH P^{\infty}$ and $M_k$ 
   \begin{align*}
E^{*,*}_2&=  H^{*}(\mathbb{H}P^{\infty};\mathbb{Z})\otimes KO^*(pt)\implies KO^*(\mathbb{HP^{\infty}}),\\  
E^{*,*}_2&=  H^{*}(M_k;\mathbb{Z})\otimes KO^*(pt)\implies KO^*(M_k).
   \end{align*}
The spectral sequences have no non-trivial differentials for degree reasons. 
Thus, $KO^*(\mathbb{HP^\infty})\cong KO^*[\hat{y}],$ where $\hat{y} \in KO^4(\HH P^\infty)$. The class $\hat{y}$ serves as $KO$-theoretic Thom class for $\gamma_{\HH}$.
We have $$c (\Phi_{KO}(\gamma_{\mathbb{H}}))= c(\hat{y})= \beta^{-2}\Phi_{K}(\gamma_{\mathbb{H}}).$$
Let $\hat{\psi}_{j}\in \tilde{KO}^{4}(M_k)$, $\hat{z} \in \tilde{KO}^{8}(M_k)$ be the class in the $E^{\infty}$-page represented by $\psi_j \in H^4(M_k)$ and $z \in H^8(M_k)$. Then we get
$$c(\hat{\psi}_{i})= \beta^{-2}\tilde{\psi}_{i}, \quad c(\hat{z})= \beta^{-4}\tilde{z} \quad \text{and} \quad \hat{\psi}_{i}\hat{\psi}_{j}=g_{i,j}\hat{z}. $$
It follows that the $K$-theoretic generators $\Phi_K(f^* \gamma_{\HH})\tilde{\psi}_j$ and $\Phi_K(f^* \gamma_{\HH})(\tilde{\psi}_k-g_{k,k}\tilde{\psi}_{k-1}) $ lie in the image of the map $c$. Therefore by \eqref{Eq_Adam's result}, we get in Case 2 that
   \[ e(r_{j}\circ L(E))\equiv 
    \begin{cases}
\frac{g_{j,j}-2l_j}{24} 
    &\pmod{\mathbb{Z}} , \quad\text{for } j<k-1,\\
    \frac{(g_{kk}-2l_k)-g_{k,k}(g_{k-1,k-1}-2l_{k-1})}{24} &\pmod{\mathbb{Z}} ,\quad\text{for }   j=k-1;
  \end{cases}\]
  and in Case 1 that $$e(r_{j}\circ L(E))\equiv 
\frac{g_{j,j}-2l_j}{24} 
    \pmod{\mathbb{Z}} , \quad\text{for } j\leq k-1.$$
The result now follows from Proposition \ref{KMk}.
\end{proof}

\section{$SU(2)$-bundles over even complexes}\label{evencase}
We study the homotopy type of $E \in \PP\DD_{4,7}^{11}$, the total space of a stable principal $SU(2)$-bundle over $M_k \in \PP\DD^8_3$. In this section, we consider the case where the intersection pairing $\langle-,-\rangle \colon H_{4}(M_k)\times H_{4}(M_k) \rightarrow \mathbb{Z}$ is even i.e. $\langle x,x \rangle \in 2\mathbb{Z}$ for all $x\in H_{4}(M_k)$. Note that in this case, $k$ must be even. We observe the following.
\begin{enumerate}
    \item If $k \geq 4$, $M_k$ supports a principal $SU(2)$-bundle whose total space $E$ is $3$-connected.
    \item The possible stable homotopy types of $E$ can be determined directly from the stable homotopy type of $M_k$ and the intersection form. In this regard, the formulas in Theorem \ref{Th_divisibility} are used to demonstrate this connection.
\end{enumerate}

\begin{mysubsection}{Existence of $SU(2)$-bundles}
We discuss the existence of principal $SU(2)$-bundle over $M_k \in \PP\DD^8_3$ with an attaching map as in \eqref{Eq_general L(M) define} whose intersection form is even. 
If $\Rank(H_{4}(M))= 2$, then up to isomorphism
$\begin{pmatrix}
  0 & 1 \\ 
  1 & 0  
\end{pmatrix}$
is the only possible matrix for the intersection form.
The attaching map $L(M)\in \pi_{7}(S^{4}\vee S^{4})$ of $M$ is of the form 
\begin{myeq}\label{Eq_rank 2 LM}
L(M)= [\alpha_{1},\alpha_{2}]+ l_{1}v'_{1}+ l_{2}v'_{2},\end{myeq} where $l_{1}, l_{2}\in \mathbb{Z}/12$.
For a principal bundle $SU(2) \to E \to M_k$ where $E$ is $3$-connected, the classifying map $f_E \colon M_k\rightarrow \mathbb{H}P^{\infty}$ is such that $\pi_{4}(f)$ is surjective. 
Suppose $\alpha_{i}\mapsto n_{i}\iota_{4}, i=1,2$ such that $\gcd(n_{1},n_{2})=1$ and $f\circ L(M)\simeq *$.
If both $l_{1}$ and $l_{2}$ are odd, no such $n_1$ and $n_2$ exists.
However, for $k \geq 4$, one may always construct a suitable map. 

\begin{prop}\label{existence of classifying map}
    Suppose $M_k\in \mathcal{PD}_{3}^{8}$  such that $\Rank(H_{4}(M_k))=k\geq 4$ and the intersection form is even. Then there exists a map $\psi \colon M_k\rightarrow \mathbb{H}P^{\infty}$ such that $\mbox{hofib}(\psi)$ is $3$-connected.
\end{prop}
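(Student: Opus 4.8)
The plan is to realize $\psi$ by a choice of integers and to reduce the existence question to a single congruence that becomes solvable once $k\geq 4$. First I would note that a map $\psi\colon M_k\to\HH P^\infty$ is determined, up to the top cell, by its restriction to the $4$-skeleton $(S^4)^{\vee k}$, which (since $\pi_4(\HH P^\infty)\cong\Z$) is recorded by integers $n_1,\dots,n_k$ with $\alpha_i\mapsto n_i\iota$, where $\iota$ generates $\pi_4(\HH P^\infty)$. Such a map extends over the $8$-cell, i.e. descends to $M_k$, precisely when $\psi_\ast L(M_k)=0$ in $\pi_7(\HH P^\infty)\cong\pi_6(S^3)\cong\Z/12$. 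Moreover, writing $F=\mathrm{hofib}(\psi)$ for the total space of the associated principal bundle $S^3\to F\to M_k$, the long exact sequence gives $\pi_3(F)\cong\mathrm{coker}\big(\pi_4(\psi)\colon\Z^k\to\Z\big)\cong\Z/\gcd(n_1,\dots,n_k)$, so that $F$ is $3$-connected exactly when $(n_1,\dots,n_k)$ is primitive. Thus the statement reduces to producing a primitive vector $(n_i)$ with $\psi_\ast L(M_k)=0$.

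Next I would compute $\psi_\ast L(M_k)$ explicitly. Using that $j\colon S^4=\HH P^1\hookrightarrow\HH P^\infty$ satisfies $j_\ast\nu=0$ and $j_\ast\nu'=[\iota,\iota]$, the latter generating $\Z/12$, together with bilinearity of Whitehead products and the distributivity law $(n\iota)\circ\beta=n(\iota\circ\beta)+\binom{n}{2}H(\beta)[\iota,\iota]$ (with $H(\nu)=1$, $H(\nu')=0$), one evaluates the image of each summand of \eqref{Eq_general L(M) define}. This yields
\[\psi_\ast L(M_k)=\Big(\sum_{i<j}g_{i,j}n_in_j+\sum_i g_{i,i}\binom{n_i}{2}+\sum_i l_i n_i\Big)[\iota,\iota]\in\Z/12,\]
so I must solve
\[c(n):=\sum_{i<j}g_{i,j}n_in_j+\sum_i g_{i,i}\binom{n_i}{2}+\sum_i l_i n_i\equiv 0\pmod{12}\]
for primitive $(n_i)$. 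Since the intersection form is even, $\tfrac12\langle n,n\rangle$ and $\tfrac12 g_{i,i}$ are integers, and $c(n)$ rewrites as $\tfrac12\langle n,n\rangle+\sum_i b_in_i$ with $b_i=l_i-\tfrac12 g_{i,i}$; in this form the rank-$2$ obstruction (both $l_i$ odd, giving $c\equiv 1\pmod 2$ on every primitive vector) is transparent.

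Finally I would solve the congruence by the Chinese Remainder Theorem, splitting $12=3\cdot 4$ and using that $c(n)\bmod 3$ depends only on $n\bmod 3$ while $c(n)\bmod 4$ depends only on $n\bmod 4$. Modulo $3$, where $2$ is invertible, completing the square turns $c$ into a non-degenerate quadratic in $k\geq 4$ variables over $\F_3$; since $c(0)=0$ and the degree ($2$) is less than the number of variables, Chevalley--Warning forces the number of zeros to be divisible by $3$, hence a zero with some coordinate $\not\equiv 0\pmod 3$ exists. Modulo $4$ I would first reduce mod $2$: evenness together with unimodularity makes $\langle-,-\rangle$ a non-degenerate alternating form over $\F_2$, so the reduction $c\bmod 2$ is a non-degenerate quadratic form and (for $k\geq 4$) has a nonzero isotropic vector $n_0$; I then correct $n_0$ by an even vector, using
\[c(n_0+2m)\equiv c(n_0)+2\big(\langle m,n_0\rangle+\textstyle\sum_i b_i m_i\big)\pmod 4,\]
to kill a possible residual $2$, the functional $m\mapsto\langle m,n_0\rangle+\sum_i b_im_i$ being nonzero mod $2$ for a suitable isotropic $n_0$. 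Recombining the two local solutions by CRT and lifting to $\Z^k$ gives $c(n)\equiv0\pmod{12}$; because the mod-$3$ solution has a coordinate prime to $3$ and the mod-$4$ solution an odd coordinate, the residues are neither all even nor all divisible by $3$, so the lift can be chosen primitive. The hard part is precisely this simultaneous mod-$4$-and-primitivity step: $c$ is merely \emph{even}, not zero, on the diagonal vectors, and one must exploit the symplectic structure coming from evenness and the extra variables $k\geq 4$ to find an isotropic solution that is not wholly even — exactly the step that breaks down at $k=2$.
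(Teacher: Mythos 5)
Your proposal is correct, but it takes a genuinely different route from the paper's. The paper first puts $L(M_k)$ into a normal form --- invoking \cite[Theorem 4.20]{BaGh2023} for $k\geq 6$ to arrange that only one $\nu_i'$-coefficient survives, and the classification of even unimodular forms \cite{MiHu73} for $k=4$ to reduce to $H\oplus H$ --- and then exhibits an explicit primitive class (namely $(0,\dots,0,1)$, resp.\ $(m,0,n,0)$ with $\gcd(m,n)=1$ and $ml_1+nl_3\equiv 0\pmod{12}$) that visibly kills $L(M_k)$. You instead keep $L(M_k)$ in the general form \eqref{Eq_general L(M) define}, compute the obstruction $c(n)=\tfrac12\langle n,n\rangle+\sum_i b_in_i\pmod{12}$ for an arbitrary $(n_1,\dots,n_k)$ via the Hilton--Hopf distributivity formula (this is exactly the computation the paper carries out later, in the odd case of Proposition \ref{Prop_existence of classifying map odd case}), and then solve the congruence for a primitive vector directly: Chevalley--Warning mod $3$, and mod $4$ a nonzero isotropic vector of the induced nondegenerate quadratic form over $\F_2$ corrected by an even vector. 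Your route is more self-contained (it does not rely on the normal-form theorem from the earlier paper) and uniform in $k$, and it cleanly explains the $k=2$ failure as the Arf-invariant-one case; the paper's route is shorter given its cited inputs. Two points you should spell out to make the argument airtight: in the mod-$4$ step, the existence of a nonzero isotropic $n_0$ for which the correcting functional $m\mapsto\langle m,n_0\rangle+\sum_ib_im_i$ is nonzero mod $2$ follows from the zero-count $2^{k-1}\pm 2^{k/2-1}\geq 6$ for $k\geq 4$ (at most one nonzero isotropic vector can coincide mod $2$ with the vector representing the linear part); and the final passage from a solution mod $12$ that is neither wholly even nor wholly divisible by $3$ to a genuinely primitive integer vector uses the standard fact that unimodular vectors mod $N$ lift to primitive vectors of $\Z^k$.
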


\begin{proof}
From \cite[Theorem 4.20]{BaGh2023}, if $k\geq 6$, the attaching map of $M_k$ can be expressed as $$L(M_k)= \sum_{1\leq i < j\leq k}g_{i,j}[\alpha_{i},\alpha_{j}]+ \sum_{1=1}^{k}\frac{g_{i,i}}{2}[\alpha_{i},\alpha_{i}]+ \sum_{i=1}^{k}s_{i}\nu_{i}'$$ such that $s_{i}=0$ for $i\geq 2$ for a choice of basis $\{\alpha_{1}, \dots, \alpha_{k}\}$ of $H_{4}(M_k)$. 
Then the map $\tilde{\psi}\colon (S^{4})^{\vee k}\xrightarrow{(0,0,\dots,0, 1)} \mathbb{H}P^{\infty}$ extends to a map $\psi\colon M_k\rightarrow \mathbb{H}P^{\infty}$ such that $\pi_{4}(\psi)$ is surjective. 

 Now if $k=4,$ by \cite{MiHu73}, the attaching map of $M_k$ can be expressed as $$L(M_k)= [\alpha_{1},\alpha_{2}]+ [\alpha_{3},\alpha_{4}]+ \sum_{i=1}^{4}l_{i}\nu_{i}'$$ for a choice of basis of $H_{4}(M_k).$ Choose two positive integers $m, n$ such that $\gcd(m,n)=1$ and $ml_{1}+ nl_{3} \equiv 0 \pmod{12}$. 
 Then the map $\tilde{\psi}\colon (S^{4})^{\vee 4}\xrightarrow{(m,0,n,0)} \mathbb{H}P^{\infty}$ extends to a map $\psi\colon M_k\rightarrow \mathbb{H}P^{\infty}$ such that $\pi_{4}(\psi)$ is surjective. 
 \end{proof}

%\end{mysubsection}

%\begin{mysubsection}{Evenness of $\epsilon$}
We now focus on the stable homotopy type of the total space $E(f)$ for $f \colon M_k \to \HH P^{\infty}$ such that $\pi_4(f)$ is injective.
From the attaching map of $M_k$ as in \eqref{Eq_general L(M) define} and even intersection form, we have 
\begin{myeq}\label{Eq_stable sigma of even M}
\Sigma^{\infty}M_k\simeq \Sigma^{\infty} (S^4)^{\vee k-1}\vee\Sigma^{\infty}(Cone({\sigma(M_k)}\nu_{(7)})
\end{myeq} for some even $\sigma(M_k)$. Hence the stable homotopy type of $M_k$ is determined by $\sigma(M_k)$.

\begin{prop}\label{Prop_Stable total space E}
    Let $E(f_{\psi})$ be the total space of a principal $SU(2)$-bundle over $M_{k}\in \mathcal{PD}^{8}_{3}$, classified by a map $f_\psi\colon M_k\rightarrow \mathbb{H}P^{\infty}$ for  $k\geq 4.$ Then $$
        \Sigma^{\infty}E(f_{\psi}) \simeq \Sigma^{\infty}(S^4\vee S^7)^{\vee {k-2}}\vee \Sigma^{\infty}Cone({{\lambda (\psi) }}\Sigma^4 \nu_{(7)} ),$$ where $\lambda(\psi) := \lambda_s(E(f_{\psi}))$, is even and a multiple of $\sigma(M_k)$.
 \end{prop}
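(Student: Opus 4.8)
The plan is to read the stable splitting off Corollary \ref{Cor_stable lambda and epsilon}, to pin down $\lambda(\psi)$ using the $e$-invariant computations of this section, and then to argue separately that the $\nu^2$-coefficient $\epsilon_s(E)$ vanishes. First I would invoke Proposition \ref{Prop_phi i is 0} to place $E:=E(f_\psi)\in\PP\DD^{11}_{4,7}$, so that Corollary \ref{Cor_stable lambda and epsilon} gives
$$\Sigma^{\infty}E\simeq \Sigma^{\infty}(S^4\vee S^7)^{\vee k-2}\vee \Sigma^{\infty}\mathrm{Cone}\bigl(\lambda_s(E)\nu_{(11)}+\epsilon_s(E)x\bigr).$$
This reduces the statement to two claims: (i) $\lambda(\psi)=\lambda_s(E)$ is even and divisible by $\sigma(M_k)$, and (ii) $\epsilon_s(E)=0$, which removes the $x$-term and yields exactly the asserted form.

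For (i), the composite $r_j\circ L(E)$ is stably $\lambda_j\nu_{(7)}\in\pi_3^s\cong\mathbb{Z}/24$, so $e(r_j\circ L(E))=\lambda_j/24$. By Theorem \ref{Th_divisibility} together with Proposition \ref{KMk} this equals $(g_{jj}-2l_j)/24$ (with the Case~2 modification in the last slot), whence $\lambda_j\equiv g_{jj}-2l_j\pmod{24}$. Since the intersection form is even, each $g_{jj}$ is even, so every $\lambda_j$ is even, and hence so is the gcd $\lambda(\psi)$ extracted in Proposition \ref{Prop_attaching map phi}. For divisibility by $\sigma(M_k)$, note that \eqref{Eq_stable sigma of even M} identifies the stable attaching map of $M_k$ with $\sigma(M_k)\nu_{(7)}$, i.e. $\Sigma L(M_k)\equiv 0\pmod{\sigma(M_k)}$; Theorem \ref{Th_divisibility} then forces $\Sigma(r_j\circ L(E))\equiv 0\pmod{\sigma(M_k)}$, so each $\lambda_j$, and therefore $\lambda(\psi)$, is a multiple of $\sigma(M_k)$.

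The hard part is (ii). Because $\lambda(\psi)$ is even, the self-equivalences of \eqref{Eq_homotopy equivalences for rank one case} alter $\epsilon$ only by multiples of $\lambda+2$ and of $4$, both even, so $\epsilon_s(E)\in\mathbb{Z}/2$ is a genuine invariant and cannot be absorbed as it is in the odd-$\lambda_s$ case of Corollary \ref{Cor_stable lambda and epsilon}. Furthermore $\nu^2\in\pi_6^s$ lies outside the image of $J$, so it is invisible to the $e$-invariant machinery of this section; detecting $\epsilon_s(E)$ genuinely requires a mod $2$ argument. My plan is to identify $\epsilon_s(E)$ with the value of the secondary cohomology operation associated to the Adem relation $Sq^4Sq^4=Sq^6Sq^2+Sq^7Sq^1$ on the degree-$4$ generators of $H^*(E;\mathbb{Z}/2)$, which is defined because the primary obstruction $Sq^4$ is forced to vanish by $H^8(E;\mathbb{Z}/2)=0$. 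To compute it I would pass to the Thom space through the cofibre sequence $E\to M_k\to Th(f^*\gamma_{\HH})$, where the Steenrod-module structure is explicit: for the Thom class $U$ one has $Sq^4U=U\cdot\psi_k$ and $Sq^8U=0$, and the even-ness $g_{jj}\equiv 0\pmod 2$ makes the relevant products $U\psi_j\psi_{j'}$ vanish mod $2$. A Peterson--Stein / functional-operation computation should then show that the secondary operation, and hence $\epsilon_s(E)$, is zero.

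This secondary-operation computation on $Th(f^*\gamma_{\HH})$ is the main obstacle, since it cannot be reached by the Chern-character/$e$-invariant arguments that handle the $\lambda$-part; everything else in the proposition is a direct application of Corollary \ref{Cor_stable lambda and epsilon}, Theorem \ref{Th_divisibility}, Proposition \ref{KMk}, and \eqref{Eq_stable sigma of even M}.
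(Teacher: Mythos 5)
Your part (i) --- evenness and $\sigma(M_k)$-divisibility of $\lambda(\psi)$ via Proposition \ref{Prop_phi i is 0}, Corollary \ref{Cor_stable lambda and epsilon}, Theorem \ref{Th_divisibility} and Proposition \ref{KMk} --- is exactly the paper's argument and is fine. The problem is part (ii): you correctly isolate $\epsilon_s(E)=0$ as the remaining claim, but you do not prove it. Your plan is to compute the secondary operation attached to $Sq^4Sq^4=Sq^6Sq^2+Sq^7Sq^1$ on the Thom space by a Peterson--Stein argument, and you conclude only that this ``should then show'' the vanishing. That deferred computation is the entire content of the claim, so as written the proposal has a genuine gap at its central point. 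Moreover, your premise that $\nu^2$ is ``invisible'' to the methods of this section and that a mod~$2$ secondary-operation argument is genuinely required is not correct.

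The paper disposes of (ii) by an elementary exact-sequence chase that stays entirely within stable homotopy groups. Map the stable cofibre sequence of the top cell of $E$ to that of $M_k$: the $7$-skeleton inclusion gives a commutative ladder comparing $\pi_{10}^s\bigl((S^4\vee S^7)^{\vee k-1}\bigr)\to\pi_{10}^s(E)$ with $\pi_{10}^s\bigl((S^4)^{\vee k}\bigr)\to\pi_{10}^s(M_k)$. Because $\sigma(M_k)$ is even, the preceding map $\pi_{10}^s(S^7)\to\pi_{10}^s\bigl((S^4)^{\vee k}\bigr)$ induced by $\Sigma^\infty L(M_k)$ is zero (it is $\sigma(M_k)\alpha\circ\nu^2$ with $\nu^2$ of order $2$), so the bottom map $\pi_{10}^s\bigl((S^4)^{\vee k}\bigr)\to\pi_{10}^s(M_k)$ is injective. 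The class $\Sigma^\infty\phi=\lambda(\psi)\,\beta_{k-1}\circ\nu_{(7)}+\epsilon\,\alpha_{k-1}\circ\nu^2$ dies in $\pi_{10}^s(E)$, hence its image in $\pi_{10}^s\bigl((S^4)^{\vee k}\bigr)$ must vanish; since $\lambda(\psi)$ is even the $\beta_{k-1}\circ\nu_{(7)}$ term contributes an even multiple of $\nu^2$, i.e.\ zero, leaving $\epsilon\,\alpha_{k-1}\circ\nu^2=0$ and so $\epsilon=0$. If you want to salvage your write-up, replace the secondary-operation plan with this diagram chase; alternatively, you would need to actually carry out the Peterson--Stein computation, which is considerably harder than the problem demands.
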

 \begin{proof}
     Note that $E(f_{\psi}) \in \mathcal{PD}_{4,7}^{11}$ and its stable homotopy type is given in the Corollary \ref{Cor_stable lambda and epsilon} where $\epsilon= \epsilon_s(E(f_\psi))\in \mathbb{Z}/2$ and $\lambda(\psi)\in\mathbb{Z}/24$. So, it suffices to show that $2|\lambda(\psi)$ and $\epsilon=0$. 
     The fact $2|\lambda(\psi)$ follows from \eqref{Eq_stable sigma of even M} and Theorem \ref{Th_divisibility}. 
     
     Now, the cofibre sequence obtained from cell structure of $E$ and $M_k$ induces 
     %the following cofibre sequences 
     % \begin{align*}
     %     &\Sigma^{\infty} S^7\xrightarrow{\Sigma^{\infty}L(M)}\Sigma^{\infty}(S^4)^{\vee{k}}\rightarrow\Sigma^{\infty}M_k\\
     %     &\Sigma^{\infty} S^{10}\xrightarrow{\Sigma^{\infty}\phi}\Sigma^{\infty}(S^4\vee S^7)^{k-1}\xrightarrow{}\Sigma^{\infty}E
     % \end{align*}
 %Then the bundle map induces 
 the following commutative diagram    
 $$  \xymatrix@R+1pc@C+3pc{
\pi_{10}^{s}(S^{10})\ar[r]^{\phi_*} &  \pi_{10}^s(S^4\vee S^7)^{\bigoplus {k-1}}\ar[d]{}\ar[r]&{}\pi_{10}^{s}(E)\ar[d]{}\\
\pi_{10}^{s}(S^{7})\ar[r]_{0}^{\pi_{10}^s(\sum^{\infty} L(M))}&  \pi^s_{10}(S^4)^{\bigoplus k}\ar[r]&{}\pi_{10}^{s}(M_k)
   }$$
  of stable homotopy groups where $\phi$ is the attaching map of top cell in $E$.

Since $\lambda(\psi) \beta_{k-1} \circ \nu_{7} + \epsilon \alpha_{k-1} \circ x=0$ in $\pi^s_{10} (E)$, its image in $\pi_{10}^s(M)$ is $0$.   
 Note that bottom left map $\pi_{10}^s(\sum^{\infty} L(M))=0$ because $2|\sigma(M_k)$ in \eqref{Eq_stable sigma of even M} and hence the bottom right map is injective, where $\pi_{10}^s(S^4)^{\oplus k}\cong \mathbb{Z}/2\{\alpha_1\circ \nu^2,\dots,\alpha_{k}\circ\nu^2\}$.
 Since $2|\lambda(\psi)$, the middle vertical arrow sends $\Sigma^{\infty}\phi= \lambda(\psi) \beta_{k-1}\circ\nu_{(7)}+\epsilon \alpha_{k-1}\circ\nu^2$  to $\epsilon\alpha_{k-1}\circ\nu^2$ which is in turn mapped to $0$ via the bottom right map as $\Sigma^{\infty}\phi=0\in\pi_{10}^s(E).$ Hence $\epsilon=0\in\mathbb{Z}/2$.
 \end{proof}

\end{mysubsection}

\begin{mysubsection}{Stably trivial manifolds} The following result states that the total space of a principal $SU(2)$-bundle over stably trivial $M_{k}\in \mathcal{PD}^{8}_{3}$ ( i.e., $\sigma(M_k)\equiv 0\pmod{24}$), is itself a connected sum of copies of $S^4\times S^7$.
\begin{prop}\label{stabtriv}
   Let $E\in \PP\DD^{11}_{4,7}$  be the total space of a principal $SU(2)$-bundle over stably trivial $M_{k}\in \mathcal{PD}^{8}_{3}$. Then $E\simeq \#^{k-1}(S^4\times S^7)$.
\end{prop}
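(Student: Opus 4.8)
The plan is to reduce the claim to the two stable invariants $\lambda_s(E)$ and $\epsilon_s(E)$ and then read off the homotopy type from Theorem~\ref{Th_general classification of E}. First I would unwind the hypothesis: $M_k$ stably trivial means $\sigma(M_k)\equiv 0\pmod{24}$, equivalently $g_{i,i}-2l_i\equiv 0\pmod{24}$ for every $i$. Since $g_{i,i}=2l_i+24m$ is then even, the intersection form is forced to be even, so $k$ is even and (for $k\geq 4$) the machinery of \S\ref{evencase} applies to $E$.

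The heart of the argument is to show $\lambda_s(E)=0$ in $\Z/24$ and $\epsilon_s(E)=0$ in $\Z/2$. For this I would invoke Proposition~\ref{Prop_Stable total space E}, which gives $\Sigma^\infty E\simeq \Sigma^\infty(S^4\vee S^7)^{\vee k-2}\vee\Sigma^\infty\Cone(\lambda(\psi)\Sigma^4\nu_{(7)})$ together with the statements that $\lambda(\psi)=\lambda_s(E)$ is a multiple of $\sigma(M_k)$ and that $\epsilon_s(E)=0$. Because $\sigma(M_k)\equiv 0\pmod{24}$, every multiple of it vanishes in $\Z/24$, whence $\lambda_s(E)=0$. (Alternatively, one may feed $\lambda=24$ into Theorem~\ref{Th_divisibility}, since $\Sigma L(M_k)\equiv 0\pmod{24}$, to deduce that each $e(r_j\circ L(E))$ is integral, so every coefficient $\lambda_j$, and hence $\lambda_s(E)$, is $0\bmod 24$; the vanishing of $\epsilon_s(E)$ is precisely the stable diagram chase in the proof of Proposition~\ref{Prop_Stable total space E}, which works whenever $2\mid\sigma(M_k)$ and $\lambda_s(E)=0$.)

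With $\lambda_s(E)=0$, in particular even, and $\epsilon_s(E)=0$, Theorem~\ref{Th_general classification of E}(1) gives $E\simeq\#^{k-2}E_{0,0,0}\,\#\,E_{0,\epsilon,\delta}$ with $\epsilon\equiv 0\pmod 2$. Consulting the $\lambda=0$ row of Table~\ref{table:homotopy equivqlence of E}, the only representative with $\epsilon$ even is $E_{0,0,0}$, so $E\simeq\#^{k-1}E_{0,0,0}$. Finally I would identify $E_{0,0,0}=(S^4\vee S^7)\cup_{[\iota_4,\iota_7]}D^{11}$ with $S^4\times S^7$, since the attaching map of the top cell of a product of spheres is exactly the Whitehead product $[\iota_4,\iota_7]$; this yields $E\simeq\#^{k-1}(S^4\times S^7)$.

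The only genuine content is the vanishing $\lambda_s(E)=0$ and $\epsilon_s(E)=0$, which rests entirely on the $K$- and $KO$-theoretic Chern-character computations behind Theorem~\ref{Th_divisibility} and Proposition~\ref{Prop_Stable total space E}; granting those, the rest is bookkeeping together with the identification $E_{0,0,0}\simeq S^4\times S^7$. The one boundary point I would flag is that stable triviality forces $k$ even while Proposition~\ref{Prop_Stable total space E} is stated for $k\geq 4$, so $k=2$ must be treated separately: there the even form is hyperbolic with $l_1=l_2=0$, giving $M_2\simeq S^4\times S^4$, and $\#^{k-1}(S^4\times S^7)=S^4\times S^7$ recovers the rank-one case already recorded in the introduction.
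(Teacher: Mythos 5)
Your proposal is correct and follows essentially the same route as the paper: it combines Theorem \ref{Th_general classification of E} with the vanishing of $\lambda_s(E)$ and $\epsilon_s(E)$ supplied by Proposition \ref{Prop_Stable total space E} (for $k\geq 4$), and handles $k=2$ separately via $M_2\simeq S^4\times S^4$. You simply spell out more of the intermediate bookkeeping (evenness of the form, the Table \ref{table:homotopy equivqlence of E} lookup, and the identification $E_{0,0,0}\simeq S^4\times S^7$) that the paper leaves implicit.
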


\begin{proof}
From Theorem \ref{Th_general classification of E}, we have $E\simeq \#^{k-2}E_{0,0,0}\#E_{0,\epsilon,\delta}$.
Then the result follows from Proposition \ref{Prop_Stable total space E} for $k \geq 4$. 
For $k=2$, the attaching map of $M_2$ is of the form \eqref{Eq_rank 2 LM}. Stably trivial condition implies $M_2=S^4 \times S^4$ which further implies $E=S^4 \times S^7$. 
\end{proof}

\end{mysubsection}

\begin{mysubsection}{Possible stable homotopy types of the total space}
Let $\psi\in H^{4}(M_k;\mathbb{Z})$ be a cohomology class represented by a map $\overline{\psi}\colon M_{k}\rightarrow K(\mathbb{Z},4)$ which has a unique lift $\tilde{\psi}\colon M_k\rightarrow S^4$ up to homotopy if ${\overline{\psi}} \big{|}_{{(S^4)}^{\vee {k-1}}}\circ L(M_k) \in \pi_7(S^4)$ is $0$. As the inclusion $(S^4)^{\vee{k}}\rightarrow M_{k}$ induces an isomorphism on $H_4$ and $H^4$, a cohomology class $\psi\in H^4(M_k)$ always induces a map $\tilde{\psi}\colon (S^4)^{\vee{k}}\rightarrow S^4.$ We consider the following diagram 
\[
\xymatrix{
(S^4)^{\vee{k}}\ar[r]{}\ar[d]^{\tilde{\psi}}&M_k\ar@{-->}[d]{}\\
S^4\ar[r]{}&\mathbb{H}P^{2}
}
\]
and formulate when the map $M_{k}\rightarrow \mathbb{H}P^2$ exists. Note that if the map exists then its homotopy fibre will be $3$-connected.

For $\psi\in H^4(M_k),$ consider the composite $$   S^7\xrightarrow{L(M_k)}(S^4)^{\vee{k}}\xrightarrow{\tilde{\psi}}S^4$$
and define $\tau(\psi)=[\tilde{\psi}\circ L(M)]\in \pi_{7}^s(S^4)$. Thus $$
\tau\colon H^4(M_k)\rightarrow \mathbb{Z}/24$$
and one can check it is a linear map. 

\begin{prop}\label{Prop_tau map}
Suppose $\psi \in H^4(M_k)$ is primitive. The map $\tilde{\psi}\colon (S^4)^{\vee{k}}\rightarrow S^4$ extends to a map $f_\psi: M_k\rightarrow \mathbb{H}P^2$ if and only if $\psi\cup\psi\equiv \tau(\psi)z\pmod{24}$ for some $z \in H^8(M_k)$.
\end{prop}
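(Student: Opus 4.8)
The plan is to solve the extension problem one cell at a time. Since $M_k$ is built from $(S^4)^{\vee k}$ by attaching a single $8$-cell along $L(M_k)$, and on the $4$-skeleton the map is $(S^4)^{\vee k}\xrightarrow{\tilde\psi} S^4 \xrightarrow{\iota}\mathbb{H}P^2$ with $\iota$ the bottom-cell inclusion, the composite extends over the top cell to a map $f_\psi\colon M_k\to\mathbb{H}P^2$ if and only if the single obstruction
\[ [\iota\circ\tilde\psi\circ L(M_k)]\in\pi_7(\mathbb{H}P^2) \]
vanishes. Everything therefore reduces to computing $\pi_7(\mathbb{H}P^2)$ together with the homomorphism $\iota_*\colon\pi_7(S^4)\to\pi_7(\mathbb{H}P^2)$, and then translating the vanishing condition into the language of cup products and the invariant $\tau$.

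For the group computation I would use the cell structure $\mathbb{H}P^2=S^4\cup_\nu e^8$. The long exact sequence of the pair $(\mathbb{H}P^2,S^4)$, combined with homotopy excision (Blakers--Massey), gives $\pi_7(\mathbb{H}P^2,S^4)\cong\pi_7(S^8)=0$ and $\pi_8(\mathbb{H}P^2,S^4)\cong\pi_8(S^8)\cong\Z$, the boundary map carrying the generator to the attaching class $\nu\in\pi_7(S^4)$. Hence $\iota_*$ is the quotient by the infinite cyclic subgroup $\langle\nu\rangle$ and $\pi_7(\mathbb{H}P^2)\cong\pi_7(S^4)/\Z\nu\cong\Z/12$. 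Writing $\tilde\psi\circ L(M_k)=a\nu+b\nu'$ in $\pi_7(S^4)\cong\Z\{\nu\}\oplus\Z/12\{\nu'\}$, the obstruction equals $b\in\Z/12$; so $f_\psi$ exists precisely when $b\equiv 0\pmod{12}$.

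It remains to match $a$ and $b$ with the stated data. For $a$, the mapping cone $C=S^4\cup_{\tilde\psi\circ L(M_k)}e^8$ admits a map $\bar\psi\colon M_k\to C$ extending $\tilde\psi$ and of degree one on the top cell; pulling back the Hopf relation $u^2=H(\tilde\psi\circ L(M_k))\,w$ along $\bar\psi^*$ yields $\psi\cup\psi=H(\tilde\psi\circ L(M_k))\,z$, and since $H(\nu)=1$, $H(\nu')=0$ this says $\psi\cup\psi=a\,z$. For $\tau(\psi)$, which by definition is the stable class of $\tilde\psi\circ L(M_k)$ in $\pi_7^s(S^4)\cong\Z/24$, the vanishing of suspended Whitehead products turns $[\iota_4,\iota_4]=2\nu+\nu'$ into the stable relation $\nu'=-2\nu$, giving $\tau(\psi)\equiv a-2b\pmod{24}$. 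Consequently $\psi\cup\psi\equiv\tau(\psi)\,z\pmod{24}$ is the statement $a\equiv a-2b\pmod{24}$, i.e. $2b\equiv 0\pmod{24}$, i.e. $b\equiv 0\pmod{12}$ --- exactly the vanishing of the obstruction, proving both implications at once. I expect the main obstacle to be the honest computation of $\pi_7(\mathbb{H}P^2)$ and of $\iota_*$, together with the clean identification of the $\nu$-coefficient $a$ with the cup-product coefficient through the Hopf invariant; the final passage between $\bmod\,24$ and $\bmod\,12$ is then purely arithmetic.
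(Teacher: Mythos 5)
Your proposal is correct and follows essentially the same route as the paper: the extension problem reduces to the single obstruction $[\iota\circ\tilde\psi\circ L(M_k)]\in\pi_7(\mathbb{H}P^2)\cong\Z/12\{\nu'\}$, and the condition $\psi\cup\psi\equiv\tau(\psi)z\pmod{24}$ is exactly the vanishing of the $\nu'$-coefficient modulo $12$. The only cosmetic difference is that the paper extends the primitive class $\psi$ to a basis so that $\tilde\psi$ is a coordinate projection and reads off $a=g_{k,k}$, $b=l_k$ directly from \eqref{Eq_general L(M) define}, whereas you identify $a$ coordinate-freely via the Hopf invariant and make the computation of $\pi_7(\mathbb{H}P^2)$ explicit.
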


\begin{proof}
Consider a primitive element $\psi \in H^4(M_k)$. We extend $\psi$ to a basis of $H^{4}(M_k)$, and use the dual basis of $\pi_4(M_k)$ to write down the attaching map of $M_{k}$ is as in $\eqref{Eq_general L(M) define}$. 
In this notation, we have $\psi^2=g_{k,k} z$ where $z \in H^8(M_k)$ is the chosen generator and $\tau (\psi)=g_{k,k}-2l_k$. 
Thus $\tilde{\psi}\circ L(M)$ maps to $0\in \pi_{7}(\mathbb{H}P^{2})$ if and only if $l_k=0$, that is $\psi\cup \psi\equiv \tau{(\psi)}z\pmod{24}$. Hence, the result follows.
\end{proof}

Proposition \ref{Prop_tau map} gives us criteria for constructing the maps $f_\psi$ out of cohomology classes $\psi$. Using this we determine which multiples of $\sigma(M_k)$ may occur as $\lambda_s(E)$ for $E\to M_k$ a principal $SU(2)$-bundle where $E$ is $3$-connected. We first show that there exist $f_\psi$ such that $\lambda(\psi)=\sigma(M_k)$ if $k$ is large enough.
\begin{prop}\label{Prop_sigma k achieved}
Suppose the stable homotopy type of $M_{k}$ is determined by $\sigma(M_k).$ Then there exists $f_\psi\colon M_{k}\rightarrow \mathbb{H}P^{\infty}$ such that
\begin{enumerate}
    \item $\lambda(\psi)\equiv \sigma(M_k) \pmod{3}$ for $k \geq 5$,
    \item $\lambda(\psi) \equiv \sigma(M_k) \pmod{8}$ for $k \geq 7$.
\end{enumerate}
\end{prop}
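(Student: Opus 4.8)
The plan is to translate everything into the even unimodular intersection lattice $\Lambda=H_4(M_k)$ (of rank $k$) together with the linear functional $\tau\colon \Lambda\to\Z/24$ of Proposition \ref{Prop_tau map} and the explicit shape of $\lambda(\psi)$ coming from Theorem \ref{Th_divisibility} and Proposition \ref{Prop_tau j calculate}. First I would invoke the normal form of \cite[Theorem 4.20]{BaGh2023} (available since $k$ is even and $\geq 6$ in both cases) to write $L(M_k)=\sum_{i<j}g_{i,j}[\alpha_i,\alpha_j]+\sum_i\tfrac{g_{i,i}}{2}[\alpha_i,\alpha_i]+s_1\nu_1'$. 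A short computation with $[\iota_4,\iota_4]=2\nu+\nu'$ then gives $g_{i,i}-2l_i=0$ for $i\ge 2$ and $g_{1,1}-2l_1=-2s_1$, so that $\tau(x)=-2s_1\langle x,\alpha_1\rangle$ and $\sigma(M_k)=\gcd(2s_1,24)$; in particular the image of $\tau$ is exactly $\sigma(M_k)\cdot(\Z/24)$.

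Next I would record the shape of $\lambda(\psi)$. Fixing a primitive $\psi$ and completing it to a basis adapted to Case 2 of Notation \ref{Nota_matrix form} (Case 1 cannot occur for an even form, since $\langle\psi,\psi\rangle$ is even and hence never $\pm1$), Theorem \ref{Th_divisibility} gives coefficients $\lambda_j\equiv\tau(\psi_j)\pmod{24}$ for $j\le k-2$ and $\lambda_{k-1}\equiv\langle\psi,\psi\rangle\big(1-\tau(\psi_{k-1})\big)$, using the existence constraint $l_k=0$, i.e. $\langle\psi,\psi\rangle\equiv\tau(\psi)\pmod{24}$. Hence $\lambda(\psi)=\gcd\big(\tau(\psi_1),\dots,\tau(\psi_{k-2}),\,\langle\psi,\psi\rangle(1-\tau(\psi_{k-1})),\,24\big)$. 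As every $\tau$-value is a multiple of $\sigma(M_k)$, this reproves $\sigma(M_k)\mid\lambda(\psi)$ and tells me precisely what to arrange: make the diagonal term $\langle\psi,\psi\rangle(1-\tau(\psi_{k-1}))$ contribute nothing beyond $\sigma(M_k)$, and make one orthogonal generator $\psi_j$ realise a generator of $\sigma(M_k)\cdot(\Z/24)$.

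The cleanest construction, which in fact yields $\lambda(\psi)=\sigma(M_k)$ exactly, is as follows. Choose a primitive $\psi\in\alpha_1^\perp$ with $\langle\psi,\psi\rangle\equiv 0\pmod{24}$; then $\tau(\psi)=0$, the criterion of Proposition \ref{Prop_tau map} holds, and $\lambda_{k-1}\equiv 0$. Inside $\psi^\perp$ choose $u$ with $\langle u,\alpha_1\rangle=1$ (possible because $\alpha_1\in\psi^\perp$ remains primitive there), so that $\tau(u)=-2s_1$ generates $\sigma(M_k)\cdot(\Z/24)$; take $u$ as one of $\psi_1,\dots,\psi_{k-2}$ and complete the basis. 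Then $\lambda(\psi)=\gcd(\dots,\tau(u),\dots,24)=\gcd(2s_1,24)=\sigma(M_k)$. Thus the whole problem reduces to producing a primitive class $\psi\in\alpha_1^\perp$ with $\langle\psi,\psi\rangle\equiv 0\pmod{24}$ whose orthogonal complement still carries $\tau$ onto its full image.

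This existence question I would settle one prime at a time, which is exactly where the two rank bounds separate. Modulo $3$ the form becomes a nondegenerate quadratic space over $\F_3$, and by Chevalley–Warning a quadratic form in $\ge 3$ variables over $\F_3$ is isotropic; restricting to the relevant hyperplanes and tracking one extra vector $u$, rank $k\ge 6$ suffices: when $3\nmid\sigma(M_k)$ one only needs an orthogonal generator $u$ with $3\nmid\tau(u)$, and when $3\mid\sigma(M_k)$ one needs in addition $3\mid\langle\psi,\psi\rangle$ so that $\lambda_{k-1}$ is not a $3$-adic unit; either way $\lambda(\psi)\equiv\sigma(M_k)\pmod 3$. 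The main obstacle is the $2$-primary statement. Over $\Z_2$ an even unimodular form is an orthogonal sum of hyperbolic planes and copies of $E_8$, and the value $\langle\psi,\psi\rangle\bmod 8$ for a primitive $\psi$, coupled with the existence condition $\langle\psi,\psi\rangle\equiv\tau(\psi)\pmod 8$, is far more rigid than the mod-$3$ situation; securing a primitive $\psi\in\alpha_1^\perp$ with $\langle\psi,\psi\rangle\equiv 0\pmod 8$ while leaving room for $\tau$ to hit a generator of its $2$-part in the complement is what forces $k\ge 8$ (i.e. $k\ge 7$). I would carry out this $2$-adic bookkeeping by isolating two hyperbolic summands away from $\alpha_1$ and building $\psi$ from their isotropic vectors, checking that the leftover rank still accommodates $u$ — the step most likely to require care and an explicit tracking of $v_2(\sigma(M_k))$.
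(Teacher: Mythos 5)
Your overall strategy is the paper's: use Proposition \ref{Prop_tau map} to translate the existence of $f_\psi$ into the condition $\psi\cup\psi\equiv\tau(\psi)z\pmod{24}$, use Proposition \ref{Prop_tau j calculate} to express $\lambda(\psi)$ as a gcd of $\tau$-values on a basis adapted to $\psi$, and then arrange a primitive $\psi$ with $\tau(\psi)\equiv\psi^2\equiv 0$ orthogonal to a class on which $\tau$ takes the value $\sigma(M_k)$, working separately mod $3$ and mod $8$. However, there are two genuine gaps. First, the step ``inside $\psi^\perp$ choose $u$ with $\langle u,\alpha_1\rangle=1$, possible because $\alpha_1$ remains primitive there'' is not justified: the intersection form restricted to $\psi^\perp$ is no longer unimodular, so primitivity of (the Poincar\'e dual of) $\alpha_1$ inside $\psi^\perp$ does not produce a dual vector lying in $\psi^\perp$. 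Indeed, if $\psi$ equals the Poincar\'e dual of $\alpha_1$ --- which your constraints permit whenever $\alpha_1$ is isotropic, since then $\psi\in\alpha_1^\perp$ and $\psi^2=0$ --- every $u\in\psi^\perp$ satisfies $\langle u,\alpha_1\rangle=0$ and no such $u$ exists. The paper reverses the order precisely to avoid this: it first fixes a primitive $\psi_0$ with $\tau(\psi_0)=\sigma(M_k)$, chooses $\psi\in\ker(-\cup\psi_0)\cap\ker\tau$ with $\psi^2\equiv\tau(\psi)\equiv 0$, obtains $\psi'$ with $\psi\cup\psi'=z$ from unimodularity of the \emph{full} form, and then corrects $\psi'$ by a multiple of $\psi_0$ to land in $\ker\tau$, which is harmless because $\psi\cup\psi_0=0$.

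Second, the quantitative heart of the statement --- that a primitive $\psi$ with $\psi^2\equiv\tau(\psi)\equiv 0\pmod{3}$ (resp.\ $\pmod{8}$) exists in the corank-two sublattice $\ker(-\cup\psi_0)\cap\ker\tau$ once $k-2\geq 3$ (resp.\ $k-2\geq 5$) --- is exactly the step you defer as ``the step most likely to require care,'' so the rank bounds $k\geq 5$ and $k\geq 7$ are never actually derived; the paper settles this by citing \cite[Chapter II, (3.2)--(3.4)]{MiHu73}. Two smaller points: routing the argument through the normal form of \cite[Theorem 4.20]{BaGh2023} ties it to even intersection forms, whereas the paper's proof is form-agnostic and is explicitly reused for odd forms in \S\ref{oddcase}; and when $3\mid\sigma(M_k)$ no extra condition on $\psi^2$ is needed, since Theorem \ref{Th_divisibility} already forces $3\mid\lambda(\psi)$.
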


\begin{proof}
 %We consider Case 2 of Notation \ref{Nota_matrix form}. If $\tau=0,$ we are done straight away. So, assume that $\tau\neq0$ and $\psi_{0}$ be a primitive cohomology class such that $\tau(\psi_{0}) = \sigma(M_k) \neq0$. Now we investigate both modulo $3$ and modulo $8$ case wise.

 We begin the proof with the first case. If $\tau \equiv 0 \pmod{3}$, the proof follows from Theorem \ref{Th_divisibility}. Let $\tau \not\equiv 0 \pmod{3}$ and $\psi_{0}$ a primitive cohomology class such that $\tau(\psi_{0}) \equiv \sigma(M_k) \not \equiv 0 \pmod{3}$. 
 We need to choose a $\psi \in \ker (-\cup\psi_{0})\cap \ker(\tau)$ such that  $\psi^2 \equiv \tau(\psi)\equiv 0\pmod{3}$. This we can do for $k-2 \geq 3$, see \cite[Chapter II, (3.2)-(3.4)]{MiHu73}.
By Poincar\'e duality, we get $\psi'$ such that $\psi\cup \psi'=z$. We write  $\psi'= \psi'_{\ker (\tau)}+t\psi_{0}$ for some $t$ where $\psi'_{\ker(\tau)}\in \ker(\tau)$. 
Since $z=\psi'\cup \psi= \psi'_{\ker(\tau)} \cup \psi$, we may assume $\psi' =\psi'_{\ker\tau}$.
Thus by assigning $\tau_1=\tau(\psi_{0}) \equiv \sigma(M_k) \pmod{3}$, $\tau_{k-1}=\tau(\psi') \equiv 0 \pmod{3}$ and $\tau_{k}=\tau(\psi)\equiv 0 \pmod{3}$; and choosing other $\psi_i$'s such that $\tau_i=0$ for $i=2, \dots, k-2$ we have $$\lambda(\psi)\equiv \sigma(M_k) \pmod{3} \quad \quad \text{for } k \geq 5.$$

%we have $\tau_{k-1}=\tau(\psi_1)=0\in \mathbb{Z}/24$ and $\tau_{k}=\tau(\psi)=0\in \mathbb{Z}/24.$ Hence for $k\geq 6$, we have $$ \begin{aligned} \lambda(\psi)&= \gcd(\tau_1,\dots,\tau_{k-2},\tau_k-g_{k,k}\tau_{k-1})\\& =\gcd(\tau_1,\dots,\tau_{k-2})= \gcd(\tau_1,\dots,\tau_{k})\equiv\sigma(M_k)\pmod{24}.\end{aligned}$$ 

Now we look into the second case. The proof goes similarly to that of the above, except when we choose $\psi \in \ker (-\cup\psi_{0})\cap \ker(\tau)$ such that  $\psi^2 \equiv \tau(\psi)\equiv 0\pmod{8}$. We can choose such $\psi$ for $k-2 \geq 5$, see \cite[Chapter II, (3.2)-(3.4)]{MiHu73}. Then following similar arguments one can deduce $$\lambda(\psi)\equiv \sigma(M_k) \pmod{8} \quad \quad \text{for } k \geq 7.$$
\end{proof}

The following theorem constructs $f_\psi$ with $\lambda(\psi)=3 \sigma(M_k)$ if $\sigma(M_k)$ is not divisible by $3$. 

\begin{theorem}\label{Theorem_divisibility theorem for 3} Suppose $3 \nmid \sigma(M_k)$. Then for $k\geq 7$, there exists $f_\psi\colon M_k\rightarrow \mathbb{H}P^{2}$ such that $\lambda(\psi)=3 \sigma(M_k)$.
\end{theorem}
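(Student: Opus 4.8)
The plan is to reuse the bookkeeping of Proposition \ref{Prop_sigma k achieved}, but to arrange the base class $f^{*}y=\psi_k$ so that it absorbs the entire $3$-primary obstruction of $\tau$, while the orthogonal complement $\psi_k^{\perp}$ still realizes the full $2$-primary part of $\sigma(M_k)$. First I would record the reduction supplied by Theorem \ref{Th_divisibility} together with Proposition \ref{Prop_tau j calculate}: once a primitive $\psi_k$ is fixed and the basis is normalized as in Notation \ref{Nota_matrix form}, the invariant $\lambda(\psi)$ is the $\gcd$ (as a divisor of $24$) of the residues $24\,e(r_j\circ L(E))=\tau(\psi_j)$ in Case~1, respectively of the Case~2 combination $\tau(\psi_k)-g_{k,k}\tau(\psi_{k-1})$, as $\psi_j$ ranges over the complementary classes $\psi_1,\dots,\psi_{k-1}$. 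Thus the goal $\lambda(\psi)=3\sigma(M_k)$ is equivalent to choosing $\psi_k$ so that $\tau(\psi_k^{\perp})=\langle 3\sigma(M_k)\rangle$ inside $\Z/24$.

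For the $3$-primary part, since $3\nmid\sigma(M_k)$ the reduction $\tau\bmod 3$ is a nonzero functional on $H^4(M_k;\Z/3)$; because the intersection form is unimodular it is nondegenerate mod $3$, so $\tau\bmod 3$ has a metric dual $\xi_0$. I would choose a primitive $\psi_k$ with $\psi_k\equiv\xi_0\pmod 3$ and $\tau(\psi_k)\not\equiv 0\pmod 3$, so that $\psi_k^{\perp}$ reduces mod $3$ to $\ker(\tau\bmod 3)$. Every complementary class then satisfies $\tau(\psi_j)\equiv 0\pmod 3$, forcing $3\mid\lambda(\psi)$; since $9\nmid 24$, the $3$-part of $\lambda(\psi)$ is then exactly $3$.

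For the $2$-primary part I would argue as in Proposition \ref{Prop_sigma k achieved}(2): using the structure theory of the (even) intersection form over $\Z_2$ from \cite[Chapter~II, (3.2)--(3.4)]{MiHu73}, which is where the hypothesis $k\geq 7$ enters, I would split off inside $\psi_k^{\perp}$ a class $\psi_{j_0}$ with $\tau(\psi_{j_0})=3\sigma(M_k)$. As $3$ is a $2$-adic unit this value has the same $2$-adic valuation as $\sigma(M_k)$, which is minimal over all of $H^4(M_k)$; hence every complementary value has $2$-part at least that of $\sigma(M_k)$ and $\psi_{j_0}$ attains it, so the $2$-part of $\lambda(\psi)$ equals that of $\sigma(M_k)$. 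Combining the two primary computations through the Chinese remainder theorem gives $\lambda(\psi)=3\sigma(M_k)$. Throughout, I would verify that $\psi_k$ can be taken primitive with $\psi_k^2\equiv\tau(\psi_k)z\pmod{24}$, so that Proposition \ref{Prop_tau map} produces the classifying map $f_\psi\colon M_k\to\HH P^2\hookrightarrow\HH P^{\infty}$, and (by primitivity of $\psi_k$) $E(f_\psi)$ is $3$-connected.

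The main obstacle I expect is carrying out the mod-$3$ and $2$-adic normalizations simultaneously: I must split off the rank $\le 2$ piece of the form that carries both the base class $\psi_k$ (aligned with the mod-$3$ dual $\xi_0$) and a complementary $3\sigma(M_k)$-witness, while keeping the remaining complement inside $\ker(\tau\bmod 3)$ and compatible with the integrality constraint $\psi_k^2\equiv\tau(\psi_k)z\pmod{24}$ of Proposition \ref{Prop_tau map}. Controlling the intersection form finely enough to achieve all of this at both primes at once is exactly what forces the bound $k\geq 7$.
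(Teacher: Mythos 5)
Your overall strategy is the one the paper uses: dualize $\tau$ modulo $3$ against the (unimodular, hence mod-$3$ nondegenerate) intersection form, take the classifying class $\psi_k$ to reduce to that dual $\xi_0$ so that the complementary classes all lie in $\ker(\tau \bmod 3)$, and then superimpose the mod-$8$ normalization of Proposition \ref{Prop_sigma k achieved}(2) (which is where $k\geq 7$ enters) by choosing $\psi_k$ via the Chinese remainder theorem. However, there is a genuine gap in the mod-$3$ step: you require a primitive $\psi_k$ with $\psi_k\equiv\xi_0\pmod 3$ \emph{and} $\tau(\psi_k)\not\equiv 0\pmod 3$. Since $\tau(\chi)z\equiv\chi\cup\xi_0\pmod 3$ for all $\chi$, any such $\psi_k$ has $\tau(\psi_k)\equiv \xi_0\cup\xi_0 \pmod 3$, so your prescribed choice is impossible precisely when $\xi_0$ is isotropic mod $3$, i.e.\ $\xi_0^2\equiv 0\pmod 3$. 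This case genuinely occurs (for instance when $\tau\bmod 3$ is dual to an isotropic vector of a hyperbolic summand), and it is exactly the second case that the paper's proof of Theorem \ref{Theorem_divisibility theorem for 3} treats separately.

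In that isotropic case your argument breaks at two points. First, $\xi_0\in\xi_0^{\perp}$ mod $3$, so $\psi_k^{\perp}$ is not a complement of $\langle\psi_k\rangle$ and one is forced into Case 2 of Notation \ref{Nota_matrix form}: the normalized basis contains a Poincar\'e-dual partner $\psi_{k-1}$ with $\psi_{k-1}\cup\psi_k=z$, and for this complementary class $\tau(\psi_{k-1})\equiv\sigma(M_k)\not\equiv 0\pmod 3$. Hence the assertion that every complementary class has $\tau\equiv 0\pmod 3$ is false there. The divisibility $3\mid\lambda(\psi)$ is nevertheless rescued, but only because the Case 2 formula of Proposition \ref{Prop_tau j calculate} replaces $\tau_{k-1}$ and $\tau_k$ by the single combination $\tau_k-g_{k,k}\tau_{k-1}$, and here $g_{k,k}\equiv\tau(\psi_k)\equiv 0\pmod 3$ kills the offending term. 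Your proof needs this second branch spelled out; as written, the construction it rests on does not exist in the isotropic case. The remainder of the sketch (the $2$-primary bookkeeping deferred to Proposition \ref{Prop_sigma k achieved}, and the verification of the lifting criterion $\psi_k^2\equiv\tau(\psi_k)z$ of Proposition \ref{Prop_tau map}) is consistent with the paper's argument.
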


\begin{proof}
    Let $$\tau^{(3)}\colon H^{4}(M_k,\mathbb{F}_{3})\rightarrow \mathbb{F}_3$$ be the restriction of $\tau$ in modulo $3$. 
    As $\tau^{3}$ is surjective, there exists $\psi_{0}\in H^{4}(M_k,\mathbb{F}_3)$ such that for all cohomology class $\psi$, $\tau^{(3)}(\psi)z\equiv\psi\cup \psi_0\pmod{3}$. In particular, $\tau^{(3)}(\psi_0)z\equiv \psi_0\cup\psi_0\pmod{3}$. 
    We consider two cases, $\psi_0^2=0$ or $\psi_0^2$ is unit. 
    
    First let $\psi_0^2$ is unit.
    Then we can choose $\psi_1, \dots, \psi_{k-1}$ such that $\psi_0 \cup \psi_i=0$. 
    We take the dual basis $\alpha_i$ corresponding to $\psi_i$ and $\alpha_k$ corresponding to $\psi_0$. Thus $\psi_0(\alpha_i)=0$ for $i=1, \dots, k-1$ and $\psi_0(\alpha_{k})=1$. Hence $$\tau_1 \equiv \dots \equiv \tau_{k-1} \equiv 0 \pmod{3}, \quad \text{and} \quad \tau_k \equiv g_{k,k} \equiv 1 \pmod{3},$$ which implies $\lambda(\psi)= \gcd(\tau_1,\dots,\tau_{k-2},\tau_{k-1}) \equiv 0 \pmod{3}$.
    
    Now let $\psi_0^2=0$. 
    Then $\tau^{(3)}(\psi_0)z=0$. We choose $\psi_1, \dots, \psi_{k-1}$ such that $\psi_{k-1} \cup \psi_0=1$ and $\psi_i \cup \psi_0=0$ for $i=1 , \dots, k-2$. After taking the dual basis, with similar argument we have $$\tau_1 \equiv \dots \equiv \tau_{k-2}\equiv 0 \pmod{3}, \quad \tau_{k-1}\equiv \sigma(M_k) \pmod{3}, \quad \text{and} \quad \tau_k \equiv g_{k,k} \equiv 0 \pmod{3}.$$
    Hence $\lambda(\psi)= \gcd(\tau_1,\dots,\tau_{k-2},\tau_k-g_{k,k}\tau_{k-1})\equiv0\pmod{3}$.     
\end{proof}

\begin{rmk}
We note that Proposition \ref{Prop_tau map}, Proposition \ref{Prop_sigma k achieved}, and Theorem \ref{Theorem_divisibility theorem for 3} does not use the fact that the intersection form is even, and also holds in the case where the intersection form is odd. 
\end{rmk}

Now we look to prove similar results  modulo $8$, which in turn provide us desired construction $\psi$ as in Proposition \ref{Prop_tau map} using the Chinese reminder theorem. However, in this case certain conditions are required for obtaining analogous $f_\psi$. 

\begin{defn}
\begin{itemize}
\item    A complex $M_k$ with $\sigma(M_k)=2 $ or $4$, is said to satisfy hypothesis $(H_8)$ if $(\ker \tau)^{\perp}=(\sigma(M_k) \psi) \pmod{8}$ where $\psi \in H^4(M_k)$ (which is unique $\pmod{\frac{8}{\sigma(M_k)}}$) satisfies
    $$\begin{cases} \psi ^2 \equiv  0 \pmod{8} & \text{ if } \sigma(M_k)=2 \\
      \psi ^2 \equiv  0 \pmod{4} & \text{ if } \sigma(M_k)=4.\end{cases}$$

\item    A complex  $M_k$ with $\sigma(M_k)=2 $ is said to satisfy hypothesis $(H_4)$ if $(\ker \tau)^{\perp}=(2\psi) \pmod{4} $ where $\psi \in H^4(M_k)$ (which is unique $\pmod{2}$) satisfies
    $$\psi ^2 \equiv \tau (\psi) \equiv 0 \mbox{ or } 4 \pmod{8}.$$
\end{itemize}
\end{defn}

Note that the hypotheses $(H_8)$ and $(H_4)$ depends only on the intersection form and $\tau$ and not on the choice of $\psi$. We now prove the existence of $f_\psi$ under the hypothesis defined above.
\begin{theorem} \label{divfor4}
 \begin{enumerate}  
\item  Suppose $8 \nmid \sigma(M_k)$. For $k\geq 5$, there exist $f_\psi \colon M_k\rightarrow \mathbb{H}P^{2}$ such that $\lambda(\psi)\equiv 0 \pmod{8}$ if and only if the complex satisfies hypothesis $(H_8)$.

\item Suppose $ \sigma(M_k)=2$. Then for $k\geq 5$, there exist $f_\psi\colon M_k\rightarrow \mathbb{H}P^{2}$ such that $\lambda(\psi)\equiv 4 \pmod{8} $ if and only if the complex satisfies hypothesis $(H_4)$.

\end{enumerate}
\end{theorem}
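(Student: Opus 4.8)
The plan is to reduce the statement to a purely linear-algebraic condition on the pair $(\langle-,-\rangle,\tau)$ modulo $8$, exploiting that $\lambda(\psi)$ is computed as a greatest common divisor of values of $\tau$. Combining Theorem \ref{Th_divisibility} with Proposition \ref{Prop_tau j calculate} expresses each $e$-invariant $e(r_j\circ L(E))$, and hence $\lambda(\psi)$, through the numbers $g_{j,j}-2l_j=\tau(\psi_j)$. Since the mod $3$ behaviour is already settled by Proposition \ref{Prop_sigma k achieved} and Theorem \ref{Theorem_divisibility theorem for 3}, and $\Z/24\cong\Z/3\times\Z/8$, it suffices to control $\lambda(\psi)$ modulo $8$ and then assemble the two congruences by the Chinese remainder theorem, which is exactly where the admissible class $\psi$ of Proposition \ref{Prop_tau map} is constructed. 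Because the intersection form is even, every $\psi^2$ is even, so $g_{k,k}\neq\pm1$ and we are always in Case 2 of Proposition \ref{Prop_tau j calculate}. Thus for a primitive $\psi=\psi_k$ completed to an adapted basis with $\psi_{k-1}\cdot\psi=1$ and $\psi_j\cdot\psi=0$ for $j\le k-2$,
\[
\lambda(\psi)\equiv\gcd\bigl(\tau(\psi_1),\dots,\tau(\psi_{k-2}),\,\tau(\psi_k)-g_{k,k}\tau(\psi_{k-1})\bigr)\pmod{8},
\]
and $f_\psi\colon M_k\to\HH P^2$ exists precisely when $\psi^2\equiv\tau(\psi)z\pmod{24}$.

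First I would translate this gcd formula through the dual vector of $\tau$. As the form is unimodular, there is a unique $t\in H^4(M_k)\otimes\Z/8$ with $\tau(\phi)\equiv\langle\phi,t\rangle\pmod8$, so that $(\ker\tau)^\perp=\langle t\rangle$; moreover $t=\sigma(M_k)\,t'$ with $t'$ primitive, since $\mathrm{im}(\tau)=(\sigma(M_k))$. In these terms $\tau(\psi_j)\equiv0\pmod8$ for all $j\le k-2$ is equivalent to $t$ being orthogonal to the corank-two sublattice $\langle\psi_1,\dots,\psi_{k-2}\rangle$, i.e. $t\in\langle\psi,\psi_{k-1}\rangle\pmod8$. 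Using $\tau(\psi)=g_{k,k}$ (valid once $f_\psi$ exists, as then $l_k=0$), the last entry $\tau(\psi_k)-g_{k,k}\tau(\psi_{k-1})$ becomes $g_{k,k}(1-\tau(\psi_{k-1}))$, and requiring it to vanish mod $8$ pins down the $\psi_{k-1}$-component of $t$. The upshot is that $\lambda(\psi)\equiv0\pmod8$ is achievable for some admissible $\psi$ exactly when $t'$ can be represented mod $8$ by a primitive isotropic class, i.e. $(\ker\tau)^\perp=(\sigma(M_k)\psi)\pmod8$ with $\psi$ isotropic; this is hypothesis $(H_8)$, the two moduli $\psi^2\equiv0\pmod8$ (for $\sigma=2$) and $\pmod4$ (for $\sigma=4$) arising from matching the existence constraint $\psi^2\equiv\tau(\psi)=\sigma(M_k)\,\psi^2$ with the value of $\sigma(M_k)$.

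For the sufficiency direction of (1), assuming $(H_8)$ I would take $\psi$ primitive with $t'\equiv\psi\pmod8$ and $\psi$ isotropic, then build the adapted basis by splitting off a hyperbolic plane containing $\psi$ and choosing $\psi_1,\dots,\psi_{k-2}$ in an isotropic complement on which $\tau\equiv0\pmod8$; here I invoke the structure theory of even unimodular forms, \cite[Chapter~II, (3.2)--(3.4)]{MiHu73}, to guarantee such a splitting once $k\ge5$. One then checks that $f_\psi$ exists and that every entry of the gcd is $\equiv0\pmod8$. The necessity direction runs the same identification backwards: $\lambda(\psi)\equiv0\pmod8$ forces $\tau$ to vanish mod $8$ on the corank-two lattice, constraining $t$ to $\langle\psi,\psi_{k-1}\rangle$ and then to $\sigma(M_k)\psi$ with $\psi$ isotropic, recovering $(H_8)$. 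Part (2) is entirely parallel, working modulo $4$ rather than $8$ and with the relaxed isotropy $\psi^2\equiv\tau(\psi)\equiv0$ or $4\pmod8$: there the gcd equals $4\pmod8$ precisely when $(\ker\tau)^\perp=(2\psi)\pmod4$, which is hypothesis $(H_4)$.

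The hard part will be the $2$-adic bookkeeping: correctly matching the modulus ($8$ versus $4$) and the isotropy condition on $\psi$ to the prescribed value $\sigma(M_k)\in\{2,4\}$, since the existence condition forces $\psi^2\equiv\sigma(M_k)\,\psi^2$ and the admissible residues of $\psi^2$ depend delicately on whether $\sigma(M_k)\equiv2$ or $4\pmod8$. The second subtlety is producing the adapted basis with $\tau$ controlled on the complement; this is where the rank bound $k\ge5$ is essential, being exactly the hypothesis under which \cite{MiHu73} lets one realise a primitive isotropic vector with prescribed orthogonal complement. Finally I would verify, as asserted, that $(H_8)$ and $(H_4)$ depend only on the intersection form and $\tau$ and not on the auxiliary choice of $\psi$.
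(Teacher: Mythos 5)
Your overall strategy is the paper's own: compute $\lambda(\psi)$ as the gcd from Proposition \ref{Prop_tau j calculate} in a basis adapted to a primitive $\psi$, translate the vanishing of that gcd $\pmod 8$ into the statement that $\tau$ kills $\langle\psi\rangle^{\perp}$, identify this with $(\ker\tau)^{\perp}=(\sigma(M_k)\psi)$, and handle the residue $\pmod 3$ separately via Proposition \ref{Prop_sigma k achieved} and the Chinese remainder theorem. The dual-vector $t=\sigma(M_k)t'$ is only a repackaging of what the paper writes as $\chi\cup(\sigma(M_k)\psi)\equiv\tau(\chi)z$. The necessity direction as you outline it is sound (note that $1-\tau(\psi_{k-1})$ is a unit mod $8$ because $\mathrm{im}(\tau)\subset(2)$, which is what forces $g_{k,k}\equiv 0\pmod 8$ from the last gcd entry).

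There is, however, a concrete gap in your sufficiency step, precisely in the "2-adic bookkeeping" you defer. For $\sigma(M_k)=4$, hypothesis $(H_8)$ only determines $\psi$ modulo $2$ and only guarantees $\psi^2\equiv 0\pmod 4$; your instruction "take $\psi$ primitive with $t'\equiv\psi$ and $\psi$ isotropic" is therefore not available as stated, and without $\psi^2\equiv\tau(\psi)\equiv 0\pmod 8$ the map $f_\psi$ need not exist (Proposition \ref{Prop_tau map}). The paper closes this by choosing $\psi'$ with $\tau(\psi')=4$, noting $\psi\cup\psi'$ is an odd multiple of $z$, and replacing $\psi$ by $\psi+2\psi'$ when $\psi^2\equiv 4\pmod 8$; the same trick is what arranges, in part (2), that one gcd entry is exactly $4\pmod 8$ rather than $0$, which your summary "the gcd equals $4$ precisely when $(\ker\tau)^{\perp}=(2\psi)\pmod 4$" silently assumes. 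Separately, you misattribute the bound $k\geq 5$: the adapted basis of Case 2 of Proposition \ref{Prop_tau j calculate} needs only unimodularity and primitivity of $\psi$, not \cite[Chapter II, (3.2)--(3.4)]{MiHu73}; the rank hypothesis is consumed entirely by the $\pmod 3$ choices in Proposition \ref{Prop_sigma k achieved}. Neither issue derails the architecture, but the $\psi\mapsto\psi+2\psi'$ adjustment is a step your argument cannot do without.
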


\begin{proof}
   The condition $k\geq 5$ comes from the fact that we are required to make certain choices modulo $3$ using Proposition \ref{Prop_sigma k achieved}. First suppose in case (1), a $f_\psi$ exists such that $\lambda(\psi)=8 \sigma(M_k)$. Then there exists a basis $\{\psi_1, \dots, \psi_{k-2}, \psi', \psi\}$ satisfying (working $\pmod{8}$)
 \begin{myeq}\label{cond8}
\begin{aligned} &\psi \cup \psi_i=0 \text{ for }1 \leq i \leq k-2, \quad &&\psi \cup \psi'=1, \\ &\tau(\psi_i)=0 \text{ for }1 \leq  i \leq k-2, \quad &&\tau (\psi') = \sigma(M_k) \quad \text{and} \quad \tau(\psi)=\psi^2=0.\end{aligned}
\end{myeq}
    Note that $\langle \psi \rangle^{\perp}= \langle \psi_1, \dots, \psi_{k-2}, \psi \rangle \subset \ker (\tau)= \langle \psi_1, \dots, \psi_{k-2}, \psi, \frac{8}{\sigma(M_k)} \psi' \rangle$. This implies $(\ker (\tau))^{\perp} = (\sigma(M_k) \psi)$, and thus the hypothesis $(H_8)$ is satisfied. 

 For the converse part if the complex satisfies hypothesis $(H_8)$, one can check that there is a choice of $\psi$ such that  \eqref{cond8} is satisfied. We look into the cases $\sigma(M_k)=2, 4$.

    First, let $\sigma(M_k)=2$. Then $(\ker (\tau))^{\perp} = (2\psi)$ and $\psi$ is well defined modulo $4$. We note that 
\begin{myeq}\label{psiprod2}
 \chi \cup (2\psi) \equiv \tau(\chi)z \pmod{8} ~\forall \chi \in H^4(M_k).
\end{myeq} 
This implies 
$$\tau (\psi )\equiv 2\psi^2 \equiv 0 \pmod{8}.$$
    For any $\chi \in \ker (\tau)$, we have $(2 \psi) \chi =\tau (\chi) \equiv 0 \pmod{8}$. In particular $2 \psi^2 = \tau(\psi) \equiv 0 \pmod{8}$. Together these two implies $\psi^2 \equiv 0 \pmod{8}$. The equation \eqref{psiprod2} implies that if $\chi\cup \psi =0$, $\tau(\chi)=0$. Now choosing a basis as in Case (2) of Proposition \ref{Prop_tau j calculate}, we obtain the conditions in \eqref{cond8}.
    
    Now let $\sigma(M_k)=4$. Then $(\ker (\tau))^{\perp} = (4\psi)$ and $\psi$ is determined modulo $2$. We proceed analogously observing that 
\[
 \chi \cup (4\psi) \equiv \tau(\chi)z \pmod{8} ~\forall \chi \in H^4(M_k),
\] 
which implies 
$$\tau (\psi )\equiv 4\psi^2 \equiv 0 \pmod{8}.$$
Let $\psi'$ be such that $\tau(\psi')=4$, and so we have that $\psi \cup \psi'$ is an odd multiple of $z$. If $\psi^2$ is $4 \pmod{8}$ we change $\psi$ to $\psi + 2 \psi'$ to ensure $\psi^2 = \tau(\psi) \equiv 0 \pmod{8}$.  Now choosing a basis as in Case (2) of Proposition \ref{Prop_tau j calculate}, we obtain the conditions in \eqref{cond8}.

  The case (2) also proceeds along analogous lines.   For the existence (of $f_\psi$ for some $\psi$) question, we need a basis satisfying 
\begin{myeq}\label{cond4}
\begin{aligned} &\psi \cup \psi_i=0 \text{ for }1 \leq i \leq k-2, \quad \psi \cup \psi'=1, \\ 
&\tau(\psi_i)\equiv 0 \pmod{4} \text{ for }1 \leq  i \leq k-2, \quad 
\tau (\psi') = \sigma(M_k),  \quad \tau(\psi)=\psi^2 \equiv 0 \mbox{ or } 4 \pmod{8}, \\
&\mbox{ such that at least one of } \tau(\psi_i) \mbox{ for } 1\leq i \leq k-2, \mbox{ or } \tau(\psi) \equiv 4 \pmod{8} .\end{aligned}
\end{myeq}
    Note that $\langle \psi \rangle^{\perp}= \langle \psi_1, \dots, \psi_{k-2},\psi \rangle \subset \ker (\tau)= \langle \psi_1, \dots, \psi_{k-2},\psi, 2 \psi' \rangle \pmod{4}$ and $(\ker (\tau))^{\perp} = (2 \psi)\pmod{4}$. Hence, the hypothesis $(H_4)$ is satisfied. 

Conversely, if $(H_4)$ is satisfied, we obtain a $\psi$ such that $\psi^2 \equiv \tau(\psi) \equiv 0 \mbox{ or } 4 \pmod{8}$. This $\psi$ also satisfies 
\[
 \chi \cup (2\psi) \equiv \tau(\chi)z \pmod{4} ~\forall \chi \in H^4(M_k).
\] 
Let $\psi'$ be such that $\tau(\psi')=2$, which implies that $\psi \cup \psi'$ is an odd multiple of $z$. Now replacing $\psi$ by $\psi + 2 \psi'$ if required we may assume that $\psi^2 \equiv \tau(\psi) \equiv  4 \pmod{8}$.  Now choosing a basis as in Case (2) of Proposition \ref{Prop_tau j calculate}, we obtain the conditions in \eqref{cond4}.
\end{proof}

If $\Rank(H_4(M_k))=k \geq 5$, the above results indicate a systematic computation of possible stable homotopy types of the total space depending on $k$, $\sigma(M_k)$ and the intersection form. In lower rank cases, the results depend on the explicit formula for the attachment $L(M_k)$, and not just on these variables. Hence the systematic description turns out to be cumbersome. We demonstrate some observations on the $\Rank(H_4(M)) =2$ case.

\begin{exam}
Recall that the attaching map $L(M)\in \pi_{7}(S^{4}\vee S^{4})$ of $M_2$ is of the form 
$$L(M)= [\alpha_{1},\alpha_{2}]+ l_{1}v'_{1}+ l_{2}v'_{2}$$ where $l_{1}, l_{2}\in \mathbb{Z}/12$.
We already have 
\begin{itemize}
\item If both $l_{1}$ and $l_{2}$ are odd, there does not exist $f \colon M\rightarrow \mathbb{H}P^{\infty}$ such that  $E=\mbox{hofib}(f)$ is $3$-connected. 
\end{itemize}
If one of $l_1$ and $l_2$ is even, or both are even, there exists $f \colon M\rightarrow \mathbb{H}P^{\infty}$ such that  $E=\mbox{hofib}(f)$ is $3$-connected. Via an explicit calculation using Proposition \ref{Prop_tau j calculate}, we observe the following. 
\begin{enumerate}
    \item If none of $l_1$ and $l_2$ are divisible by $3$, then we obtain $\lambda(E) \equiv 0 \pmod 3$.

    \item If either of $l_1$ or $l_2$ but not both are divisible by $3$, then  $\lambda(E) \not \equiv 0 \pmod 3$.

    \item If $\sigma(M) \equiv 4 \pmod{8}$ and $ l_1 l_2 \equiv 0 \pmod{8}$ where none of $l_1$ and $l_2$ are divisible by $3$, then $\lambda(E) \equiv 0 \pmod{8}$. 

    \item If $\sigma(M) \equiv 2 \pmod{8}$, we can never obtain $\lambda(E) \equiv 0, 4 \pmod{8}$. 
\end{enumerate}
\end{exam}
\end{mysubsection}

\section{$SU(2)$-bundles over odd complexes}\label{oddcase}

We now work out the case of $M_k \in \PP\DD_3^8$ for which the intersection form is odd. Recall that the notation $M_k$ means that $\Rank(H_4(M_k))=k$. The intersection form being odd implies that there are two possibilities of $\sigma(M_k)$, namely, $1$ and $3$ among the divisors of $24$. Here, we prove \\
1) For $k\geq 3$, it is possible to obtain a $SU(2)$-bundle whose total space is $3$-connected. \\ 
2) Further if $k\geq 7$, it is possible to obtain maps $\psi(j) : M_k \to \HH P^\infty$ with $\lambda(\psi(j))=j$ for every multiple $j \pmod{24} $ of $\sigma(M_k)$ which is also a divisor of $24$. 

\begin{mysubsection}{Existence of $SU(2)$-bundles}
Through an explicit computation, we demonstrate the existence of principal $SU(2)$-bundles
over $M_k \in \PP \DD ^8_3$ if $k\geq 3$ when the intersection form is odd.

\begin{prop}\label{Prop_existence of classifying map odd case}
    Suppose $M_k\in \mathcal{PD}_{3}^{8}$  such that $\Rank(H_{4}(M_k))=k\geq 3$ and the intersection form is odd. Then there exists a map $\psi \colon M_k\rightarrow \mathbb{H}P^{\infty}$ such that $\mbox{hofib}(\psi)$ is $3$-connected.
\end{prop}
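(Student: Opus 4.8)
The plan is to reduce the existence problem, via Proposition \ref{Prop_tau map}, to a single congruence and then to solve that congruence using the oddness of the intersection form together with the hypothesis $k\geq 3$. First I would recall that a principal $SU(2)$-bundle over $M_k$ is classified by a map $M_k\to \HH P^\infty=BSU(2)$, and that its total space, being the homotopy fibre, is $3$-connected exactly when the induced map on $\pi_4$ is surjective, i.e. when the classifying class is primitive in $H^4(M_k)$. Since $M_k$ is $8$-dimensional and $\HH P^2\hookrightarrow \HH P^\infty$ is $11$-connected, it suffices to produce a map $f_\psi\colon M_k\to \HH P^2$ with $\psi$ primitive. By Proposition \ref{Prop_tau map} such an $f_\psi$ exists iff $\psi\cup\psi\equiv \tau(\psi)z\pmod{24}$. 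Writing $\psi=\sum_i n_i\psi_i$ in the dual basis (so primitivity means $\gcd(n_i)=1$) and using $\psi_i\cup\psi_j=g_{i,j}z$ together with $\tau(\psi_i)=g_{i,i}-2l_i$, the congruence becomes $Q(n)\equiv \tau(n)\pmod{24}$ with $Q(n)=\sum_{i,j}g_{i,j}n_in_j$. A short check shows $Q(n)-\tau(n)$ is always even (both reduce to $\sum_i g_{i,i}n_i \bmod 2$), so after dividing by $2$ the task becomes: find $(n_1,\dots,n_k)\in\Z^k$ with $\gcd(n_i)=1$ and
\[ \Omega(n):=\sum_{i<j}g_{i,j}n_in_j+\sum_i g_{i,i}\binom{n_i}{2}+\sum_i l_in_i\equiv 0\pmod{12}. \]

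Next I would solve this by the Chinese remainder theorem, handling the conditions modulo $4$ and modulo $3$ independently, since $\Omega(n)\bmod 4$ depends only on $n\bmod 8$ and $\Omega(n)\bmod 3$ only on $n\bmod 3$. For the mod $3$ part I would rewrite $\Omega(n)\equiv 2Q(n)+\sum_i(g_{i,i}+l_i)n_i\pmod 3$; since the intersection form is unimodular, $Q$ reduces to a nondegenerate quadratic form over $\F_3$, and the resulting inhomogeneous quadric in $k\geq 3$ variables has a nonzero $\F_3$-point. This is precisely where the hypothesis $k\geq 3$ enters (a binary nondegenerate form can be anisotropic over $\F_3$, so two variables would not suffice). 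For the mod $4$ part, oddness of the form guarantees some diagonal entry $g_{p,p}$ is odd (if all $g_{i,i}$ were even the form would be even). Fixing $n_m=1$ for any index $m\neq p$ and letting only $n_p=t$ vary, $\Omega$ reduces to $\binom{t}{2}g_{p,p}+t(g_{m,p}+l_p)+l_m$, and a direct tabulation shows that $t\mapsto \binom{t}{2}g_{p,p}+at\pmod 4$ is surjective onto $\Z/4$ whenever $g_{p,p}$ is odd; hence a suitable $t$ solves the mod $4$ congruence with a primitive vector (namely one with $n_m=1$).

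Finally I would assemble a single class $n\bmod 24$ from the mod $8$ and mod $3$ solutions by CRT, keeping $n_m\equiv 1\pmod 8$ and the mod $3$ solution nonzero, and then lift to an integer vector of gcd $1$: the mod $2$ and mod $3$ data ensure the coordinates are neither all even nor all divisible by $3$, while a coordinate shift by multiples of $24$ (available since $k\geq 2$) clears any common prime factor $\geq 5$. The resulting $f_\psi\colon M_k\to \HH P^2\hookrightarrow\HH P^\infty$ is primitive on $\pi_4$, so its homotopy fibre is $3$-connected, proving the proposition.

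The main obstacle is the mod $4$ (equivalently $2$-local) solvability uniformly over \emph{all} odd unimodular intersection forms, including the exotic high-rank ones that contain no vector of self-intersection $\pm 1$ and do not diagonalize over $\Z$; no normal form is available in general. What circumvents this is the observation that one only needs a single odd diagonal entry $g_{p,p}$, together with the surjectivity of $t\mapsto \binom{t}{2}g_{p,p}+at\pmod 4$, and one auxiliary coordinate to preserve primitivity. The remaining care is purely bookkeeping: combining the two local solutions by CRT while guaranteeing global primitivity.
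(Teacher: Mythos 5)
Your proof is correct, and its skeleton --- reduce via Proposition \ref{Prop_tau map} to the vanishing of the $\nu'$-coefficient, i.e.\ your $\Omega(n)\equiv 0\pmod{12}$ with $\gcd(n_i)=1$, then split by the Chinese remainder theorem into a $3$-primary and a $2$-primary problem --- is the same as the paper's. The $\pmod{3}$ step is also essentially the paper's: both arguments come down to nondegeneracy of the intersection form over $\F_3$ and solvability of an inhomogeneous quadric in $k\geq 3$ variables (the paper diagonalizes over $\F_3$ and exhibits explicit solutions in three coordinates; you complete the square and invoke isotropy/Chevalley--Warning --- same content, and both correctly locate the hypothesis $k\geq 3$ here, consistent with Example \ref{k2odd}). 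Where you genuinely diverge is the $2$-primary step. The paper works $\pmod{8}$ and leans on the Milnor--Husemoller structure theory of odd unimodular forms: it splits off $\pm 1$'s and hyperbolic blocks, runs a case analysis (the block $B''=I_3$ and its sign variants, $H\oplus\langle\pm1\rangle$, even sub-blocks handled via Proposition \ref{existence of classifying map}), and for $3\le k\le 5$ appeals to the low-rank classification. You instead observe that the congruence only needs to hold $\pmod{4}$, that oddness of the form forces some odd diagonal entry $g_{p,p}$ in \emph{any} basis, and that the two-variable ansatz $n_m=1$, $n_p=t$ reduces everything to the surjectivity of $t\mapsto\binom{t}{2}g_{p,p}+at\pmod{4}$ for odd $g_{p,p}$ --- which indeed holds for every $a$ (I checked all residues). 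This is cleaner and more uniform: it needs no normal form over $\Z$, no rank-dependent case split at the prime $2$, and it makes transparent that oddness is used only at $2$ while $k\geq 3$ is used only at $3$. Two points deserve a line each in a written version: (i) when the constant term vanishes after completing the square mod $3$, you must note that the quadric has more than one point so the shift vector can be avoided and the mod-$3$ solution taken nonzero; (ii) the final primitivity adjustment at primes $\geq 5$ should be spelled out, which is routine since $\Omega\bmod{12}$ depends only on $n\bmod{24}$.
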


\begin{proof}
    Recall that the attaching map of $M_k$ can be expressed as \eqref{Eq_general L(M) define}. Using Proposition \ref{Prop_tau map}, we are required to find a primitive element $\psi$ such that $\tau(\psi)z \equiv \psi^2 \pmod{24}$, which is equivalent to checking that the coefficient of $\nu'$ in $\tilde{\psi}\circ L(M_k)$ is $0 \pmod{12}$. It suffices to find $\psi \pmod{8}$ and $\psi \pmod{3}$ separately. 

We first work out the $\pmod{3}$ case, where the base ring is a field of characteristic $\neq 2$ so that the form is diagonalizable. Considering the map 
$(S^4)^{\vee k}\xrightarrow[]{(0,\dots,0, n_1, n_2, n_3)} \HH P^{\infty}$ which sends 
 $L(M_k)$ 
to 
    \[
 \Big(\pm{n_1 \choose 2} \pm{n_2 \choose 2} \pm{n_3 \choose 2} + n_1 l_{k-2}+n_2 l_{k-1}+ n_3 l_k\Big) \cdot \nu' + \mbox{ multiple of } \nu,
    \]
where  the $\pm$ correspond to the diagonal entries $\pmod{3}$. We observe through a direct calculation that for every fixed choice $\epsilon_1, \epsilon_2,\epsilon_3$ of $\pm 1$s, there is a $n_1, n_2, n_3$ with $\gcd(n_1, n_2, n_3)=1$ such that 
\[
 \Big(\epsilon_1{n_1 \choose 2} +\epsilon_2{n_2 \choose 2} +\epsilon_3{n_3 \choose 2} + n_1 l_{k-2}+n_2 l_{k-1}+ n_3 l_k\Big) \equiv 0 \pmod{3}.
   \]
This completes the argument $\pmod{3}$.

 Working $\pmod{8}$, the fact that the intersection form is odd implies that we may choose a basis such that $g_{k,k} \equiv \pm 1 \pmod{8}$ and $g_{k, k-1} \equiv 0 \pmod{8}$, see \cite[Chapter II, (4.3)]{MiHu73}.  
    Also the intersection form can be written as the block matrix $\begin{pmatrix}
        A & 0 \\0 & \pm 1
    \end{pmatrix}.$ 
    
    If $A$ is an even intersection form, then the result follows from Proposition \ref{existence of classifying map} for $k \geq 5$.
    Now let $A$ is not even, then the intersection form is $\begin{pmatrix}
        A' & 0 \\ 0 & B'
    \end{pmatrix}$ where $B'$ is a diagonal matrix of order $2$ with diagonal entries $\pm 1$. If $A'$ is an even intersection form, then for $k \geq 6$ the result follows from Proposition \ref{existence of classifying map}.
    If $A'$ is not even, the intersection form updates to 
\[
        \begin{pmatrix}
        A'' & 0 \\0 & B''
    \end{pmatrix}
    \] 
    where $B''$ is a diagonal matrix of order $3$ with diagonal entries $\pm 1$.
  % If $A''$ is even, then for $k \geq 7$ we have the result from Proposition \ref{existence of classifying map}. Otherwise if $A''$ is not even and 
For $B''=I_3$, the map $(S^4)^{\vee k}\xrightarrow[]{(0,\dots,0, n_1, n_2, n_3)} \HH P^{\infty}$ sends $L(M_k)$ 
to 
    \[
 \Big({n_1 \choose 2} +{n_2 \choose 2} +{n_3 \choose 2} + n_1 l_{k-2}+n_2 l_{k-1}+ n_3 l_k\Big) \cdot \nu' + \mbox{ multiple of } \nu,
    \]
    where $\gcd (n_1, n_2, n_3)=1$ ensures that the corresponding $\psi$ is primitive. We may directly compute and observe that the equations 
\[ \Big({n_1 \choose 2} +{n_2 \choose 2} +{n_3 \choose 2} + n_1 l_{k-2}+n_2 l_{k-1}+ n_3 l_k\Big) \equiv 0 \pmod{8}, \gcd (n_1, n_2, n_3)=1,\] 
have a common solution. A similar argument works for the other diagonal $\pm 1$ matrix choices for $B'''$. This proves the result for $k\geq 6$. 

If $3 \leq k \leq 5$, we know from \cite[Chapter II, (3.2)-(3.4)]{MiHu73} that the form is a direct sum of $\pm 1$ and the hyperbolic form. The argument in the even case  in  Proposition \ref{existence of classifying map} implies the result for a sum of two hyperbolic forms, and the above argument implies the result for a sum of $3$ $\pm 1$s. The remaining cases are taken care of if we show the result for the intersection form 
\[
        \begin{pmatrix}
            H & 0 \\ 0 & \pm 1
        \end{pmatrix}
    \]
    where $H= \begin{pmatrix}
        0 &1 \\1 &0
    \end{pmatrix}$ is the hyperbolic matrix. 
    We consider the map $(S^4)^{\vee 3} \xrightarrow[]{(n_1, n_2, n_3)} \HH P^{\infty}$, and as above we need to find a common solution of  
    $${n_3 \choose 2}+ n_1 n_2+\sum_{i=1}^3 n_i l_i \equiv 0 \pmod{8},  \gcd (n_1, n_2, n_3)=1.$$ 
Through a direct calculation, we check that such solutions always exist. This completes the proof. 
\end{proof}

The following example shows that Proposition \ref{Prop_existence of classifying map odd case} does not extend to the $k=2$ case. 
\begin{exam}\label{k2odd}
Consider $M_2$ such that 
\[L(M_2) = \nu_1 + \nu_2 + l_1 \nu_1' + l_2 \nu_2', \quad l_1\equiv l_2 \equiv 2 \pmod{3}.\]
A map $M_2 \to \HH P^\infty$ which restricts to $(n_1,n_2)$ on the $4$-skeleton sends $L(M_2)$ to 
\[
 \Big({n_1 \choose 2} +{n_2 \choose 2}  + n_1 l_1+n_2 l_2\Big) \cdot \nu' + \mbox{ multiple of } \nu. 
    \]
We may check directly that 
\[\Big({n_1 \choose 2} +{n_2 \choose 2}  + n_1 l_1+n_2 l_2\Big) \equiv 0 \pmod{3} \implies n_1 \equiv n_2 \equiv 0 \pmod{3}.\]
Therefore, there is no map $M_2 \to \HH P^\infty$ whose homotopy fibre is $3$-connected.
\end{exam}

\end{mysubsection}

\begin{mysubsection}{Possible stable homotopy type of the total space}
We note from \S \ref{evencase} that Propositions \ref{Prop_tau map}, \ref{Prop_sigma k achieved}, and Theorem \ref{Theorem_divisibility theorem for 3} are also valid when the intersection form is odd. We check that all stable homotopy types are achievable in the odd case if the rank $k$ of $H_4(M)$ is $\geq 7$. Applying the results from \S \ref{evencase}, it only remains to check that the different possibilities $\pmod{8}$ are achievable. We do this in the theorem below.

\begin{theorem}\label{oddcasethm}
    Suppose $\sigma(M_k)$ is odd. Then for $k \geq 5$ and $j\in \{0,2,4\}$, there exists $\psi \colon M_k \to \HH P^2$ such that $\lambda(\psi) \equiv j \pmod{8}$.
\end{theorem}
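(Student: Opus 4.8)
The plan is to reinterpret $\lambda(\psi) \pmod 8$ intrinsically in terms of the pair $(\langle-,-\rangle,\tau)$ and then to produce the required $\psi$ by exploiting the oddness of the form. First I would extract from Proposition \ref{Prop_tau j calculate} the following uniform description, valid in both Case 1 and Case 2 of Notation \ref{Nota_matrix form}: once $\psi$ is taken as the last basis vector, the elements whose $\tau$-values enter the gcd defining $\lambda(\psi)$ all lie in $\psi^\perp = \{\chi \in H^4(M_k) : \chi \cup \psi = 0\}$ and in fact form a basis of it (in Case 2 one checks $\psi_k - g_{k,k}\psi_{k-1} \in \psi^\perp$). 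Hence $\lambda(\psi)$ is precisely the content of $\tau|_{\psi^\perp}$, and modulo $8$ its $2$-primary part is $\gcd(\tau(\psi^\perp),8) \in \{1,2,4,8\}$ (with $8 \equiv 0$).

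The key structural input is the congruence $\tau(\chi) \equiv \langle\chi,\chi\rangle \pmod 2$, which follows from $\tau(\psi)=g_{k,k}-2l_k$ together with $\psi^2=g_{k,k}$ as in Proposition \ref{Prop_tau map}. Over $\F_2$ the diagonal of a unimodular form is represented by the characteristic (Wu) element $w$, so $\tau(\chi) \equiv \langle\chi,w\rangle \pmod 2$. Consequently $\tau|_{\psi^\perp} \equiv 0 \pmod 2$ --- equivalently $\lambda(\psi)$ is even, so its residue lies in $\{0,2,4\}$ --- if and only if $\psi^\perp$ is an even sublattice, i.e. if and only if $\psi$ is characteristic, $\psi \equiv w \pmod 2$. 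This explains why the even residues $\{0,2,4\}$ are exactly the ones attached to characteristic classes, and why $6$ cannot occur as a $2$-primary content (a $\gcd$ with $8$ is always a power of $2$). It reduces the problem to constructing a characteristic primitive $\psi$ for which the projection onto $\psi^\perp$ of the form-dual $\theta$ of $\tau$ (defined by $\langle\chi,\theta\rangle \equiv \tau(\chi)\pmod 8$, using unimodularity) has prescribed $2$-adic divisibility. Note that $\theta$ itself is characteristic and satisfies the mod-$8$ existence criterion automatically, since $\tau(\theta)=\langle\theta,\theta\rangle=\theta^2$; taking $\psi=\theta$ gives $\lambda(\psi)\equiv 0$.

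To realize $j=2$ and $j=4$ I would perturb $\psi$ away from $\theta$, using the classification of odd symmetric unimodular forms. By \cite[Chapter II, (4.3)]{MiHu73} one can split off an orthogonal $\langle\pm 1\rangle$ summand modulo $8$ and, iterating as in Proposition \ref{Prop_existence of classifying map odd case}, reduce to the standard diagonal model $\oplus\langle\pm 1\rangle$ (with at most one hyperbolic block). In this model $\theta$ and $w$ are explicit, and the bound $k\geq 5$ leaves enough free unit directions $e_i$ so that combinations $\sum n_i e_i$ adjust the projection of $\theta$ to $\psi^\perp$ to be divisible by $8$, exactly twice a unit, or exactly four times a unit, yielding $\lambda(\psi)\equiv 0,2,4 \pmod 8$ respectively, all while keeping $\psi$ characteristic, primitive, and satisfying $\psi^2\equiv\tau(\psi)\pmod 8$.

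Finally I would assemble the congruence modulo $24$ by the Chinese Remainder Theorem. The mod-$3$ part of the existence criterion of Proposition \ref{Prop_tau map}, together with the control of the odd ($3$-primary) part of $\lambda$ that pins an ambiguous $2$-primary residue to the exact value $j$ (for instance, forcing $\lambda_s=2$ rather than $6$), is supplied by Proposition \ref{Prop_sigma k achieved} and Theorem \ref{Theorem_divisibility theorem for 3}; these are precisely the steps demanding $k\geq 5$. The main obstacle is the simultaneous control in the third step: the existence criterion is a quadratic condition on $\psi$, whereas the target for $\lambda(\psi)$ is a divisibility condition on the projection of $\theta$ to $\psi^\perp$, and the two interact through the characteristic-vector constraint. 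The oddness of the intersection form is exactly what furnishes enough unit diagonal directions to meet both demands at once, and verifying that the resulting elementary congruences always admit a primitive solution for $k\geq 5$ is the technical heart of the argument.
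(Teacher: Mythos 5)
Your overall architecture coincides with the paper's: in both, the key step is to represent $\tau \pmod 8$ by its form-dual $\theta$ (the paper's $\psi$ with $\tau(\chi)=\langle\chi,\psi\rangle \pmod 8$, primitive because $\sigma(M_k)$ is odd), observe that $\tau(\theta)=\theta^2$ so the criterion of Proposition \ref{Prop_tau map} holds automatically at the prime $2$, and read off from Proposition \ref{Prop_tau j calculate} that $\lambda(\psi)$ is the content of $\tau|_{\psi^{\perp}}$ (your uniform description of Cases 1 and 2 is correct), which vanishes mod $8$ for $\psi=\theta$; the mod $3$ and mod $8$ requirements are then glued via Proposition \ref{Prop_sigma k achieved} exactly as you say. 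Your reformulation that $\lambda(\psi)$ is even precisely when $\psi$ is characteristic, via $\tau\equiv\langle-,w\rangle\pmod 2$, is correct and a nice conceptual gloss, though not needed for the statement. Where you diverge is the realization of $j=2,4$: you propose diagonalizing the odd form mod $8$ and solving congruences in the unit directions, and you yourself defer the verification as ``the technical heart'' --- as written, this is the one genuine gap. The paper sidesteps all of it with a short perturbation: having chosen a basis adapted to $\theta$ with $\psi_1\in\theta^{\perp}$, set $\psi(j)=\theta+j\psi_1$. Then $\tau(\psi(j))=\tau(\theta)$ since $\tau(\psi_1)=\langle\psi_1,\theta\rangle=0$, and $\psi(j)^2=\theta^2+j^2\psi_1^2\equiv\theta^2\pmod 8$ because $\psi_1^2\equiv\tau(\psi_1)\equiv 0\pmod 2$, so the existence criterion survives; on the other hand $\tau=\langle-,\psi(j)\rangle-j\langle-,\psi_1\rangle$ restricts on $\psi(j)^{\perp}$ to $-j\langle-,\psi_1\rangle$, and one checks with the formulas of Proposition \ref{Prop_tau j calculate} that $\lambda(\psi(j))\equiv j\pmod 8$. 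Substituting this perturbation for your diagonalization step closes the gap without any classification input and works uniformly in both cases of Notation \ref{Nota_matrix form}.
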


\begin{proof}
We work $\pmod{8}$, knowing that if $k\geq 5$, Proposition \ref{Prop_sigma k achieved} allows us to make a choice of $\psi$ so that $\lambda(\psi)$ is as required $\pmod{3}$. The proof is very similar to the proof of Theorem \ref{Theorem_divisibility theorem for 3}. If $\sigma(M_k)$ is odd, the linear map $\tau \colon (\Z/8)^k \to \Z/8$ is represented by some primitive class $\psi$ (that is, $\tau (\chi)= \langle \chi, \psi \rangle \pmod{8}$, and $\Z\{\psi\}$ is a summand of $H^4(M_k)$). In particular, $\psi^2=\tau (\psi)$. Now, we have two cases.

First let $\tau(\psi)$ be odd, i.e. $\psi^2$ is unit in modulo $8$. Then we can extend $\psi$ to a basis $\psi, \psi_1, \dots, \psi_{k-1}$ such that $\psi \cup \psi_i=0$ for $1\leq i \leq k-1$. 
We take the dual basis $\alpha_i$ corresponding to $\psi_i$ and $\alpha_k$ corresponding to $\psi$. Thus $\psi(\alpha_i)=0$ for $i=1, \dots, k-1$ and $\psi(\alpha_{k})=1$. Hence 
$$\tau_1 \equiv \dots \equiv \tau_{k-1} \equiv 0 \pmod{8}, \quad \text{and} \quad \tau_k \equiv g_{k,k} \pmod{8},$$ 
which implies $\lambda(\psi)= \gcd(\tau_1,\dots,\tau_{k-2},\tau_{k-1}) \equiv 0 \pmod{8}$ by Case (1) of Proposition \ref{Prop_tau j calculate}.
    
Now let $\psi^2$ be even. Extend $\psi$ to a basis $\psi, \psi_1, \dots, \psi_{k-1}$ such that $\psi_{k-1} \cup \psi=1$ and $\psi_i \cup \psi=0$ for $i=1 , \dots, k-2$. After taking the dual basis, with similar argument we have $$\tau_1 \equiv \dots \equiv \tau_{k-2}\equiv 0 \pmod{8}, \quad \tau_{k-1}\equiv 1 \pmod{8}, \quad \text{and} \quad \tau_k \equiv g_{k,k} \pmod{8}.$$
Hence $\lambda(\psi)= \gcd(\tau_1,\dots,\tau_{k-2},\tau_k-g_{k,k}\tau_{k-1})\equiv0\pmod{8}$ by Case (2) of Proposition \ref{Prop_tau j calculate}. This proves the result for $j=0$.

For $j=2, $ or $4$, we can use $\psi(j)= \psi+j \psi_1 $, and note that 
\[ \tau(\psi(j))= \tau(\psi), ~ \psi(j)^2 = \psi^2 + j^2 \psi_1^2 \equiv \psi^2 \pmod{8}.\]
The last equivalence comes from the fact that $\tau(\psi_1)z = \psi\cup \psi_1 \equiv 0 \pmod{8}$, and so $\psi_1^2$ is forced to be an even multiple of $z$. We may now compute using the formulas of Proposition \ref{Prop_tau j calculate} to conclude that $\lambda(\psi(j))\equiv j \pmod{8}$.
%Now we prove for $j=4$. Let $\psi_0= \psi+4 \psi_1$. Then $\tau(\psi_0)=\tau(\psi)$ and $\psi_0^2=\psi^2$. Being an odd intersection form, since there exists $\psi_i$ such that $\psi_1 \cup \psi_i=\odd$ for some $i \leq k-2$, we have $\psi_0 \cup \psi_i=4 \times \odd$.
%Then we transform $\psi_i \mapsto \psi_i -4 \psi_{k-1}$. Thus $\tau (\psi_i)=4$ while others remain the same. Hence, $\lambda(\psi) \equiv 4  \pmod{8}$.
%
%Lastly we prove for $j=2$. Let $\psi_0= \psi+2 \psi_1$. Since $\tau(\psi_1)=0$, $\psi_1^2$ is even. Then $\tau(\psi_0)=\tau(\psi)$ and $\psi_0^2=\psi^2$. 
%We transform $\psi_i \mapsto \psi_i -4 \psi_{k-1}$ when $\psi_1 \cup \psi_i=\odd$. This gives $\lambda(\psi) \equiv 2 \pmod{8}$.
\end{proof}
    
As in the even case, when the rank is high enough we have a systematic idea of the possibilities of the total space. However, in the low rank cases ($k\leq 6$) the results are not systematic, and may depend on individual cases rather than only on $\sigma(M_k), k$, and the intersection form.

\end{mysubsection}


\begin{thebibliography}{10}

\bibitem{Ada56}
{\sc J.~F. Adams}, {\em On the cobar construction}, Proc. Nat. Acad. Sci.
  U.S.A., 42 (1956), pp.~409--412.

\bibitem{Ada66}
\leavevmode\vrule height 2pt depth -1.6pt width 23pt, {\em On the groups
  {$J(X)$}. {IV}}, Topology, 5 (1966), pp.~21--71.

\bibitem{Bar65}
{\sc D.~Barden}, {\em Simply connected five-manifolds}, Ann. of Math. (2), 82
  (1965), pp.~365--385.

\bibitem{BaBa15}
{\sc S.~Basu and S.~Basu}, {\em Homotopy groups and periodic geodesics of
  closed 4-manifolds}, Internat. J. Math., 26 (2015), pp.~1550059, 34.

\bibitem{BaBa18}
\leavevmode\vrule height 2pt depth -1.6pt width 23pt, {\em Homotopy groups of
  highly connected manifolds}, Adv. Math., 337 (2018), pp.~363--416.

\bibitem{BaBa19}
\leavevmode\vrule height 2pt depth -1.6pt width 23pt, {\em Homotopy groups of
  certain highly connected manifolds via loop space homology}, Osaka J. Math.,
  56 (2019), pp.~417--430.

\bibitem{BaGh2023}
{\sc S.~Basu and A.~K. Ghosh}, {\em Sphere fibrations over highly connected
  manifolds},  (2023).

\bibitem{DuLi05}
{\sc H.~Duan and C.~Liang}, {\em Circle bundles over 4-manifolds}, Arch. Math.
  (Basel), 85 (2005), pp.~278--282.

\bibitem{Gib68}
{\sc P.~J. Giblin}, {\em Circle bundles over a complex quadric}, J. London
  Math. Soc., 43 (1968), pp.~323--324.

\bibitem{Hat01}
{\sc A.~Hatcher}, {\em Algebraic topology}, Cambridge University Press,
  Cambridge, 2002.

\bibitem{MiHu73}
{\sc J.~Milnor and D.~Husemoller}, {\em Symmetric bilinear forms}, vol.~Band 73
  of Ergebnisse der Mathematik und ihrer Grenzgebiete [Results in Mathematics
  and Related Areas], Springer-Verlag, New York-Heidelberg, 1973.

\bibitem{Sma62}
{\sc S.~Smale}, {\em On the structure of {$5$}-manifolds}, Ann. of Math. (2),
  75 (1962), pp.~38--46.

\end{thebibliography}
\end{document}